\documentclass[a4paper,11 pt,reqno,twoside]{amsart}

\usepackage{a4wide}
\usepackage{amssymb}
\usepackage{latexsym}
\usepackage{amsmath}

\usepackage{geometry}
\geometry{margin=1.2in}


\usepackage[UKenglish]{babel}
\usepackage{color}
\usepackage[bookmarks=true,
            bookmarksnumbered=true,
            colorlinks=false,
            pdfstartview=FitH]{hyperref}
 \hypersetup{pdftitle={Quantum stochastic Lie--Trotter product formula II},
 pdfauthor={J. Martin Lindsay}}
 \usepackage{comment} 

\theoremstyle{plain}
\newtheorem{propn}{Proposition}[section]
\newtheorem{thm}[propn]{Theorem}
\newtheorem{lemma}[propn]{Lemma}
\newtheorem{cor}[propn]{Corollary}
\newtheorem{conjecture}[propn]{Conjecture}

\theoremstyle{definition}
\newtheorem{defn}[propn]{Definition}
\newtheorem{eg}[propn]{Example}

\theoremstyle{remark}
\newtheorem*{rem}{Remark}
\newtheorem*{rems}{Remarks}




 \newcommand{\Xh}{X^{\la h \ra}}

 \newcommand{\ud}{\,\mathrm{d}}
 
\newcommand{\QSChol}{\mathbb{QSC}_{\hol}}
  \newcommand{\Shol}{\mathcal{S}_{\hol}}

  \newcommand{\Upsilonen}{\Upsilon^n}
 \newcommand{\Upsilonone}{\Upsilon^1}
 \newcommand{\Upsilontwo}{\Upsilon^2}
 \newcommand{\FD}{F^\Delta}
 \newcommand{\components}{(\gamma, L, \wt{L}, C)}
 \newcommand{\componentsi}{(\gamma_i, L_i, \wt{L}_i, C_i)}
 \newcommand{\QF}{\mathcal{Q}(\hil \op \Hil)}
  \newcommand{\Ltwoloc}{L^2_{\loc}}
 \newcommand{\Expect}{\mathbb{E}}
 \newcommand{\comp}{\lhd}

 \newcommand{\Qblg}{\mathfrak{b}(\khat\ot\init)}
 \newcommand{\Qzlg}{\mathfrak{z}(\khat\ot\init)}
 \newcommand{\Qclg}{\mathfrak{c}(\khat\ot\init)}
 \newcommand{\Qqclg}{\mathfrak{qc}(\khat\ot\init)}
 \newcommand{\Qqclgbeta}{\mathfrak{qc}_\beta(\khat\ot\init)}
 \newcommand{\Qqclgbetai}{\mathfrak{qc}_{\beta_i}(\khat\ot\init)}
 \newcommand{\Qqclgbetaone}{\mathfrak{qc}_{\beta_1}(\khat\ot\init)}
 \newcommand{\Qqclgbetatwo}{\mathfrak{qc}_{\beta_2}(\khat\ot\init)}
 \newcommand{\Qilg}{\mathfrak{i}(\khat\ot\init)}
 \newcommand{\Qulg}{\mathfrak{u}(\khat\ot\init)}
 
 \newcommand{\Qpglg}{\mathfrak{pg}(\khat\ot\init)}
 \newcommand{\Qglg}{\mathfrak{g}(\khat\ot\init)}
  \newcommand{\Qwnglg}{\mathfrak{wn}\text{-}\mathfrak{g}(\khat\ot\init)}
 
 \newcommand{\Qpplg}{\mathfrak{pp}(\khat\ot\init)}

 \newcommand{\QpglgSolo}{\mathfrak{pg}}

 \newcommand{\QwnglgSolo}{\mathfrak{wn}\text{-}\mathfrak{g}}

 \newcommand{\QqclgC}{\mathfrak{qc}(\khat)}

 \newcommand{\blg}{\mathfrak{b}(\hil\oplus\Hil)}
 \newcommand{\blgone}{\mathfrak{b}(\hil\oplus\Hil_1)}
 \newcommand{\blgtwo}{\mathfrak{b}(\hil\oplus\Hil_2)}
 \newcommand{\zlg}{\mathfrak{z}(\hil\oplus\Hil)}
 \newcommand{\clg}{\mathfrak{c}(\hil\oplus\Hil)}
  \newcommand{\clgone}{\mathfrak{c}(\hil\oplus\Hil_1)}
 \newcommand{\qclg}{\mathfrak{qc}(\hil\oplus\Hil)}
  
  \newcommand{\qclgonebeta}{\mathfrak{qc}_\beta(\hil\oplus\Hil_1)}
 \newcommand{\qclgbeta}{\mathfrak{qc}_\beta(\hil\oplus\Hil)}

 \newcommand{\ilg}{\mathfrak{i}(\hil\oplus\Hil)}
 \newcommand{\ulg}{\mathfrak{u}(\hil\oplus\Hil)}
 \newcommand{\ulgprime}{\mathfrak{u}(\hil\oplus\Hil')}



\newcommand{\Domain}{\mathcal{D}}

\newcommand{\Expectation}{\mathbb{E}}

 \newcommand{\Vone}{V^{(1)}}
  \newcommand{\Vtwo}{V^{(2)}}

 \newcommand{\Hone}{H^{(1)}}
  \newcommand{\Htwo}{H^{(2)}}
   \newcommand{\Hthree}{H^{(3)}}

 \newcommand{\Yone}{Y^{(1)}}
  \newcommand{\Ytwo}{Y^{(2)}}
   \newcommand{\Ythree}{Y^{(3)}}
    \newcommand{\Yi}{Y^{(i)}}
 
 \newcommand{\Zone}{Z^{(1)}}
  \newcommand{\Ztwo}{Z^{(2)}}
   \newcommand{\Zthree}{Z^{(3)}}
    \newcommand{\Zi}{Z^{(i)}}

\newcommand{\Xfourhol}{\mathfrak{X}^4_{\hol}}
 \newcommand{\Xhol}{\mathfrak{X}_{\hol}}

 \newcommand{\dualco}{V^{\sharp}}
 \newcommand{\reversedco}{V^{\reversed}}
 \newcommand{\dual}{\sharp}


\newcommand{\otul}{\underline{\ot}\,}
\newcommand{\otol}{\overline{\ot}\,}






\newcommand{\ve}{\varepsilon}

\newcommand{\vp}{\varpi}

\newcommand{\Hil}{\mathsf{H}}
\newcommand{\hil}{\mathsf{h}}
\newcommand{\Kil}{\mathsf{K}}

\newcommand{\Al}{\mathsf{A}}

\newcommand{\init}{\mathfrak{h}}
\newcommand{\noise}{\mathsf{k}}
\newcommand{\khat}{\wh{\noise}}
\newcommand{\Fock}{\mathcal{F}}
\newcommand{\Exps}{\mathcal{E}}

\newcommand{\Step}{\mathbb{S}}

\newcommand{\Real}{\mathbb{R}}
\newcommand{\Rplus}{\Real_+}
\newcommand{\Comp}{\mathbb{C}}
\newcommand{\Nat}{\mathbb{N}}

\newcommand{\ip}[2]{\langle #1, #2 \rangle}

\newcommand{\norm}[1]{\lVert #1 \rVert}

\newcommand{\bra}[1]{\langle #1 |}
\newcommand{\ket}[1]{| #1 \rangle}




\newcommand{\sa}{{\text{\tu{sa}}}}

\newcommand{\wh}{\widehat}
\newcommand{\wt}{\widetilde}
\newcommand{\ol}{\overline}
\newcommand{\ot}{\otimes}

\newcommand{\op}{\oplus}

\newcommand{\les}{\leqslant}
\newcommand{\ges}{\geqslant}


\newcommand{\tu}{\textup}

\DeclareMathOperator{\reversed}{r} 
 \DeclareMathOperator{\Gauss}{g} 
 \DeclareMathOperator{\wnGauss}{wn-g} 
 \DeclareMathOperator{\mGauss}{mg} 
 \DeclareMathOperator{\Pres}{p} 

 \DeclareMathOperator{\loc}{loc}

 \DeclareMathOperator{\hol}{hol}
  
\DeclareMathOperator{\Dom}{Dom}
\DeclareMathOperator{\Ran}{Ran}
\DeclareMathOperator{\Lin}{Lin}

\DeclareMathOperator{\Ker}{Ker}
\DeclareMathOperator{\id}{id}
\DeclareMathOperator{\re}{Re}
\DeclareMathOperator{\im}{Im}

\DeclareMathOperator{\LHS}{LHS}
\DeclareMathOperator{\RHS}{RHS}

\newcommand{\Linbar}{\ol{\Lin}}
\newenvironment{alist}
{

\begin{enumerate}}
{\end{enumerate}}

\newenvironment{rlist}
{

\begin{enumerate}}
{\end{enumerate}}


\newcommand{\la}{\langle}
\newcommand{\ra}{\rangle}

\numberwithin{equation}{section} \pagestyle{headings}

\begin{document}

\title[QS Lie--Trotter product formula]
{Quantum stochastic \\
Lie--Trotter product formula II}
\author[Martin Lindsay]{J.\ Martin Lindsay}
\address{Department of Mathematics \& Statistics \\
Lancaster University \\ Lancaster LA1 4YF \\ UK}
\email{j.m.lindsay@lancs.ac.uk}

 \subjclass[2000]
 {46L53 
 (primary);
  81S25,  
  47D06, 
 46N50  
 (secondary).}

 \keywords{Lie--Trotter product formula,
quantum stochastic cocycle,
one-parameter semigroup,
series product, concatenation product,
quantum It\^o algebra,
quantum stochastic analysis}

\begin{abstract}
 A natural counterpart to the Lie--Trotter
 product formula for 
 norm-contin\-uous 
 one-parameter semigroups is
 proved, for the class of
quasicontractive quantum stochastic operator cocycles
whose expectation semigroup is norm continuous.
 Compared to previous such results, the assumption of a strong form
 of  independence of the constituent cocycles is
 overcome.
 The analysis is facilitated by the development of some
 quantum It\^o algebra.
It is also shown how
the maximal Gaussian component of a  quantum stochastic generator may be extracted
--- leading to a canonical decomposition of such generators,
and
 the connection to perturbation theory is described.
Finally,
the quantum It\^o algebra is extended to quadratic form generators,
and
a conjecture is formulated for the extension of the product formula
 to holomorphic quantum stochastic cocycles.
\end{abstract}

\maketitle

 \tableofcontents

 \section*{Introduction}
 \label{section: introduction}

 The Lie product formula in a unital Banach algebra states that
 \[
 ( e^{a / n}  e^{b / n} )^n \to  e^{a+b}
 \text{ as } n \to \infty.
 \]
 Trotter extended this to $C_0$-semigroups on a Banach
 space where it holds under compatibility assumptions on the
 generators, convergence being in the strong operator sense
 (\cite{Tr2}, see \emph{e.g.}~\cite{Davies}).
 It has been further refined,
 notably by Chernoff (\cite{Chernoff}) and Kato (\cite{Kato T}).
 These product formulae are widely used in mathematical physics
 and probability theory -- for example in establishing positivity
 preservation of semigroups, and they have an intimate connection to Feynman--Kac
 formulae (see \emph{e.g.}~\cite{ReedSimon}).
 Given that quantum stochastic
 cocycles may be analysed from their associated semigroups
 (\cite{LiW}), it is natural to seek product formulae in this
 context.
 Further motivation comes from the fact that
 such cocycles are quantum counterparts to stochastic
 semigroups in the sense of Skorohod (\cite{Skorohod}).
 Product formulae have been obtained in a variety of quantum stochastic
 settings (\cite{PaS}, [LS$_{1,3}$], \cite{DLT}, \cite{DGS}).
 The earliest of these dates from before the advent of quantum
 stochastic calculus (\cite{HuP}).
 In all of these works the constituent cocycles enjoy a strong
 independence property, namely their respective noise dimension
 spaces are mutually orthogonal.

 In this paper a Lie--Trotter
 product formula is established for quasicontractive
elementary (\emph{i.e.} Markov-regular) quantum stochastic operator cocycles,
 with no independence assumption on the driving quantum noise.
 It is a direct generalisation of the product formula proved in~\cite{LS L-T},
 and is proved by quite different means.
 Properties of the composition law on the set of
 quantum stochastic generators that is realised by
 the stochastic product formula established here are also elucidated.
Known in the setting of quantum control theory as the series product (\cite{GoughJames}),
 it is more commonly associated with the \emph{perturbation} of quantum stochastic cocycles (\cite{EvansHudson}).
 The composition of stochastic generators also corresponds to
 the operator product (\emph{i.e.} standard composition) of the generators of the quantum random walks
 whose scaled embeddings approximate the constituent cocycles (\cite{BGL}).
Analysis of this composition leads to left and right series decompositions of
a quantum stochastic generator.
A  decomposition for such generators in terms of
the so-called concatenation product is also given;
this yields the generator's maximal Gaussian part.

 It is conjectured here that,
 as in the case of orthogonal noises (\cite{LS T-K}),
 the more general Lie--Trotter product formula given in this paper
 has an extension to the class of holomorphic
 quantum stochastic cocycles (\cite{LS holomorphic}).
 By contrast, without orthogonality of noise dimension spaces
 there seems to be no sensible formulation of a Lie--Trotter product formula for quantum stochastic \emph{mapping} cocycles.

 The plan of the paper is as follows.
 In Section~\ref{section: quantum Ito algebra}
some quantum It\^o algebra is developed,
 for studying
 the series product
 on the class of stochastic generators considered here.
 In Section~\ref{section: quantum stochastics}
the relevant
 quantum stochastic analysis is recalled.
 The quantum stochastic Lie--Trotter product formula is proved in
 Section~\ref{section: product formula}.
In Section~\ref{section: gaussian}
the maximal Gaussian component of a quantum stochastic generator is extracted
by means of the concatenation product.
 In the short Section~\ref{section: perturbation},
 the connection to perturbation theory is described,
 and in Section~\ref{section: holomorphic cocycles}
 the quantum It\^o algebra is extended to quadratic form generators and
 a conjecture for quasicontractive holomorphic quantum stochastic cocycles
 is formulated.

\emph{Notation}.
For a vector-valued function $f: \Rplus \to V$
and subinterval $J$ of $\Rplus$,
$f_{\!J}$ denotes the function $\Rplus \to V$
which agrees with $f$ on $J$ and vanishes elsewhere.
For Hilbert spaces $\hil$ and $\hil'$,
$B(\hil; \hil')$ denotes the space of bounded operators from $\hil$ to $\hil'$
and $B(\hil; \hil')_1$ denotes
its closed unit ball, abbreviated to $B(\hil)$ and $B(\hil)_1$ respectively
when $\hil' = \hil$.
For an operator $T \in B(\hil)$,
its real and imaginary parts
are denoted $\re T$ and $\im T$ respectively,
thus $T$ is dissipative if and only if $\re T \les 0$.
The selfadjoint part of a subset $A$ of an involutive space is denoted $A_\sa$.
The predual of $B(\hil)$,
that is the space of ultraweakly continuous linear functionals on $B(\hil)$,
is denoted $B(\hil)_*$.
Algebraic and ultraweak tensor products are denoted by
$\otul$ and $\otol$ respectively and, for vectors $\zeta, \eta \in\hil$,
$\omega_{\zeta, \eta} \in B(\hil)_*$ denotes the functional given by $T \mapsto \ip{\zeta}{T \eta}$.
The symbol $\subset\subset$ is used to denote finite subset.

 \section{Quantum It\^o algebra}
  \label{section: quantum Ito algebra}

 For this section take Hilbert spaces $\hil$ and $\Hil$.
 The block matrix decomposition
 enjoyed by operators in $B(\hil \op \Hil)$
 is frequently appealed to below.
 With respect to the distinguished orthogonal projection
 \[
 \Delta := P_{\{0_\hil\} \op \Hil} =
 \begin{bmatrix} 0_\hil & 0_{\Hil; \hil} \\ 0_{\hil; \Hil} & I_{\Hil} \end{bmatrix}
 \in B(\hil \op \Hil),
 \]
  the composition law on $B(\hil \op \Hil)$ given by
 \[
 F_1 \comp F_2 := F_1 + F_1 \Delta F_2 + F_2,
 \]
with useful alternative asymmetric expressions
\[
F_1 ( I + \Delta F_2 ) + F_2
\ \text{ and } \
F_1 + ( I + F_1 \Delta ) F_2,
\]
 has both
 \begin{subequations}
 \begin{equation}
 \label{eqn: triplea}
 F_1 +F_2 + F_3 +
\big( F_1 \Delta F_2 + F_1 \Delta F_3 + F_2 \Delta F_3  \big) +
 F_1 \Delta F_2 \Delta F_3,
\end{equation}
and
 \begin{equation}
 \label{eqn: tripleb}
F_1 \comp F_3 +
 (I + F_1 \Delta) \, F_2 \, (\Delta F_3 + I),
\end{equation}
\end{subequations}
 as common expression for
 $F_1 \comp (F_2 \comp F_3)$ and $(F_1 \comp F_2) \comp F_3$.
Moreover the composition $\comp$ has
 $0_{\hil \op \Hil}$ as identity element,
 and the operator adjoint as involution
 since
 $$
(F_1 \comp F_2)^* = F_2^* \comp F_1^*.
$$
 The notation $\comp$ is taken from the quantum control theory literature, where
 the composition is called the series product.

 Let $\blg := (B(\hil \op \Hil), \comp)$ denote
 the resulting *-monoid (i.e.~involutive semigroup-with-identity),
 let $\beta \in \Real$,
 and consider the following subsets of $\blg$:
\begin{align*}
 \zlg
 &:=
 \big\{
 K \oplus 0_\Hil:
 K \in B(\hil)
 \big\}
 = \Delta^\perp\, \blg\, \Delta^\perp,
 \\
 \clg
 &:=
 \big\{ F \in \blg: F^* \comp F \les 0 \},
 \\
 \qclgbeta
 &:=
 \clg + \beta \Delta^\perp,
 \\
 \qclg
 &:=
 \clg + \Rplus \Delta^\perp,
 \\
 \ilg
 &:=
 \big\{ F \in \blg: F^* \comp F = 0 \},
 \\
 \ilg^*
 &:=
 \big\{ F^*:  F \in \ilg \},
 \\
 \ulg
 &:=
 \ilg \cap \ilg^*,
\end{align*}
 and for $F \in \qclg$ set
 \begin{equation*}
 \beta_0(F) :=
 \inf \{ \beta \in \Real: F - \beta \Delta^\perp \in \clg \}.
 \end{equation*}
 These classes are relevant to the characterisation of
 the stochastic generators of
quantum stochastic cocycles which are respectively
contractive, quasicontractive
 (with exponential growth bound $\beta_0(F)$),
 isometric, coisometric and unitary
(see Theorem~\ref{thm: cocycle = qsde} below).
Note that,
 for $F \in \qclg$,

 \begin{equation*}
 F \in \qclgbeta
 \text{ if and only if }
 \beta \ges \beta_0(F).
 \end{equation*}

 \begin{rems}
 Further classes are relevant to the characterisation of
quantum stochastic cocy\-cles which are
 nonnegative, selfadjoint, partially isometric or projection-valued
 (\cite{Wills}).
The characterisation of the generators of
 `pure-noise' (or `local') nonnegative contraction cocycles
 (for which $\hil = \Comp$) plays an important role in
the identification of the
 minimal dilation of a quantum dynamical semigroup
 (\cite{Bhat}).
\end{rems}

 Let $F_1$, $F_2$ and $F$ in $\blg$ have respective block matrix forms
 $\left[ \begin{smallmatrix}
  K_1 & M_1 \\ L_1 & Q_1 - I
 \end{smallmatrix} \right]$,
 $\left[ \begin{smallmatrix}
  K_2 & M_2 \\ L_2 & Q_2 - I
 \end{smallmatrix} \right]$
 and
 $\left[ \begin{smallmatrix}
  K & M \\ L & Q - I
 \end{smallmatrix} \right]$.
 Then
 \begin{align}
 \nonumber
 F_1 \comp F_2  &=
 \begin{bmatrix}
 K_1 + K_2 + M_1 L_2 & M_1 Q_2 + M_2 \\
 L_1 + Q_1 L_2 & Q_1 Q_2 - I
  \end{bmatrix},
 \text{ and }
 \\
  \label{eqn: F star F}
 F^* \comp F  &=
 \begin{bmatrix}
 K^* + K + L^* L & L^* Q + M \\
 M^* + Q^* L & Q^* Q - I
 \end{bmatrix},
 \end{align}
 moreover,
for
 $Z \in \zlg$, $\beta \in \Real$ and $X \in \blg$,
 \begin{align*}
 \nonumber
 (F + Z)^* \comp (F + Z)
 &=
 Z^* +  \, F^* \comp F \, + Z
 \\
 (F - \beta \Delta^\perp)^* \comp (F - \beta \Delta^\perp)
 &=
 F^* \comp F - 2 \beta \Delta^\perp,
 \text{ and }
 \\
 \nonumber
 F^* \comp X \comp F
 &= F^* \comp F +
 (I + \Delta F)^* X (I + \Delta F).
 \end{align*}
 These identities imply the following relations:
 \begin{equation*}
 (G\comp F)^* \comp (G \comp F)
 \left\{
 \begin{array}{ll}
 =
 F^* \comp F + G^* + G
 & \text{ if }
 G \in \zlg,
 \\
 \les
 F^* \comp F + 2 \beta \Delta^\perp
 & \text{ if }
 G \in \qclgbeta,
 \\
 =
 F^* \comp F
 & \text{ if }
 G \in \ilg.
 \end{array} \right.
 \end{equation*}
 The basic algebraic properties of these subsets of $\blg$
 are collected in the following proposition;
 using the above observations, their proof is straightforward.
 Recall that an operator $T \in B(\hil)$ is \emph{dissipative} if it satisfies
 $\re T \les 0$, that is
 \[
 \re \ip{u}{Tu} \les 0
 \qquad
 (u \in \hil).
 \]
\begin{propn}
 \label{propn: classes}
 In the *-monoid $\blg$ the following hold.
 \begin{alist}
 \item
 Its group of invertible elements is given by
 \[
 \blg^\times =
 \big\{
 \left[ \begin{smallmatrix}
  * & * \\ * & Q
 \end{smallmatrix} \right]
- \Delta
 \in \blg:
 Q \in B( \Hil )^\times
 \big\};
 \]
the identity element being
$0 = \left[ \begin{smallmatrix}
  0 & 0 \\  0 & I
 \end{smallmatrix} \right] - \Delta$, and
 the inverse of
 $\left[ \begin{smallmatrix}
  K & M \\ L & Q
 \end{smallmatrix} \right] - \Delta$ being
 \[
 \begin{bmatrix}
 M Q^{-1} L - K & -M Q^{-1} \\
 - Q^{-1} L & Q^{-1}
 \end{bmatrix}
 - \Delta
 =
 \begin{bmatrix}
 -M  \\
  I
 \end{bmatrix}
 Q^{-1}
 \begin{bmatrix}
 -L &
  I
 \end{bmatrix}
  -
 \begin{bmatrix}
 K  &  0 \\
  0  &   I
 \end{bmatrix}.
 \]
 \item
 Its centre is $ \zlg$.
 \item
Denoting the class of dissipative operators on $\hil$ by $D(\hil)$,
 \begin{align*}
 &
\qclg = \clg + \zlg,
 \text{ and }
 \\
 &
  \clg \cap \zlg =
 \{ T \op 0_\Hil: \, T \in D(\hil) \}.
 \end{align*}
 \item
 $\qclg$, $\clg$, $\ilg$ and $\ulg$
 are submonoids\tu{;}
 their groups of invertible elements are given by
 \begin{align*}
 &\ilg^\times =
 \clg^\times = \ulg, \text{ and }
 \\
 &\qclg^\times = \ulg + \zlg.
 \end{align*}
 Moreover,
 \[
  \ilg \cap \zlg = \ulg \cap \zlg = i\, \zlg_{\sa}.
 \]
 \end{alist}
\end{propn}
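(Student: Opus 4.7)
The proof leans on the block-matrix formula (\ref{eqn: F star F}) and the algebraic identities for $\comp$ already developed in the excerpt, together with one preliminary observation: the map $\Psi : \blg \To B(\hil \op \Hil)$ defined by $\Psi(F) := I + F \Delta$ is a homomorphism from $(\blg, \comp)$ to $(B(\hil \op \Hil), \cdot)$, since the asymmetric expansion $F_1 \comp F_2 = F_1 (I + \Delta F_2) + F_2$ gives $\Psi(F_1 \comp F_2) = \Psi(F_1) \Psi(F_2)$. In block form $\Psi(F) = \left[\begin{smallmatrix} I & M \\ 0 & Q \end{smallmatrix}\right]$, so $F \in \blg^\times$ forces $Q$ invertible; conversely, when $Q$ is invertible, direct block-matrix computation of $F \comp \tilde F$ and $\tilde F \comp F$, with $\tilde F$ the claimed inverse, establishes (a), and the factored form is a rewriting.

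For (b), the identity $F_1 \comp F_2 - F_2 \comp F_1 = F_1 \Delta F_2 - F_2 \Delta F_1$ reduces centrality of $F$ to $F \Delta F' = F' \Delta F$ for all $F' \in \blg$; testing on rank-one operators $\dyad{v}{w}$ forces $F \Delta = 0 = \Delta F$, so $F \in \zlg$, and the converse is immediate. For (c), the excerpt's identity $(F + Z)^* \comp (F + Z) = Z^* + F^* \comp F + Z$ shows that $C + (K \op 0) \in \qclgbeta$ for $\beta = \tfrac{1}{2} \| K + K^* \|$, giving $\clg + \zlg \subseteq \qclg$; the reverse inclusion is trivial, and the description of $\clg \cap \zlg$ follows from $F^* \comp F = (K + K^*) \op 0$ for $F = K \op 0$.

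For (d), closure under $\comp$ of each of $\qclg$, $\clg$, $\ilg$, $\ulg$ is immediate from the three listed (in)equalities for $(G \comp F)^* \comp (G \comp F)$, and $\ilg \cap \zlg = \ulg \cap \zlg = i \zlg_\sa$ reduces to $K + K^* = 0$. The pivotal step is $\clg^\times = \ulg$: given $F \in \clg^\times$, the identity $F^* \comp X \comp F = F^* \comp F + (I + \Delta F)^* X (I + \Delta F)$ with $X = (F^{-1})^* \comp F^{-1}$ and the relation $F^{-1} \comp F = 0$ give
\[
0 = F^* \comp F + (I + \Delta F)^* \bigl[ (F^{-1})^* \comp F^{-1} \bigr] (I + \Delta F),
\]
a sum of two nonpositive operators, so each vanishes, forcing $F \in \ilg$; the symmetric argument yields $F^{-1} \in \ilg$, after which the explicit inverse formula from (a) simplifies to $F^{-1} = F^*$, placing $F$ in $\ulg$. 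The reverse inclusion and $\ilg^\times = \ulg$ are immediate. For $\qclg^\times = \ulg + \zlg$, centrality of $\zlg$ together with $\zlg^\times = \zlg$ (inverse $-Z$) gives $\supseteq$ via $F = U \comp Z$ with inverse $U^{-1} - Z$; conversely, extracting the $(2,2)$-block from $F^* \comp F \les 2 \beta \Delta^\perp$ and from $(F^{-1})^* \comp F^{-1} \les 2 \beta' \Delta^\perp$ gives $Q^* Q \les I \les Q Q^*$, forcing $Q$ unitary, and the decomposition $F = C + Z_0$ from (c) then has $Q_C$ unitary; a $2 \times 2$ positivity argument on $-C^* \comp C$, whose $(2,2)$-block vanishes, forces $M_C = -L_C^* Q_C$, and transferring the dissipative Hermitian part of $K_C + \tfrac{1}{2} L_C^* L_C$ into $Z_0$ upgrades $C$ to an element of $\ulg$.

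I expect the main obstacle to be the identification $\clg^\times = \ulg$ together with the $\qclg^\times = \ulg + \zlg$ characterisation, because these require promoting one-sided inequalities to equalities via the algebraic identity above and then carefully bootstrapping unitarity of $Q$ while reallocating the dissipative part between $C$ and $Z_0$.
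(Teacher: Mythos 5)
Your proof is correct, and the paper itself deliberately omits a proof of this proposition, merely remarking that it is ``straightforward'' from the preceding identities (the block form of $F^*\comp F$, the three $\comp$-identities involving $Z\in\zlg$, $\beta\Delta^\perp$ and $F^*\comp X\comp F$, and the three consequences for $(G\comp F)^*\comp(G\comp F)$). You use exactly those ingredients where they are intended: the third identity with $X = (F^{-1})^*\comp F^{-1}$ is indeed the key to $\clg^\times = \ulg$, and the invertibility of $I+\Delta F = \left[\begin{smallmatrix} I & 0 \\ L & Q \end{smallmatrix}\right]$ (a consequence of part (a)) is what lets you conclude $(F^{-1})^*\comp F^{-1}=0$ from the vanishing of the congruence. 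Your two small additions --- the monoid homomorphism $\Psi(F)=I+F\Delta$, whose block form $\left[\begin{smallmatrix} I & M \\ 0 & Q\end{smallmatrix}\right]$ immediately yields necessity of $Q\in B(\Hil)^\times$ in (a), and the rank-one test for centrality in (b) --- are clean ways to handle the parts not directly covered by the listed identities. The only place where you compress a bit is the last step of $\qclg^\times=\ulg+\zlg$: once $Q$ is shown unitary and $M_C=-L_C^*Q_C$, the resulting $C'$ (after moving $\re K_C+\tfrac12 L_C^*L_C$ into $\zlg$) is in $\ilg$ by Theorem~\ref{thm: LW, GLSW}(b), and one still needs to observe that $\ilg$ together with $Q$ unitary forces membership of $\ulg$; this follows from the block computation of $C'\comp C'^*$ with $Q_CQ_C^*=I$, or equivalently from the fact that an invertible isometry is unitary combined with $\ilg^\times=\ulg$. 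This is a one-line addition, not a gap.
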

\begin{rem}
Clearly $\ulg$ is also closed under taking adjoints,
 and so is a sub-\!*-monoid of $\blg$.
\end{rem}

 In fact $\qclg$ and $\clg$ are sub-\!*-monoids too;
 this is not immediately obvious.
 It follows from Part (e) of Theorem~\ref{thm: LW, GLSW} below,
 but there are now also direct proofs.
 The one given below is based on
 an elegant argument of Wills,
 arising as a biproduct of his analysis of partially isometric quantum stochastic cocycles
 (\cite{Wills}).

  \begin{propn}
  \label{propn: wills id}
Let $F \in \blg$ and $T \in \zlg_\sa$.
Then
\begin{equation}
\label{eqn: wills id}
( \Delta F + I )^* ( F \comp F^* ) ( \Delta F + I )
=
F^* \comp F + ( F^* \comp F ) \, \Delta \, ( F^* \comp F ),
\end{equation}
and
$$
F^* \comp F \les T
\text{ if and only if }
F \comp F^* \les T.
$$
\end{propn}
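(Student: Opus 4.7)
The plan is to derive \eqref{eqn: wills id} first, and then to extract the equivalence from it by a short conjugation argument.

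For the identity, the key trick is to evaluate the associative fourfold composition $F^* \comp F \comp F^* \comp F$ in two different ways. Set $A := F^* \comp F$. The bracketing $(F^* \comp F) \comp (F^* \comp F) = A \comp A$, combined with the direct formula $F_1 \comp F_2 = F_1 + F_1 \Delta F_2 + F_2$, yields $2A + A \Delta A$. Alternatively, bracketing as $F^* \comp (F \comp F^*) \comp F$ and invoking the asymmetric triple expression \eqref{eqn: tripleb} with $F_1 = F^*$, $F_2 = F \comp F^*$ and $F_3 = F$, gives
\[
F^* \comp F + (I + F^* \Delta)(F \comp F^*)(\Delta F + I).
\]
Equating these two expressions for $A \comp A$, cancelling a copy of $A$, and noting that $(I + F^* \Delta)^* = \Delta F + I$ (since $\Delta$ is selfadjoint) delivers \eqref{eqn: wills id}.

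For the equivalence, I would apply \eqref{eqn: wills id} with $F$ replaced by $F^*$ to obtain
\[
(\Delta F^* + I)^*(F^* \comp F)(\Delta F^* + I) = F \comp F^* + (F \comp F^*)\Delta(F \comp F^*).
\]
Writing $T = K \op 0_\Hil$ with $K = K^*$ gives $\Delta T = T \Delta = 0$, and a one-line calculation then shows that conjugation by $\Delta F^* + I$ leaves $T$ fixed. The operator $B := F \comp F^*$ is selfadjoint since $(F_1 \comp F_2)^* = F_2^* \comp F_1^*$, and hence $B\Delta B = (B\Delta)(B\Delta)^* \ges 0$. Consequently, if $F^* \comp F \les T$ then monotonicity of conjugation combined with the displayed identity yields $F \comp F^* + B\Delta B \les T$, and discarding the nonnegative term $B \Delta B$ gives $F \comp F^* \les T$. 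The reverse implication follows by interchanging the roles of $F$ and $F^*$.

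The only genuinely clever manoeuvre is the two-way bracketing of the fourfold product leading to \eqref{eqn: wills id}; once this identity is in hand, the equivalence reduces to two elementary observations, namely that $\zlg_\sa$ is annihilated by $\Delta$ on either side, and that a selfadjoint operator compressed by $\Delta$ on both sides is a positive operator.
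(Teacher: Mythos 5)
Your argument is correct and follows essentially the same route as the paper: the identity comes from computing $F^*\comp F\comp F^*\comp F$ via the two bracketings $(F^*\comp F)\comp(F^*\comp F)$ and $F^*\comp(F\comp F^*)\comp F$ (the latter using \eqref{eqn: tripleb}), and then cancelling a copy of $F^*\comp F$; the monotonicity statement follows from the identity together with $\Delta T=T\Delta=0$ and positivity of $(F^*\comp F)\Delta(F^*\comp F)$, with the reverse implication by exchanging $F\leftrightarrow F^*$. The only cosmetic difference is that you prove the implication $F^*\comp F\les T\Rightarrow F\comp F^*\les T$ first, applying \eqref{eqn: wills id} with $F$ replaced by $F^*$, whereas the paper takes the other direction first by using \eqref{eqn: wills id} as stated; these are, of course, interchangeable.
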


\begin{proof}
By the associativity of $\comp$,
setting $F_1 = F^*$, $F_2 = F \comp F^*$  and $F_3 = F$
in the expression~\eqref{eqn: tripleb}
for $F_1 \comp F_2 \comp F_3$ we see that
\begin{align*}
F^* \comp F +
\LHS(\ref{eqn: wills id})
&
=
F^* \comp ( F \comp F^* ) \comp F
\\
&
=
( F^* \comp F ) \comp ( F^* \comp F )
=
\RHS(\ref{eqn: wills id}) + F^* \comp F,
\end{align*}
so identity~\eqref{eqn: wills id} holds.
Suppose now that
$F \comp F^* \les T$.
Then, since $\Delta T = 0 = T \Delta$,
$\LHS(\ref{eqn: wills id}) \les T$
and so,
since
 $\Delta \ges 0$ and
 $F^* \comp F$ is selfadjoint,
$$
F^* \comp F
=
 \eqref{eqn: wills id} - ( F^* \comp F ) \Delta ( F^* \comp F )
\les \eqref{eqn: wills id}
\les T.
$$
The converse implication follows by exchanging $F$ and $F^*$.
\end{proof}

 Consider the following possible block matrix forms for
 $F \in \blg$:
%
\begin{subequations}
 \begin{align}
  \label{star}
 &\begin{bmatrix}
\beta I +  iH  - \frac{1}{2} (L^*L + A^2) & & -L^*C -A D (I - C^*C)^{1/2}
\\
L & & C - I
 \end{bmatrix},
\ \text{ and}
 \\
\nonumber
\\
 \label{**}
 &\begin{bmatrix}
 \beta I +  iH - \frac{1}{2} ( MM^* + B^2 ) & & M
\\
  - CM^* - (I - C C^*)^{1/2} E^*B  & & C - I
 \end{bmatrix},
 \end{align}
\end{subequations}
 in which $\beta \in \Real$, $H \in B(\hil)_{\sa}$, $A,B \in B(\hil)_+$,
$C \in B(\Hil)_1$ and $D,E \in B(\Hil;\hil)_1$,
so that $H = \im K$.



 \begin{thm}[\emph{Cf.} \cite{LWqsde}, \cite{GLSW}]
 \label{thm: LW, GLSW}
 Let $\beta \in \Real$ and $F \in \blg$.
 \begin{alist}
 \item
 The following are equivalent\tu{:}
 \begin{rlist}
 \item
 $F \in \qclgbeta$.
 \item
 $F$ has block matrix form~\eqref{star}.
\item
 $F$ has block matrix form~\eqref{**}.
 \item
 $F^* \in \qclgbeta$.
 \end{rlist}
 \item
 $F \in \ilg$ if and only if
 $F$ has block matrix form~\eqref{star},
 with $\beta = 0$, $A = 0$ and $C$ isometric.
 \item
 $F \in \ilg^*$ if and only if
 $F$ has block matrix form~\eqref{**},
 with $\beta = 0$, $B = 0$ and $C$ coisometric.
 \end{alist}

 \begin{itemize}
 \item[(d)]
 Let $\Hil_1 \op \Hil_2$ be an orthogonal decomposition of $\Hil$.
 Then,
with respect to the inclusion
$J_1: \hil \op \Hil_1 \to \hil \op \Hil$,
 \begin{equation}
 \label{eqn: Jone}
J_1^* \qclgbeta J_1 = \qclgonebeta.
\end{equation}

 \item[(e)]
 Set
  $\Hil' := \Hil \op (\Hil \op \hil)$.
Then, in terms of the inclusion
$J: \hil \op \Hil \to
( \hil \op \Hil ) \op ( \Hil \op \hil )
=
\hil \op \Hil'$,
$$
\clg = J^* \ulgprime J.
$$
 \end{itemize}
 \end{thm}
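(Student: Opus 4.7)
The overall strategy is to treat part (a) as the hub of the theorem: (b) and (c) are specialisations of its parametric form, while (d) and (e) are compression/dilation consequences.

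For (a), the plan is to recast $F^{*}\comp F \les 2\beta\Delta^{\perp}$, via \eqref{eqn: F star F}, as positivity of a $2 \times 2$ operator block matrix and then invoke the standard factorisation theory. Direction (ii)$\Rightarrow$(i) is a direct computation: inserting~\eqref{star} produces $(0,0)$-block $A^{2}$, $(1,1)$-block $I - C^{*}C$, and off-diagonal $AD(I - C^{*}C)^{1/2}$, and the resulting matrix factors as
\[
\begin{bmatrix} A & 0 \\ 0 & (I-C^{*}C)^{1/2} \end{bmatrix}
\begin{bmatrix} I & D \\ D^{*} & I \end{bmatrix}
\begin{bmatrix} A & 0 \\ 0 & (I-C^{*}C)^{1/2} \end{bmatrix} \ges 0,
\]
which is positive since $\norm{D} \les 1$. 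Conversely for (i)$\Rightarrow$(ii), set $C := Q$ and $A := (2\beta I - K^{*} - K - L^{*}L)^{1/2}$ (well-defined by the respective block inequalities), take $H := \im K$, and extract a contraction $D$ from the off-diagonal via the Douglas--Halmos factorisation for positive block matrices; re-assembling yields~\eqref{star}. Equivalence (i)$\Leftrightarrow$(iv) is precisely Proposition~\ref{propn: wills id} applied to $T = 2\beta\Delta^{\perp} \in \zlg_{\sa}$, and (ii)$\Leftrightarrow$(iii) follows from it upon observing that~\eqref{**} for $F$ is~\eqref{star} for $F^{*}$ (with $M \leftrightarrow L^{*}$, $B \leftrightarrow A$, $E \leftrightarrow D$).

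Parts (b) and (c) are immediate from (a) by specialising to $F^{*}\comp F = 0$: this forces $\beta = 0$, $A = 0$ and $C^{*}C = I$ in~\eqref{star}, whence the off-diagonal contribution vanishes and $M = -L^{*}C$; (c) is (b) applied to $F^{*}$. For (d), the inclusion $J_{1}^{*}\qclgbeta J_{1} \subseteq \qclgonebeta$ rests on the identity
\[
(J_{1}^{*} F J_{1})^{*} \comp (J_{1}^{*} F J_{1}) = J_{1}^{*}\bigl(F^{*} + F^{*} \Delta' F + F\bigr) J_{1}
\]
with $\Delta' := J_{1}\Delta_{1} J_{1}^{*} \les \Delta$, whence the right-hand side is dominated by $J_{1}^{*}(F^{*}\comp F) J_{1} \les 2\beta J_{1}^{*}\Delta^{\perp} J_{1} = 2\beta\Delta_{1}^{\perp}$. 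Conversely, any $F_{1} \in \qclgonebeta$ with components $(K, L_{1}, M_{1}, Q_{1})$ extends by padding rows and columns with zeros on the $\Hil_{2}$-component (taking $Q := Q_{1}\op I_{\Hil_{2}}$) to an element of $\qclgbeta$ whose series square satisfies $F^{*}\comp F - 2\beta\Delta^{\perp} = J_{1}(F_{1}^{*}\comp F_{1} - 2\beta\Delta_{1}^{\perp}) J_{1}^{*} \les 0$.

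For (e), the inclusion $J^{*}\ulgprime J \subseteq \clg$ follows from (d) with $\beta = 0$ and $\Hil_{2} := \Hil \op \hil$, combined with $\ulgprime \subseteq \mathfrak{c}(\hil\op\Hil')$. For the reverse inclusion I would dilate $F \in \clg$ with parameters $(H, L, A, C, D)$ from~\eqref{star} by setting $\wt{C} := U_{C} \op I_{\hil}$ on $\Hil\op\Hil\op\hil = \Hil'$, where $U_{C} := \left[\begin{smallmatrix} C & -(I - CC^{*})^{1/2} \\ (I - C^{*}C)^{1/2} & C^{*} \end{smallmatrix}\right]$ is the Julia unitary dilation of $C$, together with
\[
\wt{L} := \begin{bmatrix} L \\ D^{*} A \\ (I - DD^{*})^{1/2} A \end{bmatrix} \in B(\hil; \Hil'), \qquad \wt{K} := iH - \tfrac{1}{2}\wt{L}^{*}\wt{L}, \qquad \wt{M} := -\wt{L}^{*}\wt{C}.
\]
Then part (b) places the resulting $\wt{F}$ in $\mathfrak{i}(\hil\op\Hil')$, and unitarity of $\wt{C}$ lets form~\eqref{**} hold for $\wt{F}$ with $B = 0$, putting $\wt{F}$ in $\mathfrak{i}(\hil\op\Hil')^{*}$ too; hence $\wt{F} \in \ulgprime$. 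The main obstacle will be the verification that $J^{*}\wt{F} J = F$: this amounts to the identity $\wt{L}^{*}\wt{L} = L^{*}L + A DD^{*}A + A(I - DD^{*})A = L^{*}L + A^{2}$ (so that $\wt{K} = K$) together with the intertwining $C(I - C^{*}C)^{1/2} = (I - CC^{*})^{1/2} C$, which recovers the $(0,1)$-block $M$ on compression. The extra $\hil$-summand in $\Hil'$ precisely absorbs the $A$-parameter of $F$, mirroring how the duplicated $\Hil$-summand furnishes the unitary dilation of $C$.
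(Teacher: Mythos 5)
Your proposal is correct and tracks the paper's proof throughout: part (a) uses the same factorisation argument for positive $2\times 2$ operator block matrices (for (i)$\Leftrightarrow$(ii)), Proposition~\ref{propn: wills id} (for (i)$\Leftrightarrow$(iv)), and adjoints (for (ii)$\Leftrightarrow$(iii)); parts (b), (c) and (d) are likewise the paper's argument (the paper reads (b) directly off identity~\eqref{eqn: F star F} rather than through (a), a cosmetic difference). For (e) you add a useful conceptual gloss: naming $\wt{C}$ as the Julia unitary dilation of $C$ and noting that the extra $\hil$-summand absorbs the $A$-parameter through $\wt{L}$ explains why $\Hil' = \Hil\op\Hil\op\hil$ is the natural choice; if one expands the block entries, your $\wt{F}$ agrees entry-for-entry with the paper's explicit $F'$, so the two proofs coincide. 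One small misplacement: the intertwining identity $C(I-C^*C)^{1/2} = (I-CC^*)^{1/2}C$ is needed to verify unitarity of the Julia block $U_C$, not to recover the $(0,1)$-block $M$ under compression --- that block is immediate from $\wt{M} = -\wt{L}^*\wt{C}$, whose first component is $-(L^*C + ADS)$, which already matches form~\eqref{star}.
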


\begin{rem}
In block matrix form
 \begin{align*}
&
 J_1 =
  I_{\hil} \oplus
  \begin{bmatrix}
 I_{\Hil_1}   \\
   0_{\Hil_1; \Hil_2}
 \end{bmatrix}
 \in B(\hil \op \Hil_1; \hil \op \Hil),
\ \text{ and } \
 J =
  I_{\hil} \oplus
  \begin{bmatrix}
 I_{\Hil}   \\
   0_{\Hil; \Hil \op \hil}
 \end{bmatrix}
 \in B(\hil \op \Hil; \hil \op \Hil').
\end{align*}
\end{rem}

\begin{proof}
 The proof exploits the fact that, for an operator
 $T \in B(\hil \op \Hil)$, $T \ges 0$ if and only if
 $T$ has block matrix form
 $\left[ \begin{smallmatrix}
 X & X^{1/2} V Z^{1/2} \\ Z^{1/2} V^* X^{1/2} & Z
 \end{smallmatrix} \right]$ where
 $(X, Z, V) \in B(\hil)_+ \times B(\Hil)_+ \times B(\Hil; \hil)_1$.

(a)
 Set $G = F - \beta \Delta^\perp$.
 Suppose first that (ii) holds. Then $G$ is given by~\eqref{star} with $\beta = 0$
 and,
setting $S := (I - C^* C)^{1/2}$,
 \[
 - G^* \comp G =
 \begin{bmatrix}
 A^2 & ADS \\ SD^*A & S^2
 \end{bmatrix}
 \ges 0,
 \]
 so $G \in \clg$. Thus (ii) implies (i).
 Conversely, suppose that (i) holds and let
 $\left[ \begin{smallmatrix}
 K & M \\ L & C-I
 \end{smallmatrix} \right]$
 be the block matrix decomposition of $G$.
 Then $G \in \clg$ so
 \[
 \begin{bmatrix}
 K^* + K + L^*L & M + L^*C \\ M^* + C^*L & C^*C - I
 \end{bmatrix}
 =  G^* \comp G
 \les 0.
 \]
 Thus $(K^* + K + L^*L) \les 0$, $\|C\| \les 1$ and,
  for some contraction operator $D$,
\[
M =
-L^* C -
[ - (K^* + K + L^*L) ]^{1/2}
D
(I - C^* C)^{1/2}
\]
 so $G$ is given by~\eqref{star} with $\beta = 0$ and $H = \im K$.
 Thus (i) implies (ii).

 Therefore (i) and (ii) are equivalent;
 taking adjoints we see that (iv) and (iii) are equivalent too.
 The equivalence of (i) and (iv) follows from
 Proposition~\ref{propn: wills id}.

 (b)
This is an immediate consequence of~\eqref{eqn: F star F}.

(c)
 This follows from Part (b), by taking adjoints.

(d)
For $F_1 \in \qclgonebeta$,
$$
F_1 = J_1^* F J_1,
\ \text{ where } \
F = \begin{bmatrix}
F_1 & 0 \\ 0 & 0
\end{bmatrix}
\in
\qclgbeta,
$$
so $\RHS \subset \LHS$ in~\eqref{eqn: Jone}.
For the reverse inclusion, setting
 $\Delta_1 := 0_\hil \op I_{\Hil_1}$ and
 $\Delta := 0_\hil \op I_{\Hil}$,
 \[
 \Delta - J_1 \Delta_1 J_1^* =
 0_\hil \op I_{\Hil_1} \op I_{\Hil_2} -
 0_\hil \op I_{\Hil_1} \op 0_{\Hil_2} =
 0_\hil \op 0_{\Hil_1} \op I_{\Hil_2}
 \ges 0.
 \]
 Thus, for $F \in \clg$, the operator
 $F_1:= J_1^* F J_1$ is in $\clgone$ since
 \begin{align*}
 F_1^* \comp F_1
 &=
 J_1^* F^* J_1 +
 J_1^* F J_1 +
 J_1^* F^* J_1 \Delta_1 J_1^* F J_1
\les
J_1^* ( F^* \comp F ) J_1 \les 0.
 \end{align*}
Therefore
$J_1^* \clg J_1 \subset \clgone$
and so, since
 $J_1^* \beta \Delta^\perp J_1 = \beta \Delta_1^\perp$
 ($\beta \in \Real$),
 $\LHS \subset \RHS$.

 (e)
In view of (d),
it suffices to show that
$J^*  \ulgprime J \supset \clg$.
Accordingly, let
 $F \in \clg$.
 Then, by what we have proved already,
 $F$ has block matrix form~\eqref{star} with
 $\beta = 0$.
 Setting
 \[
 F' :=
 \begin{bmatrix}
 iH - \frac{1}{2}(L^*L + A^2) & -(L^*C + ADS) & L^*T - ADC^* & -AR
 \\
 L & C-I & -T & 0
 \\
 D^*A & S & C^* - I & 0
 \\
 RA & 0 & 0 & 0
 \end{bmatrix},
 \]
 where $R = (I - DD^*)^{1/2}$, $S = (I - C^* C)^{1/2}$, and $T = (I - C C^*)^{1/2}$,
 it is now easily verified that $F' \in \ulgprime$.
 Since $J^* F' J = F$, (e) follows
 and the proof is complete.
 \end{proof}

 \begin{rems}
 The dilation property (e)
 is effectively proved in~\cite{GLSW},
 under the assumption that $\hil$ and $\Hil$ are separable.
If $\Hil = \hil \ot \noise$ for a Hilbert space $\noise$
(as it is in the application to QS analysis,
where $\noise$ is the noise dimension space)
then, in (e),
$\Hil' = \hil \ot \noise'$ where $\noise' = \noise \op \noise \op \Comp$.

In terms of its  block matrix form
$\left[ \begin{smallmatrix} K & M \\ L & C - I \end{smallmatrix} \right]$,
and bound $\beta_0 := \beta_0(F)$,
the equivalent conditions for $F \in \blg$ to be in $\qclg$ read respectively as follows:

\begin{subequations}
\begin{align*}
&
\begin{bmatrix}
2( \re K - \beta_0 I_\hil) + L^*L
&
M + L^* C
\\
M^* + C^* L
&
C^*C - I_\Hil
\end{bmatrix}
\les 0,
\text{ and }
\\
&
\begin{bmatrix}
2( \re K - \beta_0 I_\hil) + M M^*
&
L^* + M C^*
\\
L + C M^*
&
C C^* - I_\Hil
\end{bmatrix}
\les 0.
\end{align*}
\end{subequations}
  \end{rems}


\begin{propn}[Left and right series decomposition]
\label{propn: canon series}
 Let $F =
 \left[ \begin{smallmatrix} K & M \\ L & C - I
 \end{smallmatrix} \right]
 \in \qclg$
and set $ \beta_0 =  \beta_0 (F)$.
 Then, setting
\begin{align*}
&
F_1 =
 \begin{bmatrix}
\beta_0 I_\hil + i \im K & 0
 \\
  0 & 0
 \end{bmatrix},
\
F_2^{\ell} =
 \begin{bmatrix}
 - \frac{1}{2} L^*L & -L^* \\ L & 0
 \end{bmatrix},
\
 F_3^{r} =
 \begin{bmatrix}
 - \frac{1}{2} MM^* & M
 \\
  -M^* & 0
 \end{bmatrix},
\\
&
  F_3^{\ell} :=
 \begin{bmatrix}
\re K +  \frac{1}{2} L^*L - \beta_0  I_\hil
& M + L^* C
\\
0
&
C - I
  \end{bmatrix},
\ \text{ and } \
F_2^{r} :=
\begin{bmatrix}
\re K +  \frac{1}{2} M M^* - \beta_0  I_\hil
&
0
\\
L + C M^*
&
C - I
  \end{bmatrix},
\end{align*}
the following hold\tu{:}
$$
F_1 \in  \zlg,
\ \
F_2^{\ell},  F_3^{r} \in \ulg,
\ \
F_2^{r},  F_3^{\ell}, \in \clg,
$$
and
 \[
 F_1 \comp F_2^{\ell} \comp F_3^{\ell}
 =
F
=
 F_1 \comp F_2^{r} \comp F_3^{r},
 \]
 \end{propn}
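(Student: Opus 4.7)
The proposition is essentially a verification statement, so my plan is to check the three membership claims and then the two factorisation identities, mostly by direct block matrix computation, using Theorem~\ref{thm: LW, GLSW} to avoid any ad hoc positivity arguments.

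First, $F_1 \in \zlg$ is immediate from the definition of $\zlg$, since $F_1 = (\beta_0 I_\hil + i\im K) \oplus 0_\Hil$. For $F_2^\ell$ and $F_3^r$ I would just compute $F^* \comp F$ and $F \comp F^*$ directly from~\eqref{eqn: F star F}: for $F_2^\ell$ one has $Q = I$ and $K + K^* + L^*L = -L^*L + L^*L = 0$, and the off-diagonal entries cancel similarly, giving $(F_2^\ell)^* \comp F_2^\ell = 0$; using Proposition~\ref{propn: wills id} (or symmetrically) one gets $F_2^\ell \comp (F_2^\ell)^* = 0$, hence $F_2^\ell \in \ilg \cap \ilg^* = \ulg$. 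The verification for $F_3^r$ is the obvious $M \leftrightarrow L^*$ transpose of this.

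For $F_3^\ell \in \clg$, I would read off its block data: $\widetilde L = 0$, $\widetilde Q - I = C - I$, and $\widetilde K = \re K + \frac{1}{2}L^*L - \beta_0 I_\hil$, $\widetilde M = M + L^*C$. Since $F \in \qclgbetai[0]$, Theorem~\ref{thm: LW, GLSW}(a)(ii) writes $F$ in form~\eqref{star} with some $A \in B(\hil)_+$ satisfying $A^2 = -2\re K - L^*L + 2\beta_0 I_\hil$ and $M = -L^*C - AD(I - C^*C)^{1/2}$ for some $D \in B(\Hil;\hil)_1$. Substituting,
\[
\widetilde K = -\tfrac12 A^2, \qquad \widetilde M = -AD(I-C^*C)^{1/2},
\]
so $F_3^\ell$ itself is of form~\eqref{star} with $\beta = 0$, $H = 0$, $L = 0$, and the same $A$, $C$, $D$; hence $F_3^\ell \in \clg$ by Theorem~\ref{thm: LW, GLSW}(a). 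The argument for $F_2^r \in \clg$ is the dual one using form~\eqref{**}: there $M$ vanishes, and the data $(B, E, C)$ supplied by the characterisation of $F$ via form~\eqref{**} carries over to $F_2^r$.

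For the two factorisations, I would compose left-to-right using the explicit product formula $F_1 \comp F_2 = \left[ \begin{smallmatrix} K_1 + K_2 + M_1 L_2 & M_1 Q_2 + M_2 \\ L_1 + Q_1 L_2 & Q_1 Q_2 - I \end{smallmatrix} \right]$. Taking the right decomposition as an example, $F_1 \comp F_2^r$ has block form $\left[\begin{smallmatrix} K + \frac{1}{2}MM^* & 0 \\ L + CM^* & C - I\end{smallmatrix}\right]$ (the $\beta_0 I$ cancels and $\im K + \re K = K$); composing with $F_3^r$ then cancels $\frac{1}{2}MM^*$ in the $(1,1)$ entry, restores $M$ in the $(1,2)$ entry, removes $CM^*$ from the $(2,1)$ entry via $C \cdot (-M^*)$, and preserves $C$ in the $(2,2)$ entry, giving back $F$. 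The left decomposition is symmetric, with $\frac{1}{2}L^*L$ playing the role of $\frac{1}{2}MM^*$.

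I do not expect any real obstacle: the main task is bookkeeping. The only step with any content is the derivation of the contractive membership of $F_3^\ell$ and $F_2^r$, where one must recognise that the two characterising block forms of Theorem~\ref{thm: LW, GLSW}(a) package exactly the data needed to rewrite the new $(1,1)$ and $(1,2)$ (respectively $(1,1)$ and $(2,1)$) entries as $-\frac{1}{2}A^2$, $-AD(I - C^*C)^{1/2}$ (respectively $-\frac{1}{2}B^2$, $-(I - CC^*)^{1/2}E^*B$).
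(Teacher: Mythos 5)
Your proposal is correct and follows essentially the same route as the paper: read the membership claims off from the block-matrix characterisations of Theorem~\ref{thm: LW, GLSW} (equivalently, the two inequalities in the remark following it), and then verify the two factorisations by direct computation in the series product. One small caution on the $\ulg$ membership: Proposition~\ref{propn: wills id} by itself only converts the inequality $F^* \comp F \les 0$ into $F \comp F^* \les 0$, not the equality, so to conclude $F_2^\ell \comp (F_2^\ell)^* = 0$ you should rely on the direct computation you also offer (or observe that $\Delta F_2^\ell + I$ is invertible and invoke identity~\eqref{eqn: wills id}).
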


 \begin{proof}
It follows from Theorem~\ref{thm: LW, GLSW}, and the above block matrix inequalities characterising membership of $\qclg$, that
$$
F_2^\ell, F_3^{r} \in \ulg
\ \text{ and } \
F_3^\ell, F_2^{r} \in \clg.
$$
The two series decompositions follow from the identities
 \[
 F_1 \Delta  F_2^{\ell} = 0, \
  F_2^{\ell} \Delta  F_3^{\ell} =
 \begin{bmatrix}
 0 & L^* - L^* C \\ 0 & 0
 \end{bmatrix}, \
 F_1 \Delta  F_2^{r} = 0
  \ \text{ and } \
  F_2^{r} \Delta  F_3^{r} =
 \begin{bmatrix}
 0 & 0 \\  M^* - C M^* & 0
 \end{bmatrix}.
 \]
The rest is clear.
\end{proof}

\begin{rems}
(i)
The left and right series decompositions are related
via the adjoint operation as follows:
\[
 (F^*)_1  =  (F_1)^*,
\
 (F^*)_2^{\ell} = (F_3^{r})^*
\ \text{ and } \
 (F^*)_3^{\ell} = (F_2^{r})^*.
 \]

(ii)
Let
 $F =
 \left[ \begin{smallmatrix} K & M \\ L & C - I
 \end{smallmatrix} \right]
 \in \qclg$
and set
$H := \im K$.
Then
$F \in \ilg$ if and only if
$F$ has left series decomposition
\[
i
 \begin{bmatrix}
 H  & 0
 \\
  0 & 0
 \end{bmatrix}
 \comp
 \begin{bmatrix}
 - \frac{1}{2} L^*L & -L^* \\ L & 0
 \end{bmatrix}
 \comp
 \begin{bmatrix}
 0 &  0 \\ 0 & C-I
  \end{bmatrix},
\]
 with $C$ isometric,
whereas
$F \in \ilg^*$ if and only if
$F$ has right series decomposition
\[
i
 \begin{bmatrix}
 H  & 0
 \\
  0 & 0
 \end{bmatrix}
 \comp
 \begin{bmatrix}
 0 &  0 \\ 0 & C-I
  \end{bmatrix}
 \comp
 \begin{bmatrix}
 - \frac{1}{2} M M^* &M \\ -M^* & 0
 \end{bmatrix},
\]
 with $C$ coisometric.
\end{rems}

 Now suppose that
 $\Hil$ has an orthogonal decomposition $\Hil_1 \op \Hil_2$.
 Define injections
  \begin{align}
\nonumber
 &
 \iota:
\blgone \to \blg,
\quad
 F = \begin{bmatrix}
 K & M \\ L & N \end{bmatrix}
 \mapsto
 F \op 0_{\Hil_2} =
 \begin{bmatrix}
 K & M & 0 \\
 L & N & 0 \\
 0 & 0 & 0  \end{bmatrix}, \text{ and}
 \\
 &
\label{eqn: iota con}
 \iota':
\blgtwo \to \blg,
\quad
 F = \begin{bmatrix}
 K & M \\ L & N \end{bmatrix}
 \mapsto
 \Sigma \big( F \op 0_{\Hil_1}  \big) =
 \begin{bmatrix}
 K & 0 & M \\
 0 & 0  & 0 \\
 L & 0 & N
  \end{bmatrix},
\end{align}
 $\Sigma$ being the sum-flip map
 $B(\hil \op \Hil_2 \op \Hil_1) \to B(\hil \op \Hil)$,
 and define the composition
 \[
 \blgone \times \blgtwo \to \blg, \quad
 (F_1, F_2) \mapsto
F_1 \boxplus F_2 := \iota (F_1) + \iota'(F_2),
 \]
 known as the \emph{concatenation product} in quantum control
 theory (\cite{GoughJames}).
 Thus
 \begin{equation*}
\begin{bmatrix}
 K_1 & M_1  \\
 L_1 & N_1
 \end{bmatrix}
 \boxplus
 \begin{bmatrix}
 K_2 & M_2  \\
 L_2 & N_2
 \end{bmatrix}
  =
 \begin{bmatrix}
 K_1 + K_2 & M_1 & M_2 \\
 L_1 & N_1 & 0 \\
 L_2 & 0 & N_2  \end{bmatrix}.
 \end{equation*}
 Note that
 \[
 \iota (F_1) = F_1 \boxplus 0_{\Hil_2}
\text{ and }
 \iota'(F_2) =   0_{\Hil_1}  \boxplus F_2.
\]
In view of the identity
\[
 \iota(F_1) \, \Delta \, \iota'(F_2)
 = 0_{\hil \op \Hil}
 =
 \iota'(F_2) \, \Delta \, \iota(F_1),
\]
for
$F_1 \in \blgone$ and $F_2 \in \blgtwo$,
 the concatenation product is effectively a special case
 of the series product:
 \begin{equation}
  \label{eqn: effectively}
 F_1 \boxplus F_2 =
 \iota( F_1) \, \comp \, \iota'(F_2 ).
 \end{equation}

 We end this section with a significant representation of the quantum It\^o algebra.
 It is relevant to
 the realisation of QS cocycles as time-ordered exponentials (\cite{Holevo}),
 and also to
 the convergence of a class of scaled quantum random walks to QS cocycles (\cite{BGL}).
 Set
 $\mathcal{S} := I_{ \hil \op \Hil \op \hil } + \Al $,
 where
 $ \Al$ is the following subalgebra of $B( \hil \op \Hil \op \hil )$:
 \[
 \big\{
 T \in B( \hil \op \Hil \op \hil ): P_{ \hil \op \Hil \op \{ 0 \} } T = T = T P_{ \{ 0 \} \op \Hil \op \hil }
 \big\}\tu{;}
 \]
 thus $\mathcal{S}$ consists of the elements of $B( \hil \op \Hil \op \hil )$ having
 block matrix form
 $\left[ \begin{smallmatrix} I_\hil & * & * \\ 0 & * & * \\ 0 & 0 & I_\hil \end{smallmatrix} \right]$.
 With respect to
(operator composition and)
the involution given by
 \[
 T \mapsto T^\star
 := \Xi T^* \Xi,
 \ \text{ where } \
 \Xi :=
 \begin{bmatrix}
 0 & 0 & I_\hil \\ 0 & I_\Hil & 0 \\ I_\hil & 0 & 0
 \end{bmatrix},
 \]
 $\mathcal{S}$ is
 a sub-*-monoid of $B( \hil \op \Hil \op \hil )$,
 and
 the following is readily verified.

 \begin{propn}[\cite{Holevo},~\cite{Belavkin}]
 \label{propn: monoidiso}
 The prescription
 \[
  \begin{bmatrix}
 K & M \\ L & Q-I
 \end{bmatrix}
 \mapsto
  \begin{bmatrix}
 I_\hil & M & K \\ 0 & Q & L \\ 0 & 0 & I_\hil
 \end{bmatrix}
 =
 I_{ \hil \op \Hil \op \hil } +
 \begin{bmatrix}
 0 & M & K \\ 0 & Q-I & L \\ 0 & 0 & 0
 \end{bmatrix}
 \]
 defines an isomorphism of *-monoids
$\phi:
 \blg \to \mathcal{S}$.
 \end{propn}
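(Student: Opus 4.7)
The plan is to verify directly the four requirements for $\phi$ to be an isomorphism of *-monoids: bijectivity onto $\mathcal{S}$, preservation of the identity, multiplicativity (intertwining $\comp$ with operator composition), and preservation of the involution (intertwining $*$ with $\star$). Each is a short calculation with $3 \times 3$ block matrices.

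Bijectivity is immediate from inspection of the prescription. By definition an element of $\mathcal{S}$ has the block form $\left[ \begin{smallmatrix} I_\hil & A & B \\ 0 & X & Y \\ 0 & 0 & I_\hil \end{smallmatrix} \right]$ with free parameters $A$, $B$, $X$, $Y$ of the appropriate shape, and this is precisely the image of $\left[ \begin{smallmatrix} B & A \\ Y & X - I \end{smallmatrix} \right] \in \blg$ under $\phi$. Preservation of the identity is equally immediate: the zero element of $\blg$ corresponds to $K = L = M = 0$ and $Q = I_\Hil$, whence $\phi(0_{\hil \op \Hil}) = I_{\hil \op \Hil \op \hil}$.

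For multiplicativity I carry out the $3 \times 3$ block matrix product $\phi(F_1) \phi(F_2)$, whose $(1,2)$, $(1,3)$, $(2,2)$ and $(2,3)$ entries evaluate to $M_2 + M_1 Q_2$, $K_1 + K_2 + M_1 L_2$, $Q_1 Q_2$ and $L_1 + Q_1 L_2$ respectively, while the remaining entries come out to $I_\hil$, $0$ and $0$. These match exactly the entries of the explicit formula for $F_1 \comp F_2$ recorded earlier in the section, so that $\phi(F_1) \phi(F_2) = \phi(F_1 \comp F_2)$. Indeed this matching is the whole point of the construction: upper-triangular block matrix multiplication serves as an efficient bookkeeping device for the series product.

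For the involution, I form $\phi(F)^*$, which is the lower-triangular matrix with diagonal entries $I_\hil$, $Q^*$, $I_\hil$ and subdiagonal entries $M^*$, $K^*$, $L^*$, and then conjugate by $\Xi$. Since $\Xi$ swaps the first and third block coordinates while fixing the second, the conjugation $\Xi \phi(F)^* \Xi$ returns an upper-triangular matrix whose off-diagonal entries coincide with those of $\phi(F^*)$, using that $F^* = \left[ \begin{smallmatrix} K^* & L^* \\ M^* & Q^* - I \end{smallmatrix} \right]$. There is no genuine obstacle in any of these verifications; the only point requiring a little care is tracking the permutation of the block indices induced by $\Xi$, which is why the author simply remarks that the result is readily verified.
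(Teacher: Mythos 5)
Your verification is correct, and it is precisely the computation the paper has in mind: the paper gives no proof of Proposition 1.6, stating only that it is "readily verified," so a direct check of bijectivity, preservation of the identity, multiplicativity via the $3\times 3$ block product, and preservation of the involution via conjugation by $\Xi$ is exactly the intended argument. Your block computations match the formula recorded for $F_1 \comp F_2$, and the conjugation $\Xi\,\phi(F)^*\,\Xi$ does indeed yield $\phi(F^*)$, so nothing is missing.
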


 \section{Quantum stochastics}
 \label{section: quantum stochastics}

\emph{For the rest of the paper fix Hilbert spaces
 $\init$ and $\noise$, and set $\khat := \Comp \op \noise$
and}
$\Kil := L^2(\Rplus; \noise)$.
 The quantum It\^o algebra developed in the previous section
 is applied below with $\hil = \init$ and $\Hil = \noise \ot \init$,
 so that
 $\hil \op \Hil = \khat \ot \init$.
 In  this context, the operator
 \[
 \Delta =  \Delta_{\noise} := P_{\{0_{\Comp}\} \op \noise} =
 \begin{bmatrix} 0 & 0 \\ 0 & I_{\noise} \end{bmatrix}
 \in B(\khat)
 \]
 is ubiquitous;
 below it is freely ampliated.

 In this section we collect the quantum stochastic (QS) facts needed below.
 For more detail, see~\cite{L}; for further background, see~\cite{Partha} and~\cite{Meyer}.
 Let $\Fock$ denote the symmetric Fock space over $\Kil$.
  We use normalised exponential vectors
 \[
 \vp(g) :=
 e^{-\norm{g}^2/2} \ve(g)
 \text{ where }
 \ve(g) :=
 \big( (n!)^{-1/2} g^{\ot n} \big)_{n\ges 0}
 \qquad
 (g \in \Kil),
 \]
 in terms of which the \emph{Fock--Weyl operators} are the unitary operators on $\Fock$
 determined by the identity
 \[
 W(f) \vp(g) = e^{-i \im \ip{f}{g}} \vp(f+g)
 \qquad
 (f,g\in \Kil),
 \]
 and the \emph{second quantisation operators} are the contractions from
 $\Fock$ to $\Fock'$
 determined by the identity
 \begin{equation}
 \label{eqn: second quant}
 \Gamma(C) \ve(g) := \ve(Cg)
 \qquad
 (C \in B(\Kil; \Kil')_1, \, g \in \Kil),
 \end{equation}
where
$\Fock'$ is the symmetric Fock space over a Hilbert space $\Kil'$.
Note that
$\Gamma(C)$ is isometric if $C$ is, and $\Gamma(C^*) = \Gamma(C)^*$.
 There is a useful family of slice maps:
 \[
 \Omega(g',g) :=
 \id_{B(\hil; \hil')} \otol \omega_{\vp(g'), \vp(g)}:
 B(\hil; \hil') \otol B(\Fock) \to B(\hil; \hil')
 \qquad
 (g',g\in \Kil),
 \]
 amongst which
 $\Expect := \Omega(0,0)$
 is referred to as the \emph{vacuum expectation}.
 (The Hilbert spaces $\hil$ and $\hil'$ are determined by
 context.)
 Moreover, the action of the
 slice maps extends to the class of unbounded operators from
 $\hil \ot \Fock$ to $\hil' \ot \Fock$ whose domains include $\hil \otul \Exps$,
 where $\Exps:= \Lin \{ \vp(g): g \in \Kil \}$.
 The following obvious identity is exploited below:
 \begin{equation}
  \label{eqn: slices}
\Omega(g',g)(T) =
 \Expect \big[
 (I_{\hil'} \ot W(g'))^*\, T\, (I_{\hil} \ot W(g))
 \big].
 \end{equation}
We also need two families of endomorphisms of (the von Neumann algebra) $B(\Fock)$:
\[
\rho_t: T \mapsto R_t T R_t
\ \text{ and } \
\sigma_t: T \mapsto I_{\Fock_{[0,t[}} \ot S_t T S_t^*
\qquad
(t \in \Rplus),
\]
where
$R_t := \Gamma ( r_t )$
and
$S_t := \Gamma ( s_t )$
for the unitary operators
$r_t$ and $s_t$ defined as follows
\begin{align*}
&
r_t: \Kil \to \Kil,
\quad
(r_t f)(s) =
 \begin{cases}
f(t-s)
& \text{if } s \in [0,t[
\\
f(s) & \text{if } s \in [t, \infty[
\end{cases},
\\
&
s_t: \Kil \to \Kil_{[t, \infty [},
\quad
(s_t f)(s) =
f(s-t),
\end{align*}
where $ \Kil_{[t, \infty [} := L^2 ( [t, \infty[ ; \noise )$.
The \emph{time-reversal maps} $\big( \rho_t \big)_{t\ges 0}$
are obviously involutive:
$\rho_t^2 = \id_{B(\Fock)}$,
and the {time-shift maps} $(\sigma_t)_{t \in \Rplus}$ form a semigroup,
known as the
\emph{CCR flow of index} $\noise$.
Both are freely ampliated to act on $B(\hil; \hil') \otol B(\Fock)$,
for Hilbert spaces $\hil$ and $\hil'$.
For use below,
note the identity
\begin{equation}
 \label{eqn: Omega sigma}
\Omega\big( g'_{[r, \infty[}, g_{[r, \infty[} \big) \circ \sigma_r
=
\Omega\big(s_r^* (g'|_{[r, \infty[}), s_r^*(g|_{[r, \infty[}) \big)
\qquad
(g, g' \in \Kil, r \in \Rplus).
\end{equation}

Let
 $X^i = \big( X^i_0 + \Lambda_t(H^i) \big)_{t\ges 0}$  for a
 QS integrand $(\hil_i,\hil_{i-1})$-process $H^i$
 and
 bounded operator $X^i_0 \in B(\hil_i;\hil_{i-1}) \ot I_{\Fock}$,
where $i=1,2$.
 Suppose that all of the processes
 $H^1, H^2, X^1$ and $X^2$ are bounded,
 that is they consist of bounded operators,
 and $H^1$ and $H^2$ are strongly continuous.
 Then the quantum It\^o product formula (\cite{HuP}) reads
 \begin{equation}
 \label{eqn: QI}
 X^1_t X^2_t =
 X^1_0 X^2_0 + \Lambda_t (H)
  \text{ where }
 H =
\big(
 H^1_s (I_{\khat} \ot X^2_s) +
 (I_{\khat} \ot X^1_s) H^2_s +
 H^1_s \Delta H^2_s
\big)_{s \ges 0},
 \end{equation}
 with $\Delta$ abbreviating $\Delta \ot I_{\hil_1 \ot \Fock}$.

 Let $X = \big( \Lambda_t(H) \big)_{t\ges 0}$
 for a bounded QS integrand $(\hil_1,\hil_2)$-process $H$
 with block matrix form
 $\left[\begin{smallmatrix} K & * \\ * & * \end{smallmatrix}\right]$,
 then
 \begin{equation}
  \label{eqn: expect}
 \Expect [ X_t ]\, v =
 \int_0^t \ud s\
 \Expect [K_s]\, \!v
 \qquad
 (v \in \hil_1, t \in \Rplus),
\end{equation}
 and,
 if the process $X$ is bounded then,
   for bounded operators
 $R\in B(\hil_2; \hil_3) \ot I_\Fock$ and
 $S\in B(\hil_0; \hil_1) \ot I_\Fock$,
 \begin{equation}
  \label{eqn: under}
 R \Lambda_t (H) S =
 \Lambda_t \big(
 (I_{\khat} \ot R) H_\cdot (I_{\khat} \ot S)
 \big)
\qquad
(t \in \Rplus).
 \end{equation}

 A bounded  \emph{QS} (\emph{left}) \emph{cocycle} on $\init$,
 with noise dimension space $\noise$,
 is a bounded process $V$ on $\init$
 which satisfies
 \[
V_0 = I_{\init \ot \Fock}
\ \text{ and } \
 V_{s+t} = V_s \sigma_s(V_t)
 \qquad
 (s,t\in\Rplus).
 \]
 It is a \emph{QS right cocycle} if instead
 it satisfies $V_{s+t} = \sigma_s(V_t) V_s$
 for $s,t\in\Rplus$.
 If $V$ is a QS left cocycle then,
  $V^* := (V^*_t)_{t\ges 0}$ and $\reversedco := \big( \rho_t ( V_t ) \big)_{t\ges 0}$ define
QS right cocycles, and
 \begin{equation}
  \label{eqn: dual}
\dualco := \big(  \rho_t ( V^*_t  ) = \rho( V_t )^*  \big)_{t\ges 0}
 \end{equation}
defines a QS left cocycle,
 called the \emph{dual cocycle} of $V$ (\cite{Journe}).
This said, \emph{we work exclusively with QS left cocycles in this paper}.
 Following standard terminology of semigroup theory (\cite{HiP}),
 \[
 \beta_0(V) :=
 \inf \{ \beta \in \Real:
 \sup\nolimits_{t>0} \| e^{-\beta t} V_t \| < \infty
 \}
 \]
 is referred to as the \emph{exponential growth bound} of $V$.
 If $V$ is strongly continuous then $\beta_0 (V) < \infty$;
 $V$ is called \emph{quasicontractive} if,
 for some $\beta \in \Real$,
 the QS cocycle $( e^{-\beta t} V_t )_{t\ges 0}$ is contractive.
 When $V$ is locally uniformly bounded,
it is called \emph{elementary} (or \emph{Markov regular}) if
  its expectation semigroup
 $\big( \Expect [ V_t ] \big)_{t\ges 0}$ is norm continuous.

 Note that, given a QS cocycle $V$, the two-parameter family
 $V_{r,t} := \sigma_r(V_{t-r})$ satisfies
 $V_{0,t} = V_t$ ($t \in \Rplus$),
 the evolution equation
 \begin{equation*}
V_{r,r} = I_{\init \ot \Fock}
\ \text{ and } \
 V_{r,t} = V_{r,s} V_{s,t}
 \qquad
 (t \ges s \ges r),
 \end{equation*}
 and the biadaptedness property
\begin{equation*}
 V_{r,t} \in
 \big( B(\init) \ot I_{[0,r[} \big) \otol \big( B(\Fock_{[r,t[}) \ot I_{[t,\infty[} \big)
 \qquad
 (t \ges r \ges 0).
 \end{equation*}
 For use below, note that in the notation~\eqref{eqn: slices},
these \emph{QS evolutions} satisfy
the time-covariance identity
 \begin{subequations}
 \label{eqn: QSE}
 \begin{equation}
\label{subeqn: QSEc}
 \Omega(  c'_{[r,t[}, c_{[r,t[} ) (V_{r,t})
 =
 \Omega( c'_{[0,t-r[}, c_{[0,t-r[} ) (V_{t-r})
\end{equation}
by~\eqref{eqn: Omega sigma},
and
further evolution identity
\begin{equation}
\label{subeqn: QSEd}
 \Omega(  g'_{[r,t[}, g_{[r,t[} ) (V_{r,t})
 =
 \Omega( g'_{[r,s[}, g_{[r,s[} ) (V_{r,s})
 \Omega( g'_{[s,t[}, g_{[s,t[} ) (V_{s,t})
 \end{equation}
 \end{subequations}
 for $c', c \in \noise$, $g', g \in \Ltwoloc(\Rplus;\noise)$ and
 $t \ges s \ges r \ges 0$.

 \begin{rem}
 Note that the relations~\eqref{subeqn: QSEc} and~\eqref{subeqn: QSEd} also hold
 when $V$ is a product of such QS evolutions.
 \end{rem}

 \begin{thm}[\cite{Fagnola}, [$\text{LW}\!_{1,2}$]]
 \label{thm: cocycle = qsde}
 For $F \in \Qqclgbeta$ where $\beta \in \Real$,
 the \tu{(}left\tu{)}
 QS differential equation
 $\ud X_t = X_t \cdot F \ud \Lambda_t$, $X_0 = I$ has a unique weakly regular weak solution,
 denoted $X^F$,
 moreover $X^F$ is a quasicontractive, elementary QS cocycle,
 with exponential growth bound at most $\beta$,
 strongly satisfying its QS differential equation.
 The resulting map
$F \mapsto X^F$
 is bijective
 from $\Qqclg$ to
 the class of quasicontractive elementary QS cocycles\tu{;} it
 restricts to bijections from
 $\Qclg$, $\Qilg$ and $\Qilg^*$
 to the respective subclasses of
 contractive, isometric and coisometric, elementary QS
 cocycles, moreover it satisfies
 $X^{F^*} =  (X^F)^{\dual}$.
 \end{thm}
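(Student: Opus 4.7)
The plan is to combine Picard iteration for QSDEs with bounded coefficients with the structural identities developed earlier in this section. For $F \in \Qqclg$, the block matrix form~\eqref{star} guarantees that $F$ is bounded on $\khat \ot \init$. Iterating the integrated form $X_t = I + \Lambda_t(X \cdot F)$ in the space of adapted, bounded, strongly continuous processes on $\init \otul \Exps$ produces, by the standard contraction-mapping argument on compact intervals, a unique weakly regular weak solution $X^F$ that in fact satisfies the QSDE strongly. Well-definedness and injectivity of the map $F \mapsto X^F$ follow by recovering all four blocks of $F$ from the derivatives at $t = 0$ of the matrix elements $\Omega(g', g)(X^F_t)$, via the vacuum formula~\eqref{eqn: expect} combined with the Weyl identity~\eqref{eqn: slices}.

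The quasicontractive estimate is the analytic heart of the argument. For $F \in \Qqclgbeta$ the rescaled process $Y_t := e^{-\beta t} X^F_t$ solves the QSDE with coefficient $F - \beta \Delta^\perp$, which by definition lies in $\clg$; it therefore suffices to handle the case $\beta = 0$. Applying the quantum It\^o product formula~\eqref{eqn: QI} to $Y_t^* Y_t$ and appealing to~\eqref{eqn: under} yields
\[
\ip{u \ot \ve(f)}{(I - Y_t^* Y_t)(v \ot \ve(g))}
=
- \int_0^t \ip{Y_s(u \ot \ve(f))}{N_s\, Y_s(v \ot \ve(g))} \, \ud s,
\]
where at each $s$ the operator $N_s$ is the compression of $F^* \comp F$ by the vectors $\wh{f}(s), \wh{g}(s) \in \khat$. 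Since $F^* \comp F \les 0$, setting $f = g$ and $u = v$ gives $\|Y_t \xi\| \les \|\xi\|$ for $\xi = u \ot \ve(f)$ on the total set of such vectors, whence $\|X^F_t\| \les e^{\beta t}$. The same It\^o computation, run in reverse, yields the characterisations $F \in \clg \iff X^F$ contractive, $F \in \Qilg \iff X^F$ isometric, and $F \in \Qilg^* \iff X^F$ coisometric.

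The cocycle property is an immediate consequence of uniqueness: for fixed $s \ges 0$, both $(X^F_{s+t})_{t \ges 0}$ and $(X^F_s \sigma_s(X^F_t))_{t \ges 0}$ solve the same shifted QSDE starting from $X^F_s$ at time $s$. Markov regularity follows from~\eqref{eqn: expect}: the expectation semigroup satisfies $\tfrac{d}{dt} \Expect[X^F_t] = \Expect[X^F_t]\, K$, with $K$ the $(0,0)$-block of $F$, so $\Expect[X^F_t] = e^{tK}$ is norm-continuous. The duality $X^{F^*} = (X^F)^{\dual}$ is obtained by taking adjoints in the QSDE and conjugating by the time-reversal family $\rho_t$ to see that $(X^F)^{\dual}$ satisfies the QSDE with coefficient $F^*$; uniqueness then closes the argument.

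The main obstacle will be surjectivity of $F \mapsto X^F$ onto the class of quasicontractive elementary cocycles. Given such a $V$ with growth bound $\beta$, one must recover a \emph{bounded} coefficient $F$ and verify $V = X^F$. The strategy of [LW$_{1,2}$] is to extract candidate block matrix entries $K, L, M, Q - I$ by differentiating the matrix elements $\Omega(g', g)(V_t)$ at $t = 0$ against exponential vectors with step-function test data; Markov regularity of $V$ is precisely what guarantees that these derivatives produce bounded operators, and uniformity of the associated estimates in $g', g$ is what assembles them into a single element of $\Qblg$. Once $F$ is in hand, $V$ solves the QSDE with coefficient $F$ by construction, and the quasicontractive bound combined with the reversed It\^o computation above forces $F^* \comp F \les 2\beta \Delta^\perp$, placing $F$ in $\Qqclgbeta$ as required.
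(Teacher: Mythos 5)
This theorem is not proved in the paper --- it is recalled with citations to Fagnola and Lindsay--Wills, so there is no ``paper's own proof'' to compare against. Your outline is a fair reconstruction of the approach taken in those references: Picard iteration for bounded coefficients, the It\^o formula to pass between $F^*\comp F$ and the growth/contraction behaviour of $X^F$, uniqueness for the cocycle and duality properties, the expectation semigroup $e^{tK}$ for Markov regularity, and recovery of a bounded generator from the associated semigroups for surjectivity.

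There is, however, a real gap in your quasicontractivity argument. From the identity
\[
\ip{u \ot \ve(f)}{(I - Y_t^* Y_t)(v \ot \ve(g))}
=
- \int_0^t \ip{Y_s(u \ot \ve(f))}{N_s\, Y_s(v \ot \ve(g))} \, \ud s,
\]
you conclude, by ``setting $f = g$ and $u = v$'', that $\|Y_t\xi\| \les \|\xi\|$ on the total set of vectors $\xi = u \ot \ve(f)$, and then assert ``whence $\|X^F_t\| \les e^{\beta t}$''. That last step fails: a bounded operator that is contractive on each vector of a total set need not be a contraction (a simple $2\times 2$ nilpotent matrix gives a counterexample). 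What you actually need is positivity of the sesquilinear form $\xi \mapsto \ip{\xi}{(I - Y_t^*Y_t)\xi}$ on the \emph{linear span} of exponential vectors. Since for $f \ne g$ your ``compression'' $N_s$ is not even selfadjoint, this requires the standard Gram-matrix argument: write $-F^*\comp F = R^*R$ (possible since $F^*\comp F \les 0$) and observe that for any finite family $u_i \ot \ve(f_i)$ and scalars $c_i$,
\[
-\sum_{i,j} \ol{c_i}\, c_j\, \ip{Y_s(u_i \ot \ve(f_i))}{N_s^{(ij)}\, Y_s(u_j \ot \ve(f_j))}
= \Big\| \sum_i c_i\, (R \ot I_\Fock) \big( E^{\wh{f_i}(s)} \ot I_\Fock \big) Y_s (u_i \ot \ve(f_i)) \Big\|^2 \ges 0,
\]
so that the integrated form is positive on the span, and only then does $\|Y_t\| \les 1$ follow. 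This square-root/Gram-matrix step (Mohari--Sinha) is the analytic crux of the contractivity proof and cannot be elided. A second, smaller, issue: Picard iteration in the space of \emph{bounded} adapted processes presupposes an a priori bound that is exactly what the contractivity estimate is meant to supply; the usual route is to produce the iterates on $\init \otul \Exps$ first, obtain the above bound, and only then upgrade to a bounded, strongly regular solution.
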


 The unique operator $F$ associated with a
quasicontractive elementary QS cocycle $V$ in this way
 is referred to as the \emph{stochastic generator} of $V$.

There is a basic class of QS cocycles which plays an important role.

\begin{eg}
\label{examp: Weyl}
For each $c \in \noise$, the (Fock) \emph{Weyl cocycle} $W^c$ is given by
 \begin{equation*}
 W^c := \big( I_\init \ot W(c_{[0,t[}) \big)_{t\ges 0}
 \qquad
 (c \in \noise).
 \end{equation*}
 These are QS unitary cocycles (both left and right),
and $X = W^c$ satisfies
 \begin{equation*}
X^{\reversed} = X
\ \text{ and } \
X^{\dual} = W^{-c} = X^*,
 \end{equation*}
 moreover $X$ is elementary with stochastic generator
 \begin{equation*}
 F_c :=
  \begin{bmatrix}
 -\frac{1}{2} \norm{c}^2 & -\bra{c}  \\
 \ket{c} & 0
 \end{bmatrix} \ot I_\init
 \in B(\khat\ot\init).
 \end{equation*}
\end{eg}

 \begin{propn}
 \label{propn: VW}
 Let $F \in \Qqclg$ and $G \in \QqclgC \ot I_\init \subset \Qqclg$. Then
 \[
 X^F X^G = X^{F \comp G}
 \text{ and }
 X^G\, X^F = X^{G \comp F}.
 \]
 \end{propn}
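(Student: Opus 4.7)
The plan is to apply the quantum It\^o product formula~\eqref{eqn: QI} to $X^F_t X^G_t$, to simplify the resulting QS integrand using the fact that $G$ acts trivially on $\init$, and then invoke uniqueness in Theorem~\ref{thm: cocycle = qsde} to identify the product with $X^{F \comp G}$; the second identity will follow by the symmetric calculation. Before starting, one checks that $F \comp G$ and $G \comp F$ lie in $\Qqclg$ so that the target cocycles are indeed defined: from the bounds on $(G \comp F)^* \comp (G \comp F)$ collected in Section~\ref{section: quantum Ito algebra}, if $F \in \Qqclgbetaone$ and $G \in \Qqclgbetatwo$, then both $F \comp G$ and $G \comp F$ lie in $\mathfrak{qc}_{\beta_1 + \beta_2}(\khat \ot \init)$.

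The crucial observation is that, writing $G = G_0 \ot I_\init$ with $G_0 \in \QqclgC$, uniqueness in Theorem~\ref{thm: cocycle = qsde} applied to the QSDE over the initial space $\Comp$ with stochastic generator $G_0$ forces
\[
X^G = I_\init \ot Y, \ \text{ where } \ Y := X^{G_0}
\]
is a quasicontractive elementary QS cocycle on $\Comp$. Consequently, for $s \ges 0$, the operator $I_\khat \ot X^G_s$ acts as $I_{\khat \ot \init} \ot Y_s$ on $\khat \ot \init \ot \Fock$, and hence commutes with any ampliation to this space of an operator on $\khat \ot \init$ --- in particular with (the ampliations of) $F$, $G$, $\Delta$, and $F \Delta$.

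Applying~\eqref{eqn: QI} with $X^1 := X^F$ and $X^2 := X^G$ yields $X^F_t X^G_t = I + \Lambda_t(H)$, where
\[
H_s = (X^F_s \cdot F)(I_\khat \ot X^G_s) + (I_\khat \ot X^F_s)(X^G_s \cdot G) + (X^F_s \cdot F) \Delta (X^G_s \cdot G).
\]
With $X^F_s \cdot F$ understood as $(I_\khat \ot X^F_s) F$ (and similarly $X^G_s \cdot G$ as $(I_\khat \ot X^G_s) G$), the commutation above allows $(I_\khat \ot X^G_s)$ to be pulled through $F$ and $F \Delta$, and the three summands collapse to
\[
H_s = (I_\khat \ot X^F_s X^G_s) \bigl( F + G + F \Delta G \bigr) = (X^F_s X^G_s) \cdot (F \comp G).
\]
Thus $X^F X^G$ is a weakly regular weak, locally uniformly bounded solution of the QSDE with stochastic generator $F \comp G$, so uniqueness in Theorem~\ref{thm: cocycle = qsde} forces $X^F X^G = X^{F \comp G}$. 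The identity $X^G X^F = X^{G \comp F}$ is obtained by the same calculation with the roles of $F$ and $G$ reversed, the commutation argument still working because it is $X^G$ that ignores $\init$. The only real obstacle is notational bookkeeping around the various ampliations; once the identification $X^G = I_\init \ot Y$ is recorded, the algebraic manipulations are essentially formal.
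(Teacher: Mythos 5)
Your argument is correct and, for the first identity, identical in method to the paper's: exploit the factorisation $X^G = I_\init \ot Y$ (a consequence of $G = G_0 \ot I_\init$ and uniqueness in Theorem~\ref{thm: cocycle = qsde}) to commute $I_{\khat} \ot X^G_s$ past the ampliations of $F$ and $F\Delta$ in the quantum It\^o expansion of $X^F_t X^G_t$, then identify the result by uniqueness. Where you diverge is the second identity: the paper derives $X^G X^F = X^{G \comp F}$ by duality, writing $G \comp F = (F^* \comp G^*)^*$, applying the first identity to the pair $F^*, G^*$, and using $X^{H^*} = (X^H)^{\dual}$ together with anti-multiplicativity of the dual-cocycle map; you instead rerun the It\^o computation with the roles of the two processes swapped. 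Your route is valid and arguably more symmetric, but note that the commutation it uses is not the same one: one now needs the ampliations of $G = G_0 \ot I_\init$ and of $G\Delta$ to commute with $I_{\khat} \ot X^F_s$, which holds because $G$ acts only on the $\khat$ tensor factor while $I_{\khat} \ot X^F_s$ acts only on $\init \ot \Fock$ --- so strictly it is $G$ ignoring $\init$, not $X^G$, that does the work in the reversed calculation, even though both facts ultimately flow from $G$ having the form $G_0 \ot I_\init$. Your preliminary check that $F \comp G$ and $G \comp F$ land in $\mathfrak{qc}_{\beta_1+\beta_2}(\khat \ot \init)$ is correct and usefully fills in a step the paper leaves implicit.
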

\begin{proof}
 First note that, since $X^G$ is of the form
 $\big( I_\init \ot V_t \big)_{t\ges 0}$
 for a quasicontractive QS cocycle $V$ on $\Comp$,
 $$
( F \ot I_\Fock ) ( I_{\khat} \ot X^G_s )
=
F \ot V_s
=
 ( I_{\khat} \ot X^G_s ) ( F \ot I_\Fock )
\qquad (s \in \Rplus).
$$
 Therefore, by
 the quantum It\^o product formula~\eqref{eqn: QI},
 \begin{align*}
 \ud(X^F_t X^G_t)
 &=
 \big[
( I_{\khat} \ot X^F_t ) ( F \ot I_\Fock ) ( I_{\khat} \ot X^G_t )
  +
 ( I_{\khat} \ot X^F_t X^G_t ) ( G \ot I_\Fock )
\\
&
\qquad \qquad \qquad \qquad
+
 ( I_{\khat} \ot X^F_t ) ( F \ot I_\Fock )  ( I_{\khat} \ot X^G_t ) ( \Delta G \ot I_\Fock )
   \big]  \ud\Lambda_t
  \\
 &=
 X^F_t X^G_t \cdot (F \comp G) \ud\Lambda_t.
 \end{align*}
 Since $X^F_0 X^G_0 = I$
and
$F\comp G \in \Qqclg$,
 uniqueness for
strongly continuous bounded solutions of
the QS differential equation
 \[
 \ud X_t = X_t \cdot (F \comp G) \ud\Lambda_t, \quad X_0 = I_{\init \ot \Fock}
 \]
(see Theorem~\ref{thm: cocycle = qsde})
 implies that $X^F X^G = X^{F \comp G}$.
 Since $F^* \in \Qqclg$ and $G^* \in \QqclgC \ot I_\init$,
 it follows that $X^{F^* \comp G^*} = X^{F^*} X^{G^*}$ too.
 The second identity now follows by duality:
 \[
X^{ G \comp F}
=
X^{( F^* \comp G^* )^*}
=
\big( X^{ F^* \comp G^* } \big)^{\dual}
=
\big( X^{ F^*} X^{G^* } \big)^{\dual}
=
\big( X^{ G^* } \big)^{\dual} \ \big( X^{ F^* } \big)^{\dual}
=
X^G\, X^F.
 \]
\end{proof}

\begin{rems}
(i)
More generally,
if two quasicontractive QS cocycles \emph{commute on their initial space}
then
their (pointwise) product is also a cocycle.

(ii)
 In the light of the identities
 \[
 F_c \comp F_d = F_{c+d} -i \im \ip{c}{d} \Delta^\perp
 \text{ and }
 F_{-c} = F_c^*
 \qquad
 (c,d\in\noise),
 \]
 Proposition~\ref{propn: VW}
 contains Weyl commutation relations (\emph{see}~\cite{BrR}, or~\cite{L})
as a special case.
\end{rems}

The connection to quantum random walks mentioned in the introduction is as follows
(for details, in particular the precise meaning of  the terminology used,
see~\cite{BGL}).

\begin{thm}[\cite{BGL}]
For $i = 1,2$, let $F_i \in \Qqclg$ and
let $( G_i(h) )_{h > 0}$ be a family in $B( \khat \ot \init )$ satisfying
\begin{align*}
&
\sup \big\{
\norm{ G_i(h) }^n:
n \in \Nat, h > 0,
nh \les T
\big\}
< \infty
\qquad
(T \in \Rplus),
\ \text{ and }
\\
&
\big( h^{-1/2} \Delta^\perp + \Delta \big) \big( G_i(h) - \Delta^\perp \big) \big( h^{-1/2} \Delta^\perp + \Delta \big)
\to F_i + \Delta
\ \text{ as } h \to 0.
\end{align*}
Then, for all $\varphi \in B(\Fock)_*$ and $T \in \Rplus$,
\[
 \sup_{0 \les t \les T}
 \big\|
 \big( \id_{B(\init)} \otol \varphi \big)
 \big( \Xh_t - X^{ F_1 \comp F_2}_t \big)
 \big\|
 \to 0
 \ \text{ as } \
 h \to 0,
 \]
 where,
 for $h>0$,
  $( \Xh_t )_{t \ges 0}$ denotes the
 $h$-scale embedded left quantum random walk generated by $G_1(h)G_2(h)$.
\end{thm}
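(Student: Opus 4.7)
The approach is to invoke the [BGL] convergence theorem directly to the single family $G(h) := G_1(h)G_2(h)$, with limiting stochastic generator $F := F_1 \comp F_2$. The task is therefore to verify that this product family inherits the two displayed hypotheses from its factors, the only non-trivial point being that the series product $\comp$ replaces ordinary operator composition in the scaling limit. The norm-bound hypothesis is immediate from submultiplicativity: $\|G_1(h)G_2(h)\|^n \les \|G_1(h)\|^n \|G_2(h)\|^n$, so for each $T \in \Rplus$ the required uniform bound over $\{(n,h) : nh \les T\}$ is the product of those assumed for the two factor families.

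The scaling limit is a purely algebraic block-matrix computation. Writing $G_i(h) = \left[\begin{smallmatrix} A_i(h) & B_i(h) \\ C_i(h) & D_i(h) \end{smallmatrix}\right]$ with respect to the decomposition $\khat \ot \init = \init \op (\noise \ot \init)$, and $F_i = \left[\begin{smallmatrix} K_i & M_i \\ L_i & Q_i - I \end{smallmatrix}\right]$, the hypothesis for $G_i$ unpacks into the four convergences $h^{-1}(A_i(h) - I) \to K_i$, $h^{-1/2} B_i(h) \to M_i$, $h^{-1/2} C_i(h) \to L_i$, $D_i(h) \to Q_i$, whence also $A_i(h) \to I$ and $B_i(h), C_i(h) \to 0$ (at rate $h^{1/2}$). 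The block matrix $(h^{-1/2}\Delta^\perp + \Delta)(G_1(h)G_2(h) - \Delta^\perp)(h^{-1/2}\Delta^\perp + \Delta)$ is obtained by scaling the four entries of $G_1(h)G_2(h) - \Delta^\perp$ by $h^{-1}, h^{-1/2}, h^{-1/2}, 1$ respectively. Reassociating each scaled entry as a product of individually convergent scaled blocks and passing to the limit yields
\[
\begin{bmatrix}
K_1 + K_2 + M_1 L_2 & M_1 Q_2 + M_2 \\
L_1 + Q_1 L_2 & Q_1 Q_2
\end{bmatrix},
\]
which is precisely $F_1 \comp F_2 + \Delta$ by the formula for the series product in block matrix form displayed earlier in Section~\ref{section: quantum Ito algebra}.

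The one place that requires any genuine care is the $(1,1)$ entry, where a cross-term must be separated from the diagonal correction via the telescoping identity $A_1 A_2 - I = A_1 (A_2 - I) + (A_1 - I)$, giving $A_1(h) \cdot h^{-1}(A_2(h) - I) + h^{-1}(A_1(h) - I) + h^{-1/2} B_1(h) \cdot h^{-1/2} C_2(h) \to K_2 + K_1 + M_1 L_2$; the remaining three entries follow by direct factoring and continuity of multiplication on norm-bounded families. With both hypotheses now verified for $G_1(h) G_2(h)$ with limit $F_1 \comp F_2$, the [BGL] theorem applies verbatim and delivers the conclusion with $X^{F_1 \comp F_2}$ arising automatically as the limit cocycle.
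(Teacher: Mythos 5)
Your proof is correct and is the intended reduction. The paper attributes this theorem to \cite{BGL} without supplying a proof, so there is no in-paper argument to compare against; but the introduction explicitly flags the content of your verification --- that the series product on stochastic generators matches the ordinary operator product on random-walk generators under the $h$-scaling --- as the mechanism linking the two, so your route is precisely the one the author has in mind. Your two checks are both sound: the norm bound on $G_1(h)G_2(h)$ follows from submultiplicativity as you say, and the block-by-block scaling limit is a straightforward computation whose outcome is the block form of $F_1 \comp F_2 + \Delta$ displayed just before \eqref{eqn: F star F}; the telescoping $A_1A_2 - I = A_1(A_2 - I) + (A_1 - I)$ together with $A_1(h) \to I$ handles the $(1,1)$ entry, and norm convergence plus uniform boundedness of all blocks justifies passing to the limit in the remaining products. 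The only detail worth making explicit, which you leave implicit, is that $F_1 \comp F_2$ does lie in $\Qqclg$ so that the single-family BGL theorem is actually applicable with this limit; this is guaranteed by Proposition~\ref{propn: classes}(d), which records that $\qclg$ is a submonoid under $\comp$.
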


  \section{Product Formula}

 \label{section: product formula}

 For proving the quantum stochastic Lie--Trotter product formula it
 is convenient to define a constant associated with a pair of quantum
 stochastic generators.
 Thus, for
 $F_1, F_2 \in \Qblg$ with respective block matrix forms
 $\left[ \begin{smallmatrix}
  K_1 & M_1 \\ L_1  & N_1
 \end{smallmatrix} \right]$ and
  $\left[ \begin{smallmatrix}
  K_2 & M_2 \\ L_2  & N_2
 \end{smallmatrix} \right]$,
 set
 \begin{align*}
 \nonumber
 C(F_1, F_2)
 :&=
 \norm{\Delta^\perp F_1 \Delta^\perp} +
 \norm{\Delta^\perp F_2 \Delta^\perp} +
 \norm{\Delta^\perp F_1 \Delta} \cdot
 \norm{\Delta F_2 \Delta^\perp}
 \\
 &=
 \norm{K_1} + \norm{K_2} + \norm{M_1} \norm{L_2}.
 \end{align*}
 Note that this depends only on the first row of $F_1$ and first
 column of $F_2$:
 \[
  C(F_1, F_2) =  C(\Delta^\perp F_1, F_2 \Delta^\perp).
 \]

 \begin{propn}
 \label{propn: EV}
  Let
  $F_i
  \in \Qqclgbetai$
 with block matrix form
 $\left[ \begin{smallmatrix}
  K_i & M_i \\ L_i & N_i
  \end{smallmatrix} \right]$,
   for $i=1,2$.
  Then
  \[
 \big\|
  \Expectation \big[
  X^{F_1}_t X^{F_2}_t  - X^{F_1 \comp F_2}_t
  \big]
 \big\|
 \les t^2 \, e^{t(\beta_1 + \beta_2)} C(F_1, F_2)^2
 \qquad
 (t \in \Rplus).
  \]
  \end{propn}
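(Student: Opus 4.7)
The plan is to combine the quantum It\^o product formula \eqref{eqn: QI} with the vacuum-expectation identity \eqref{eqn: expect} in two rounds. Writing $V_s := X^{F_1}_s$, $W_s := X^{F_2}_s$ and $Y_s := X^{F_1 \comp F_2}_s$, a first application of \eqref{eqn: QI} yields $X^{F_1}_t X^{F_2}_t = I + \Lambda_t(H)$ with
\[
H_s = (I_\khat \ot V_s) F_1 (I_\khat \ot W_s) + (I_\khat \ot V_s W_s) F_2 + (I_\khat \ot V_s) F_1 \Delta (I_\khat \ot W_s) F_2,
\]
to be compared with $X^{F_1 \comp F_2}_t = I + \Lambda_t(H')$ for $H'_s = (I_\khat \ot Y_s)(F_1 \comp F_2)$. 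Applying \eqref{eqn: expect} reduces the estimate to the single integral $\int_0^t \Expectation[D_s]\, \ud s$, where a direct block-matrix calculation, using $[F_1 \Delta F_2]^{00} = M_1 L_2$, gives
\[
D_s := \Delta^\perp(H_s - H'_s)\Delta^\perp = V_s K_1 W_s + V_s W_s K_2 + V_s M_1 (I_\noise \ot W_s) L_2 - Y_s K_{12},
\]
with $K_{12} := K_1 + K_2 + M_1 L_2$ being the top-left block of $F_1 \comp F_2$.

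The crucial observation is that $D_0 = 0$: the constants cancel precisely because of the definition of the series product. To extract from this an $O(s)$ estimate for $\norm{\Expectation[D_s]}$, I apply the same scheme again, viewing each of the four summands of $D_s$ as a bounded QS process in $s$ whose initial value equals its constant part. Via the It\^o product formula, together with \eqref{eqn: under} to handle the constant pre- and post-multiplications by $K_i, M_1, L_2, K_{12}$, each such process equals its constant part plus a QS integral $\Lambda_s(\cdot)$. A further application of \eqref{eqn: expect} then gives
\[
\Expectation[D_s] = \int_0^s \Expectation[G_u]\, \ud u,
\]
where $G_u$ is the sum of the four resulting second-level $K$-blocks. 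Operator-norm estimates using quasicontractivity --- in particular $\norm{V_u} \les e^{\beta_1 u}$, $\norm{W_u} \les e^{\beta_2 u}$ and $\norm{Y_u} \les e^{(\beta_1 + \beta_2) u}$, the last following from the series-product inequality of Section~\ref{section: quantum Ito algebra} that ensures $\beta_0(F_1 \comp F_2) \les \beta_1 + \beta_2$ --- give $\norm{G_u} \les 2\, C(F_1, F_2)^2\, e^{(\beta_1 + \beta_2) u}$, after which the double integration, combined with the elementary inequality $2(e^x - 1 - x) \les x^2 e^x$ for $x \ges 0$, delivers the stated bound.

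The main obstacle is expected to lie in the third summand $V_s M_1 (I_\noise \ot W_s) L_2$, whose QS integrand is not of the standard form $(I \ot X_\cdot) F$ on account of the noise-space operators $M_1$ and $L_2$ sandwiching the cocycle $W_s$. Working in an orthonormal basis $\{c_k\}$ of $\noise$, with $L_2 v = \sum_k c_k \ot L_2^k v$ and $M_1(c_k \ot u) = M_1^k u$, one has $M_1(I_\noise \ot X) L_2 = \sum_k M_1^k X L_2^k$, so that \eqref{eqn: under} may be applied termwise; the most delicate resulting term, $V_u M_1 \sum_k (I_\noise \ot M_1^k W_u) L_2 L_2^k$, can be controlled in operator norm by $\norm{M_1}^2 \norm{L_2}^2 e^{(\beta_1 + \beta_2) u}$ via Cauchy--Schwarz together with the Parseval identity $\sum_k \norm{L_2^k v}^2 = \norm{L_2 v}^2$. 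Crucially, the repeated projection onto the $K$-block in this iteration eliminates contributions from the matrix entries $L_1, M_2, N_1, N_2$ of $F_1$ and $F_2$, accounting for the final dependence on only the specific combinations $\norm{K_1}, \norm{K_2}, \norm{M_1}\norm{L_2}$ in $C(F_1, F_2)$.
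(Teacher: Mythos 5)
Your strategy — two applications of the quantum It\^o product formula \eqref{eqn: QI} combined with the expectation identity \eqref{eqn: expect}, exploiting that the first-level integrand vanishes at $s=0$ because the constant parts cancel by definition of $\comp$ — is essentially the same double-iteration argument that the paper uses (your four-term integrand $D_s$ is just the paper's $Y^1_s + Y^2_s + Y^3_s$ grouped differently). However there is a genuine gap in the final step. The paper first normalises to the contraction case $\beta_1 = \beta_2 = 0$ via $e^{-t(\beta_1+\beta_2)}\big(X^{F_1}_t X^{F_2}_t - X^{F_1\comp F_2}_t\big) = X^{G_1}_t X^{G_2}_t - X^{G_1\comp G_2}_t$ with $G_i = F_i - \beta_i\Delta^\perp \in \Qclg$; with all cocycles contractive the double integral evaluates to exactly $t^2 C^2$, and the exponential prefactor appears for free on undoing the normalisation. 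You instead carry the growth factors $e^{\beta_i u}$ through both iterations, obtaining $\int_0^t\!\int_0^s 2C^2 e^{(\beta_1+\beta_2)u}\,\mathrm{d}u\,\mathrm{d}s = 2C^2\big(e^{\beta t}-1-\beta t\big)/\beta^2$ where $\beta := \beta_1+\beta_2$, and then invoke $2(e^x-1-x) \les x^2 e^x$. But this inequality only holds for $x \ges 0$: at $x=-1$ the left side is $2e^{-1}\approx 0.74$ while the right side is $e^{-1}\approx 0.37$. Since $F_i$ may well have $\beta_0(F_i) < 0$ (\emph{e.g.}\ strictly contractive cocycles with $K_i$ strictly dissipative), the proposition must cover $\beta_1+\beta_2 < 0$, and your argument does not reach the claimed bound $t^2 e^{t(\beta_1+\beta_2)}C(F_1,F_2)^2$ in that regime; it gives a larger estimate. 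The preliminary reduction to $\Qclg$ is precisely what avoids this difficulty and should be inserted at the outset.

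A smaller remark: the coordinate expansion of $M_1(I_\noise\ot W_s)L_2$ in an orthonormal basis of $\noise$ is unnecessary. The operators $M_1\ot I_\Fock$ and $L_2\ot I_\Fock$ are fixed (non-process) bounded operators and so may be moved through the QS integral directly via identity \eqref{eqn: under}, which is all the paper uses; the termwise version is correct but adds no content and introduces summability worries that \eqref{eqn: under} sidesteps.
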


 \begin{proof}
 Set $\Vone = X^{F_1}$, $\Vtwo = X^{F_2}$ and $V = X^{F_1 \comp F_2}$.
 Since
 \[
 e^{-t(\beta_1 + \beta_2)} (\Vone_t \Vtwo_t - V_t) =
 X^{G_1}_t X^{G_2}_t - X^{G_1 \comp G_2}_t
 \qquad
 (t \in \Rplus),
 \]
 where $G_i = F_i - \beta_i \Delta^\perp \in \Qclg$
 ($i = 1,2$),
 it suffices to assume that $\beta_1 = \beta_2 = 0$,
 so that
$F_1, F_2, F_1 \comp F_2  \in \Qclg$, and
the QS cocycles $\Vone$, $\Vtwo$ and $V$
are all contractive.

 Fix $t \in \Rplus$.
 By the quantum It\^o product formula~\eqref{eqn: QI},
 $\Vone \Vtwo - V = \Lambda_\cdot(H)$ where
 $H = \Hone + \Htwo + \Hthree$
for the processes given by
 \begin{align*}
 &\Hone_s :=
 (I_{\khat} \ot \Vone_s) (F_1 \ot I_\Fock)  (I_{\khat} \ot \Vtwo_s)
 -
  (I_{\khat} \ot V_s) (F_1 \ot I_\Fock)
 \\
 &\Htwo_s :=
 \big( I_{\khat} \ot ( \Vone_s \Vtwo_s - V_s ) \big)
 (F_2 \ot I_\Fock)
 \\
 &\Hthree_s :=
 (I_{\khat} \ot \Vone_s) (F_1 \Delta \ot I_\Fock)
 (I_{\khat} \ot \Vtwo_s) (\Delta F_2 \ot I_\Fock)
 -
  (I_{\khat} \ot V_s) (F_1 \Delta F_2 \ot I_\Fock).
 \end{align*}
Therefore,
by~\eqref{eqn: expect},
 $\Expect [ \Vone_t \Vtwo_t - V_t ] =
 \int_0^t \ud s \, \Expect [ Y_s ]$
where
$Y = \Yone + \Ytwo + \Ythree$
for the processes given by
 \begin{align*}
 &\Yone_s :=
 \Vone_s ( K_1 \ot I_\Fock)  \Vtwo_s
 -
   V_s (K_1 \ot I_\Fock)
 \\
 &\Ytwo_s :=
  ( \Vone_s \Vtwo_s - V_s )
 (K_2 \ot I_\Fock)
 \\
 &\Ythree_s :=
 \Vone_s  (M_1 \ot I_\Fock)
 (I_{\noise} \ot \Vtwo_s) ( L_2 \ot I_\Fock)
 -
  V_s  (M_1 L_2 \ot I_\Fock).
 \end{align*}
Note that,
since $\Vone$, $\Vtwo$ and $V$ are contractive,
\begin{equation}
 \label{eqn: EYr}
\norm{\Expect [ Y_r ] } \les 2 \big( \norm{K_1} + \norm{K_2} + \norm{M_1} \norm{L_2} \big)
=
2 C(F_1, F_2)
\qquad
(r \in \Rplus).
\end{equation}
In turn,
the quantum It\^o formula, together with
the identities~\eqref{eqn: expect} and~\eqref{eqn: under},
imply that, for $i = 1, 2, 3$ and $s \in \Rplus$,
$\Expect [ \Yi_s ]  =
 \int_0^s \ud r \, \Expect [ \Zi_r ]$
for processes given by
 \begin{align*}
 \Zone_r :=
\
&
 \Vone_r ( (K_1)^2 \ot I_\Fock ) \Vtwo_r - V_r ((K_1)^2 \ot I_\Fock)
+
 \\
&
 \Vone_r (K_1 \ot I_\Fock) \Vtwo_r ( K_2 \ot I_\Fock )
 -
 V_r (K_1 K_2 \ot I_\Fock)
+
 \\
 &
 \Vone_r (M_1 \ot I_\Fock)
 (I_{\noise} \ot K_1 \ot I_\Fock) (I_{\noise} \ot  \Vtwo_r) (L_2 \ot I_\Fock)
 -
 V_r (M_1 L_2 K_1 \ot I_\Fock),
 \\
 \Ztwo_r :=
\
&
 Y_r (K_2 \ot I_\Fock), \text{ and }
 \\
\Zthree_r :=
\
&
 \Vone_r
 (K_1 M_1 \ot I_\Fock) (I_\noise \ot \Vtwo_r ) (L_2 \ot I_\Fock) \Vtwo_r
 - V_r ( K_2 M_1 L_2 \ot I_\Fock )
+
\\
 &
 \Vone_r
 ( M_1 \ot I_\Fock) (I_\noise \ot \Vtwo_r )
 (I_{\noise} \ot K_2 \ot I_\Fock) (L_2 \ot I_\Fock)
  -
  V_r ( K_2 M_1 L_2 \ot I_\Fock ) +
 \\
 &
 \Vone_r (M_1 \ot I_\Fock)
 ( I_\noise \ot M_1 \ot I_\Fock)
 (I_{\noise \ot \noise} \ot \Vtwo_r)
 (I_\noise \ot L_2 \ot I_\Fock)
 (I_{\noise \ot \noise} \ot \Vtwo_r)
  ( L_2 \ot I_\Fock )
 \\
 & \quad
\qquad \qquad
  \qquad \qquad \qquad \qquad \qquad \qquad \qquad \qquad \qquad \qquad
 -
 V_r ((M_1 L_2)^2 \ot I_\Fock).
 \end{align*}
Now,
 by further use of the contractivity of the processes
$\Vone$, $\Vtwo$ and $V$, together with the estimate~\eqref{eqn: EYr},
 \begin{align*}
\norm{ \Expect [ \Zone_r ] }
\les
& 2
 \big(
 \norm{K_1}^2  +
 \norm{ K_1 } \norm{  K_2 } +
\norm{ M_1 } \norm{ K_1 } \norm{ L_2 }
\big)
=
2 \norm{ K_1} C(F_1, F_2),
\\
\norm{ \Expect [ \Ztwo_r ] } \les
& 2   C(F_1, F_2)
 \norm{K_2},
\ \text{ and }
\\
\norm{ \Expect [ \Zthree_r ] } \les
& 2
 \big(
\norm{K_1} \norm{ M_1 } \norm{ L_2 }
  +
\norm{ M_1 } \norm{ K_2 } \norm{ L_2 }
+
\norm{ M_1 }^2 \norm{ L_2 }^2
\big)
=
2 \norm{ M_1 } \norm{ L_2 } C(F_1, F_2),
  \end{align*}
and so
\begin{align*}
\norm{ \Expect [ \Vone_t \Vtwo_t - V_t ] }
&
=
\Big\| \int_0^t \! \ud s \, \int_0^s \! \ud r \, \Expect[\Zone_r + \Ztwo_r + \Zthree_r]\  \Big\|
\\
&
\les
\int_0^t \! \ud s \, \int_0^s \! \ud r \, 2 \, C(F_1, F_2)^2
=
t^2 \, C(F_1, F_2)^2,
\end{align*}
 as required.
\end{proof}

\begin{lemma}
 \label{lemma: wci}
  Let
  $F_1  \in \Qqclgbetaone$,
    $F_2  \in \Qqclgbetatwo$,
 $c', c \in \noise$
and
 $r, t \in \Rplus$ with $t > r$.
Then
  \begin{equation*}
 \big\|
 \Omega\big( c'_{[r,t[} ,  c_{[r,t[} \big)
 \big(
  X^{F_1}_{r,t} X^{F_2}_{r,t} - X^{F_1 \comp F_2}_{r,t}
  \big)
 \big\|
 \les ( t - r )^2 \, e^{( t - r )(\beta_1 + \beta_2)} \,
 C\big( F_{c'}^* \comp F_1, F_2 \comp F_c \big)^2.
  \end{equation*}
  \end{lemma}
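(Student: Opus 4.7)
The plan is to reduce the claim to Proposition~\ref{propn: EV} in two moves: a time-covariance reduction to $r = 0$, followed by the insertion of Fock--Weyl unitaries, which converts the slice $\Omega(c'_{[r,t[}, c_{[r,t[})$ into a vacuum expectation. For the first move, identity~\eqref{subeqn: QSEc} (extended to products of QS evolutions by the Remark following~\eqref{eqn: QSE}) gives
\[
\Omega\big(c'_{[r,t[}, c_{[r,t[}\big)\big(X^{F_1}_{r,t} X^{F_2}_{r,t} - X^{F_1 \comp F_2}_{r,t}\big)
=
\Omega\big(c'_{[0,s[}, c_{[0,s[}\big)\big(X^{F_1}_s X^{F_2}_s - X^{F_1 \comp F_2}_s\big),
\]
where $s := t - r$; so one may assume $r = 0$ throughout.

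For the second move, identity~\eqref{eqn: slices}, together with $W^c_s = I_\init \ot W(c_{[0,s[})$, yields $\Omega(c'_{[0,s[}, c_{[0,s[})(T) = \Expect\big[ (W^{c'}_s)^*\, T\, W^c_s \big]$. Since $(W^{c'}_s)^* = W^{-c'}_s = X^{F_{c'}^*}_s$ (using $W(g)^* = W(-g)$ and $F_c^* = F_{-c}$ from Example~\ref{examp: Weyl}), with $F_{c'}^*, F_c \in \QqclgC \ot I_\init$, applying Proposition~\ref{propn: VW} on each side, and invoking associativity of $\comp$, gives
\[
W^{-c'}_s X^{F_1}_s X^{F_2}_s W^c_s = X^{\wt{F}_1}_s X^{\wt{F}_2}_s
\ \text{ and } \
W^{-c'}_s X^{F_1 \comp F_2}_s W^c_s = X^{\wt{F}_1 \comp \wt{F}_2}_s,
\]
where $\wt{F}_1 := F_{c'}^* \comp F_1$ and $\wt{F}_2 := F_2 \comp F_c$. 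Taking vacuum expectations reduces the required estimate to one of the form already controlled by Proposition~\ref{propn: EV}.

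Since $X^{\wt{F}_1} = W^{-c'} X^{F_1}$ and $X^{\wt{F}_2} = X^{F_2} W^c$ arise from $X^{F_i}$ by multiplication with unitary (hence norm-preserving) Weyl cocycles, their exponential growth bounds are preserved, so that $\wt{F}_i \in \Qqclgbetai$ for $i = 1, 2$. Proposition~\ref{propn: EV} then delivers the claimed bound, since $C(\wt{F}_1, \wt{F}_2) = C(F_{c'}^* \comp F_1, F_2 \comp F_c)$ by definition. The only point of care is the identification of $\wt{F}_i$ with its correct class $\Qqclgbetai$; the cocycle argument just sketched is the cleanest route, but this can alternatively be checked algebraically from the $\Qqclg$-inequalities of Section~\ref{section: quantum Ito algebra} together with Wills' identity (Proposition~\ref{propn: wills id}).
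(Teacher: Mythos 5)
Your proposal is correct and follows essentially the same route as the paper's proof: time-covariance via identity~\eqref{subeqn: QSEc} to reduce to $r = 0$, then the Weyl-unitary reformulation of the slice map via~\eqref{eqn: slices}, Example~\ref{examp: Weyl} and Proposition~\ref{propn: VW} to absorb the Weyl cocycles into the generators, associativity of $\comp$, and finally Proposition~\ref{propn: EV}. The class-membership point you flag -- that $F_{c'}^* \comp F_1 \in \Qqclgbetaone$ and $F_2 \comp F_c \in \Qqclgbetatwo$ -- is left implicit in the paper, and your justification via preservation of the growth bound under multiplication by unitary Weyl cocycles is sound (one can also verify it algebraically from the relations preceding Proposition~\ref{propn: classes}, since $F_{\pm c} \in \Qulg$).
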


 \begin{proof}
By definition of the evolutions
$\big( X^G_{r,t} \big)_{0\les r \les t}$, for $G= F_1, F_2$ and $F_1 \comp F_2$,
and identity~\eqref{subeqn: QSEc} along with the remark following it,
\[
\LHS
=
\big\|
 \Omega\big( c'_{[0, t-r[} ,  c_{[0, t-r[} \big)
 \big(
  X^{F_1}_{t-r} X^{F_2}_{t-r} - X^{F_1 \comp F_2}_{t-r}
  \big)
 \big\|
\]
and so we may suppose without loss of generality that $r = 0$.
 In this case, by the identity~\eqref{eqn: slices},
Example~\ref{examp: Weyl}
 and the associativity of $\comp$,
 the estimate follows immediately from Propositions~\ref{propn: VW}
 and~\ref{propn: EV}.
 \end{proof}

Now let $\Step$ denote the subspace of
$L^2(\Rplus; \noise)$ consisting of step functions,
whose right-continuous versions we use when evaluating.
For $F_1, F_2 \in \Qblg$
and $g', g \in \Step$,
set
\[
C( g', F_1, F_2, g ) :=
\max \Big\{
 C\big( F_{c'}^* \comp F_1,\, F_2 \comp F_c \big):
 c' \in \Ran g', c \in \Ran g \Big\}.
\]

\begin{lemma}
 \label{lemma: wciplus}
  Let
  $F_1  \in \Qqclgbetaone$,
    $F_2  \in \Qqclgbetatwo$,
 $g', g \in \Step$
and
 $r, t \in \Rplus$ with $t > r$.
Then
  \begin{equation*}
 \big\|
 \Omega\big( g'_{[r,t[} ,  g_{[r,t[} \big)
 \big(
  X^{F_1}_{r,t} X^{F_2}_{r,t} - X^{F_1 \comp F_2}_{r,t}
  \big)
 \big\|
 \les ( t - r )^2 \, e^{( t - r )(\beta_1 + \beta_2)} \,
  C( g'_{[r,t[}, F_1, F_2, g_{[r,t[})^2.
  \end{equation*}
  \end{lemma}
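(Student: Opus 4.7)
The plan is to reduce to Lemma~\ref{lemma: wci} by choosing a common refinement of the partitions on which $g'$ and $g$ are constant, factorising both sides via the evolution identity~\eqref{subeqn: QSEd}, and then telescoping the resulting finite products.

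To carry this out, I would first choose a partition $r = t_0 < t_1 < \cdots < t_n = t$ on which $g'$ and $g$ take constant values $c'_k = g'|_{[t_{k-1}, t_k[}$ and $c_k = g|_{[t_{k-1}, t_k[}$; note that the values zero (taken on parts where either function vanishes) lie in $\Ran g'_{[r,t[}$ and $\Ran g_{[r,t[}$ respectively. Setting
\[
A_k := \Omega\big( c'_{k\,[t_{k-1},t_k[},\, c_{k\,[t_{k-1},t_k[} \big) \big( X^{F_1}_{t_{k-1},t_k} X^{F_2}_{t_{k-1},t_k} \big),
\qquad
B_k := \Omega\big( c'_{k\,[t_{k-1},t_k[},\, c_{k\,[t_{k-1},t_k[} \big) \big( X^{F_1 \comp F_2}_{t_{k-1},t_k} \big),
\]
the evolution identity~\eqref{subeqn: QSEd} and the remark following it (which allows one to apply the identity to products of QS evolutions) give, upon iteration,
\[
\Omega\big( g'_{[r,t[},\, g_{[r,t[} \big) \big( X^{F_1}_{r,t} X^{F_2}_{r,t} \big) = A_1 A_2 \cdots A_n
\text{ and }
\Omega\big( g'_{[r,t[},\, g_{[r,t[} \big) \big( X^{F_1 \comp F_2}_{r,t} \big) = B_1 B_2 \cdots B_n.
\]

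Next I would estimate the individual factors. From the series product inequality
$(G \comp F)^* \comp (G \comp F) \les F^* \comp F + 2 \beta \Delta^\perp$ for $G \in \qclgbeta$
(stated in the displayed relations following the identities after Proposition~\ref{propn: wills id}),
$F_1 \comp F_2 \in \Qqclg_{\beta_1 + \beta_2}$, whence by Theorem~\ref{thm: cocycle = qsde} $\| X^{F_1 \comp F_2}_{t_{k-1},t_k} \| \les e^{(t_k - t_{k-1})(\beta_1 + \beta_2)}$. Since $\Omega(\cdot,\cdot)$ is slicing by normalised exponentials and so norm-non-increasing, this yields $\| A_k \|, \| B_k \| \les e^{(t_k - t_{k-1})(\beta_1 + \beta_2)}$, while Lemma~\ref{lemma: wci} applied on $[t_{k-1}, t_k[$ gives
\[
\| A_k - B_k \| \les (t_k - t_{k-1})^2 e^{(t_k - t_{k-1})(\beta_1 + \beta_2)} C\big(g'_{[r,t[}, F_1, F_2, g_{[r,t[}\big)^2.
\]
A standard telescoping $A_1 \cdots A_n - B_1 \cdots B_n = \sum_{k=1}^n A_1 \cdots A_{k-1}(A_k - B_k) B_{k+1} \cdots B_n$, together with submultiplicativity, yields
\[
\big\| \Omega\big( g'_{[r,t[},\, g_{[r,t[} \big) \big( X^{F_1}_{r,t} X^{F_2}_{r,t} - X^{F_1 \comp F_2}_{r,t} \big) \big\|
\les e^{(t-r)(\beta_1 + \beta_2)} C\big(g'_{[r,t[}, F_1, F_2, g_{[r,t[}\big)^2 \sum_{k=1}^n (t_k - t_{k-1})^2,
\]
and the desired estimate follows from the elementary bound $\sum_k (t_k - t_{k-1})^2 \les \big( \sum_k (t_k - t_{k-1}) \big)^2 = (t - r)^2$.

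No step is genuinely difficult; the only non-routine checks are (i)~the growth bound $\beta_1 + \beta_2$ for $F_1 \comp F_2$, which rests on the series product inequality recalled above, and (ii)~the applicability of the evolution identity~\eqref{subeqn: QSEd} to the \emph{product} $X^{F_1}_{r,t} X^{F_2}_{r,t}$, which is guaranteed by the remark following~\eqref{eqn: QSE}.
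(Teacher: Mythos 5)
Your argument is essentially the same as the paper's: both proofs factorise the sliced difference over the subintervals on which $g'$ and $g$ are constant via the evolution identity, telescope the resulting finite products, apply Lemma~\ref{lemma: wci} on each subinterval, and invoke the bound $\sum_k (t_k - t_{k-1})^2 \les (t-r)^2$. The only cosmetic differences are the orientation of the telescoping identity and that the paper absorbs the norm bounds on the $A_k$ and $B_k$ factors into a single displayed chain of inequalities rather than stating them separately.
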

\begin{proof}
 Set $\beta := \beta_1 + \beta_2$
 and let
 $\{ r = t_0 < t_1 < \cdots < t_n < t_{n+1} = t \}$
 be such that $g$ and $g'$ are constant,
 say $c^i$ and $d^i$ respectively, on the interval $[ t_i, t_{i+1} [$,
 and,
  for $i = 0, \cdots , n$,
 set
 $\Omega_i := \Omega( g'_{[ t_i, t_{i+1} [}, g_{[ t_i, t_{i+1} [} )$
 \[
 X^1_i := X^{F_1}_{t_i, t_{i+1}},
 X^2_i := X^{F_2}_{t_i, t_{i+1}} \text{ and }
 X_i := X^{F}_{t_i, t_{i+1}},
 \]
 noting that
 $\Omega_i := \Omega( c^i_{[ t_i, t_{i+1} [}, d^i_{[ t_i, t_{i+1} [} )$.
 Then,
 again using identity~\eqref{subeqn: QSEd}
 and the subsequent remark,
 we see that
 $$
 \Omega\big( g'_{[r,t[} ,  g_{[r,t[} \big)
 \big(
  X^{F_1}_{r,t} X^{F_2}_{r,t} - X^{F_1 \comp F_2}_{r,t}
  \big)
=
 \prod_{0 \les i \les n}^{\longrightarrow} \Omega_i( X^1_i X^2_i )
 -
  \prod_{0 \les i \les n}^{\longrightarrow} \Omega_i( X_i ).
  $$
  Therefore, using Lemma~\ref{lemma: wci},
\begin{align*}
\LHS
=
&
\Big\|
\sum_{i=0}^n
\Big(
 \prod_{0 \les p < i}^{\longrightarrow} \Omega_p(X_p)
\ \
\Omega_i ( X^1_i X^2_i - X_i )
 \prod_{i < p \les n}^{\longrightarrow}
\Omega_p( X^1_p X^2_p )
\Big)
\Big\|
\\
\les
 &
\sum_{i=0}^n
e^{ \beta ( t_i - r ) }
\big\|
\Omega_i ( X^1_i X^2_i - X_i )
\big\|
e^{ \beta ( t - t_{i+1} ) }
\\
\les
&
e^{ \beta ( t - r ) }
\sum_{i=1}^n
 (t_{i+1} - t_i)^2
 C\big( F_{c^i}^* \comp F_1, F_2 \comp F_{d^i} \big)^2
 \les
\RHS.
\end{align*}
\end{proof}

By a \emph{partition} $\mathcal{P}$ of $\Rplus$ we mean
a sequence $(s_n)$ in $\Rplus$
which is strictly increasing and tends to infinity.
Where convenient we identify a partition with its set of terms.
For $S \subset \subset \Rplus$,
let $| S |$ denote the  mesh of $\{ 0 \} \cup S$,
that is, $\max \{ | t_{i} - t_{i-1}|: \, 1 \les i \les k \}$,
where
$S = \{ t_1 < \cdots < t_n \}$ and $t_0 := 0$.

 \begin{lemma}
 \label{lemma: wgfN}
  Let
  $F_1  \in \Qqclgbetaone$,
    $F_2  \in \Qqclgbetatwo$,
 $g', g \in \Step$
and
 $r, t \in \Rplus$ with $t > r$.
 Let
 $\mathcal{P}$ be a partition of $\Rplus$,
  let
 $\{ t_1 < \cdots < t_{N-1} \} = \mathcal{P}\, \cap \ ]r,t[$
and set $t_0 := r$ and $t_N := t$.
Then
  \begin{multline*}
 \Big\|
 \Omega\big( g'_{[r,t[}, g_{[r,t[} \big)
 \Big(
\prod_{1 \les j \les N}^{\longrightarrow}
  X^{F_1}_{t_{j-1},t_j} X^{F_2}_{t_{j-1},t_j}  -
  X^{F_1 \comp F_2}_{r,t}
  \Big)
 \Big\|
 \\
 \les  \big| \mathcal{P} \cap [r,t] \big|
 \, (t-r)\, e^{(t-r)(\beta_1 + \beta_2)} \,
 C( g'_{[r,t[}, F_1, F_2, g_{[r,t[})^2.
  \end{multline*}
    \end{lemma}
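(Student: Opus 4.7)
The plan is to telescope the $N$-fold product across the partition $\{t_0 < \cdots < t_N\}$, reducing the estimate to an application of Lemma~\ref{lemma: wciplus} on each sub-interval $[t_{j-1}, t_j[$.

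First, by the evolution identity~\eqref{subeqn: QSEd} together with the subsequent remark extending it to products of QS evolutions, the slice $\Omega(g'_{[r,t[}, g_{[r,t[})$ distributes as a time-ordered product of the sub-slices $\Omega_j := \Omega(g'_{[t_{j-1},t_j[}, g_{[t_{j-1},t_j[})$ when applied to $\prod_{j=1}^N X^{F_1}_{t_{j-1},t_j} X^{F_2}_{t_{j-1},t_j}$; the same factorisation holds for $X^{F_1\comp F_2}_{r,t}$ via its own evolution property. Thus, setting $A_j := \Omega_j\big(X^{F_1}_{t_{j-1},t_j} X^{F_2}_{t_{j-1},t_j}\big)$ and $B_j := \Omega_j\big(X^{F_1\comp F_2}_{t_{j-1},t_j}\big)$, the quantity on the left-hand side is exactly $\big\|\prod_{j=1}^N A_j - \prod_{j=1}^N B_j\big\|$.

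Next, apply the standard telescoping identity
\begin{equation*}
\prod_{j=1}^N A_j - \prod_{j=1}^N B_j = \sum_{j=1}^N \Big(\prod_{i<j} B_i\Big)\,(A_j - B_j)\,\Big(\prod_{i>j} A_i\Big),
\end{equation*}
and bound each summand using (i) contractivity of the slices $\Omega_j$ against normalised exponential vectors combined with quasicontractivity of the three cocycles (noting $\beta_0(F_1\comp F_2) \les \beta_1+\beta_2$, an immediate consequence of the composition inequality for $\qclgbeta$ derived in Section~\ref{section: quantum Ito algebra}), which yields $\|A_j\|, \|B_j\| \les e^{(t_j-t_{j-1})(\beta_1+\beta_2)}$, and (ii) Lemma~\ref{lemma: wciplus} applied on $[t_{j-1}, t_j[$. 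The resulting local constant $C(g'_{[t_{j-1},t_j[}, F_1, F_2, g_{[t_{j-1},t_j[})$ is dominated by the global $C := C(g'_{[r,t[}, F_1, F_2, g_{[r,t[})$, since restricting a step function only shrinks its range (with the value $0$ contributing harmlessly, as $F_0 = 0$). Collecting the exponential factors yields a uniform $e^{(t-r)(\beta_1+\beta_2)}$ per summand, together with $(t_j-t_{j-1})^2\, C^2$.

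The endgame is the elementary estimate $\sum_{j=1}^N (t_j - t_{j-1})^2 \les |\mathcal{P}\cap[r,t]|\cdot(t-r)$, giving the claimed bound. The main conceptual ingredient is the slice-factorisation in the first step, but this is granted by the remark following equations~\eqref{eqn: QSE}; verifying $\beta_0(F_1\comp F_2) \les \beta_1+\beta_2$ is a short algebraic check using the Wills-style identities already recorded. No further obstacle is anticipated.
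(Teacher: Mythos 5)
Your proposal is correct and follows essentially the same route as the paper's proof: factorise the slice map $\Omega(g'_{[r,t[},g_{[r,t[})$ over the partition via the evolution identity, telescope the $N$-fold product, bound each summand by combining the quasicontractivity estimate $\|A_j\|,\|B_j\| \les e^{(t_j-t_{j-1})(\beta_1+\beta_2)}$ (which requires $\beta_0(F_1\comp F_2)\les\beta_1+\beta_2$, as you note) with Lemma~\ref{lemma: wciplus} on $[t_{j-1},t_j[$, and finish with $\sum_j(t_j-t_{j-1})^2\les|\mathcal{P}\cap[r,t]|\,(t-r)$. Your observation that the local constant $C(g'_{[t_{j-1},t_j[},F_1,F_2,g_{[t_{j-1},t_j[})$ is dominated by the global one because restricting a step function shrinks its range is a detail the paper leaves implicit but is correctly justified.
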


 \begin{proof}
 Set $\beta := \beta_1 + \beta_2$ and,
for $j = 1, \cdots , N$, set
$\Omega_j :=  \Omega\big( g'_{[t_{j-1}, t_j[}, g_{[t_{j-1}, t_j[} \big)$,
\[
X^1_j := X_{t_{j-1}, t_j}^{F_1}, \
X^2_j := X_{t_{j-1}, t_j}^{F_1} \ \text{ and } \
X_j := X^{F_1 \comp F_2}_{t_{j-1}, t_j}.
\]
Then, using  Lemma~\ref{lemma: wciplus}
(and arguing as in its proof), we see that
\begin{align*}
\LHS
=
&
\Big\|
\sum_{j=1}^N
\Big(
 \prod_{1 \les n < j}^{\longrightarrow} \Omega_n(X_n)
\ \
\Omega_j ( X^1_j X^2_j - X_j )
 \prod_{j < n \les N}^{\longrightarrow}
\Omega_n( X^1_n X^2_n )
\Big)
\Big\|
\\
\les
 &
\sum_{j=1}^N
e^{ \beta ( t_{j-1} - r ) }
\big\|
\Omega_j ( X^1_j X^2_j - X_j )
\big\|
e^{ \beta ( t - t_{j+1} ) }
\\
\les
&
e^{ \beta ( t - r ) }
\sum_{j=1}^N
 (t_j - t_{j-1})^2
 C( g'_{[t_{j-1}, t_j[}, F_1, F_2, g_{[t_{j-1}, t_j[})^2
\\
\les
&
e^{ \beta ( t - r ) }
  \big| \mathcal{P} \cap [r,t] \big| \, (t-r)\, C( g'_{[r,t[}, F_1, F_2, g_{[r,t[})^2,
\end{align*}
as required.
\end{proof}

 We may now prove the main result.

 \begin{thm}
 \label{thm: the theorem}
 Let $V^1$ and $V^2$  be quasicontractive elementary QS
 cocycles on $\init$ with noise dimension space
 $\noise$
 and respective stochastic generators $F_1$ and $F_2$,
and let
$T \in \Rplus$ and $\varphi \in B(\Fock)_*$.
Then
\[
 \sup_{0 \les r \les t \les T}
 \big\|
 \big( \id_{B(\init)} \otol \varphi \big)
 \big( V^{ \mathcal{P}, 1 \comp 2 }_{r,t} - X^{ F_1 \comp F_2}_{r,t} \big)
 \big\|
 \to 0
 \ \text{ as } \
 \big| \mathcal{P} \cap [0,T] \big| \to 0.
 \]
The notation here is as follows.
For
 a partition $\mathcal{P}$ of $\Rplus$ and $0\les r \les t$,
 \[
V^{ \mathcal{P}, 1 \comp 2 }_{r,t} :=
 \prod_{1 \les j \les N}^{\longrightarrow}
 V^1_{t_{j-1},t_j} V^2_{t_{j-1},t_j}
 \]
in which
$ \{ t_1 < \cdots < t_{N-1} \} =  \mathcal{P}\, \cap \ ]r,t[$, $t_0 := r$ and $t_N := t$.
 \end{thm}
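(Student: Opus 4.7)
The plan is to bootstrap Lemma~\ref{lemma: wgfN} from step-function slice maps $\Omega(g', g) = \id_{B(\init)} \otol \omega_{\vp(g'), \vp(g)}$ to arbitrary $\varphi \in B(\Fock)_*$ by a uniform-boundedness plus density argument. Writing $D_{\mathcal{P}, r, t} := V^{\mathcal{P}, 1 \comp 2}_{r,t} - X^{F_1 \comp F_2}_{r,t}$ and noting that $V^i = X^{F_i}$ by Theorem~\ref{thm: cocycle = qsde}, the operator $D_{\mathcal{P}, r, t}$ is precisely the object controlled in Lemma~\ref{lemma: wgfN}.

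The first step is a uniform operator-norm bound on $D_{\mathcal{P}, r, t}$. Quasicontractivity gives $\|V^i_{r,t}\| = \|V^i_{t-r}\| \les e^{\beta_i(t-r)}$, so telescoping over the partition yields $\|V^{\mathcal{P}, 1 \comp 2}_{r,t}\| \les e^{(\beta_1 + \beta_2)(t-r)}$, while the pre-Proposition~\ref{propn: classes} estimate $(G \comp F)^* \comp (G \comp F) \les F^* \comp F + 2\beta \Delta^\perp$ (with $F = F_2$, $G = F_1$, $\beta = \beta_1$) forces $\beta_0(F_1 \comp F_2) \les \beta_1 + \beta_2$, so $\|X^{F_1 \comp F_2}_{r,t}\| \les e^{(\beta_1 + \beta_2)(t-r)}$ too. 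Thus $M_T := \sup \{ \|D_{\mathcal{P}, r, t}\|: \mathcal{P}, \, 0 \les r \les t \les T \} \les 2 e^{(\beta_1+\beta_2)T}$.

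The second step handles $\varphi$ of the form $\omega_{\vp(g'), \vp(g)}$ with $g', g \in \Step$. Biadaptedness of $D_{\mathcal{P}, r, t}$ to $\Fock_{[r,t[}$ (inherited from the analogous property for each factor $V^i_{t_{j-1}, t_j}$ and for QS evolutions), combined with the tensor factorisation of normalised exponential vectors across $\Fock = \Fock_{[0,r[} \ot \Fock_{[r,t[} \ot \Fock_{[t,\infty[}$, yields
\[
(\id \otol \omega_{\vp(g'), \vp(g)})(D_{\mathcal{P}, r, t}) = c(g', g; r, t) \cdot \Omega(g'_{[r,t[}, g_{[r,t[})(D_{\mathcal{P}, r, t})
\]
for a scalar $c$ of modulus at most $1$. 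Lemma~\ref{lemma: wgfN} then dominates the norm of the right-hand side by a constant times $|\mathcal{P} \cap [0, T]|$, the constant being finite because $g, g'$ have finite range. By linearity this uniform-in-$(r,t)$ convergence to $0$ extends to all $\varphi$ in the linear span $\Sigma := \Lin\{\omega_{\vp(g'), \vp(g)}: g', g \in \Step\}$.

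To finish, $\Sigma$ is norm-dense in $B(\Fock)_*$: since $\Step$ is dense in $\Kil$ and $\vp$ is continuous, $\Lin\{\vp(g): g \in \Step\}$ is dense in $\Fock$, whence the rank-one decomposition of trace-class operators together with $\|\omega_{\xi, \eta} - \omega_{\xi', \eta'}\| \les \|\xi - \xi'\|\,\|\eta\| + \|\xi'\|\,\|\eta - \eta'\|$ gives density of $\Sigma$. Given $\varphi$ and $\ve > 0$, pick $\psi \in \Sigma$ with $\|\varphi - \psi\| < \ve/(2 M_T)$; then
\[
\|(\id \otol \varphi)(D_{\mathcal{P}, r, t})\| \les \|\varphi - \psi\| \, M_T + \|(\id \otol \psi)(D_{\mathcal{P}, r, t})\| \les \ve/2 + \|(\id \otol \psi)(D_{\mathcal{P}, r, t})\|,
\]
and the last term is $\les \ve/2$ uniformly in $r, t \in [0, T]$ once $|\mathcal{P} \cap [0, T]|$ is sufficiently small. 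The main delicate point is the biadaptedness/factorisation bookkeeping in the third paragraph, reducing a general exponential matrix element to the restricted slice appearing in Lemma~\ref{lemma: wgfN}; the remaining ingredients are routine.
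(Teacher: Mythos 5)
Your argument follows the paper's own proof essentially verbatim: reduce to $\varphi = \omega_{\vp(g'),\vp(g)}$ with $g',g\in\Step$ via totality in $B(\Fock)_*$ together with a uniform bound on $\|V^{\mathcal{P},1\comp2}_{r,t}-X^{F_1\comp F_2}_{r,t}\|$ over $[r,t]\subset[0,T]$, then invoke Lemma~\ref{lemma: wgfN}; your spelling out of the biadaptedness/exponential-factorisation step (giving the unimodular scalar $c$ relating $\id\otol\omega_{\vp(g'),\vp(g)}$ to $\Omega(g'_{[r,t[},g_{[r,t[})$) is just the detail the paper leaves implicit in citing that lemma. One small slip: the uniform bound should be $M_T\les 2e^{T\max\{0,\beta_1+\beta_2\}}$ rather than $2e^{(\beta_1+\beta_2)T}$, since when $\beta_1+\beta_2<0$ the supremum of $e^{(\beta_1+\beta_2)(t-r)}$ over $0\les r\les t\les T$ is $1$, not $e^{(\beta_1+\beta_2)T}$ --- though this does not affect the argument, which only needs $M_T<\infty$.
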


 \begin{proof}
 Set
$\beta := \beta_0(V^1) + \beta_0(V^2) $
and
$V := X^{ F_1 \comp F_2}$.
We may suppose without loss of generality that
$\varphi$ is of the form $\omega_{\vp(h'), \vp(h)}$
for some $h', h \in \Step$, since such functionals are total in
the Banach space $ B(\Fock)_*$
and
$
( V^{ \mathcal{P}, 1 \comp 2 }_{r,t} - V_{r,t} )
$
is uniformly bounded by $2 e^{T \max\{ 0, \beta \} }$
for $[r, t] \subset [0, T]$.

 For any partition $\mathcal{P}$ of $\Rplus$,
 Lemma~\ref{lemma: wgfN} implies that
there is a constant $C = C(\varphi, F_1, F_2)$
such that,
 for all
 subintervals $[r,t]$ of $[0,T]$,
  \[
 \big\|
 \big( \id_{B(\init)} \otol \varphi \big)
 \big( V^{ \mathcal{P}, 1 \comp 2 }_{r,t} -
 V_{r,t} \big)
 \big\|
 \les
 \big| \mathcal{P} \cap [0,T] \big| \, T \,
 e^{T \beta }\, C^2.
\]
 The result follows.
\end{proof}

Proposition~\ref{propn: EV} and
Lemmas~\ref{lemma: wci},~\ref{lemma: wciplus} and~\ref{lemma: wgfN}
extend to the case of QS generators
$F_i = \left[ \begin{smallmatrix} K_i & M_i \\ L_i & N_i  \end{smallmatrix} \right]$
($i = 1, \cdots , n$),
the constant $C(F_1, F_2)$ being replaced by
\[
\Delta^\perp
f_1
\comp
\big( f_2 \comp \cdots \comp f_{n-1} \big) \comp
f_n \,
\Delta^\perp
\]
where
\[
f_i = \begin{bmatrix} \| K_i \| & \| M_i \| \\ \| L_i \| & \| N_i \| \end{bmatrix}
\in B( \wh{\Comp} )
\ \text{ and } \
\Delta^\perp =
\Delta^\perp_{\Comp} = \begin{bmatrix} 1 & 0 \\ 0 & 0 \end{bmatrix}
\in B( \wh{\Comp} ).
\]
In this case the quantum It\^o product formula allows
$X^{F_1}_t \cdots X^{F_n}_t - I$
to be expressed as a sum of $2^{n} - 1$ QS integrals,
each of which may be paired with one of the $2^n - 1$ terms arising from
the expansion of $F_1 \comp \cdots \comp F_n$ in
the identity
\[
X^{F_1 \comp \cdots \comp F_n}_t - I
=
\int_0^t
X^{F_1 \comp \cdots \comp F_n}_s
\cdot
\big( F_1 \comp \cdots \comp F_n \big) \ud \Lambda_s.
\]
As a consequence,
Theorem~\ref{thm: the theorem} extends to
quasicontractive elementary QS cocycles $V^1, \cdots , V^n$,
with respective stochastic generators $F_1, \cdots , F_n$,
as follows:
\[
 \sup_{0 \les r \les t \les T}
 \big\|
 \big( \id_{B(\init)} \otol \varphi \big)
 \big( V^{ \mathcal{P}, 1 \comp \cdots \comp n }_{r,t} - X^{ F_1 \comp \cdots \comp F_n}_{r,t} \big)
 \big\|
 \to 0
 \ \text{ as } \
 \big| \mathcal{P} \cap [0,T] \big| \to 0,
 \]
 in the notation
 \[
V^{ \mathcal{P}, 1 \comp \cdots \comp n }_{r,t} :=
 \prod_{1 \les j \les N}^{\longrightarrow}
V^1_{t_{j-1},t_j} \cdots V^n_{t_{j-1},t_j}.
 \]

 \begin{rems}
(i)
 Note that the QS cocycle $V =  X^{ F_1 \comp \cdots \comp F_n}$ is
 contractive (respectively, isometric, or coisometric)
 provided that all of $V^1, \cdots , V^n$ are.

(ii)
  Whilst the convergence of Trotter products holds in the above
  hybrid norm-ultraweak topology,
 if
  $V$ is isometric and $V^1, \cdots , V^n$ are contractive
  (in particular, if $V^1, \cdots , V^n$ are isometric)
  then convergence also holds in the strong operator topology:
  \[
 \sup_{0 \les r \les t \les T}
 \big\|
 \big(
V^{ \mathcal{P}, 1 \comp \cdots \comp n }_{r,t} -
 X_{r,t}^{F_1 \comp \cdots \comp F_n}
\big)\xi \big\|  \to 0
 \ \text{ as } \
 \big|\mathcal{P} \cap [0,T] \big| \to 0
 \quad
 ( T \in \Rplus, \xi \in \init \ot \Fock ).
  \]
  This follows from the uniform continuity of the function
  \[
  [0, T]^2_{\les} \to \init \ot \Fock, \quad
  (r,t) \mapsto V_{r,t} \xi
  \qquad
  (\xi \in \init \ot \Fock, T \in \Rplus).
  \]
 \end{rems}

 We now revisit the case of QS cocycles with independent driving noise
 and show how to view it as a special case of the above theorem.
 Suppose therefore that the noise dimension space has orthogonal
 decomposition $\noise = \noise_1 \op \cdots \op \noise_n$.
 Denoting the symmetric Fock
 space over $L^2(\Rplus; \noise_i)$ by $\Fock^i$,
 second quantisation of the natural isometry
 from $L^2(\Rplus; \noise_i)$ to $L^2(\Rplus; \noise)$
 (see~\eqref{eqn: second quant}),
followed by ampliation, gives an isometry
 $\init \ot \Fock^i \to \init \ot \Fock$
 which in turn induces a normal *-algebra monomorphism
 \[
 \Upsilon^i: B(\init \ot \Fock^i) \to B(\init \ot \Fock)
\qquad
 (i=1, \cdots , n).
 \]

 \begin{cor}[\cite{LS L-T}]
 \label{cor: LS L-T}
 For $i=1, \cdots , n$,
 let $V^i$ be a quasicontractive elementary QS
 cocycle on $\init$ with noise dimension space
$\noise_i$
and stochastic generator $F_i$.
 Then,
 for all $T \in \Rplus$ and $\varphi \in B(\Fock)_*$,
\[
 \sup_{0 \les r \les t \les T}
 \big\|
 \big( \id_{B(\init)} \otol \varphi \big)
 \big( V^{ \mathcal{P}, 1 \boxplus \, \cdots \, \boxplus n }_{r,t} -
 X^{F_1 \boxplus \, \cdots \, \boxplus F_n}_{r,t} \big)
 \big\|
 \to 0
 \ \text{ as } \
 \big| \mathcal{P} \cap [0,T] \big| \to 0.
 \]
 The notation here is as follows.
For
 a partition $\mathcal{P}$ of $\Rplus$ and $0\les r \les t$,
 setting
  $\{ t_1 < \cdots < t_{N-1} \} = \mathcal{P}\, \cap \ ]r,t[$,
$t_0 := r$ and $t_N := t$,
 \[
V^{ \mathcal{P}, 1 \boxplus \, \cdots \, \boxplus n }_{r,t} :=
 \prod_{1 \les j \les N}^{\longrightarrow}
 \Upsilonone ( V^1_{t_{j-1},t_j} ) \cdots \Upsilonen ( V^n_{t_{j-1},t_j} ),
 \]
 \end{cor}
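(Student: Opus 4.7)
The plan is to reduce the corollary to the $n$-fold version of Theorem~\ref{thm: the theorem} (mentioned in the remarks following its proof). The crux is that, when the noise summands $\noise_1, \ldots, \noise_n$ are pairwise orthogonal, the $n$-fold concatenation product coincides with the $n$-fold series product applied to generators embedded in the full noise dimension space, extending the two-fold identity~\eqref{eqn: effectively}.

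For each $i$, let $\iota_i$ denote the embedding of the generator space over $\wh{\noise_i} \ot \init$ into $\Qqclg$ obtained by placing the noise-block in the $i$-th slot of the decomposition $\noise = \noise_1 \op \cdots \op \noise_n$; this generalises the two-fold embeddings $\iota, \iota'$ of~\eqref{eqn: iota con}. I would first check that $\widetilde{V^i} := \big( \Upsilon^i(V^i_t) \big)_{t \ges 0}$ is itself a quasicontractive elementary QS cocycle on $\init$, now with noise dimension space $\noise$, whose stochastic generator is $\iota_i(F_i)$. The cocycle identity transfers because $\Upsilon^i$ is a normal $*$-homomorphism that intertwines the shift on $\Fock^i$ with the restriction of the shift on $\Fock$ to the $i$-th tensor factor; the expectation semigroup is preserved since the canonical isometry $L^2(\Rplus; \noise_i) \hookrightarrow L^2(\Rplus; \noise)$ sends vacuum to vacuum; and the generator is identified by ampliating the QS differential equation satisfied by $V^i$. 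Secondly, the pairwise orthogonality of the noise summands gives $\iota_i(F_i) \, \Delta \, \iota_j(F_j) = 0$ for $i \neq j$, so by iteration of~\eqref{eqn: effectively},
\[
 \iota_1(F_1) \comp \cdots \comp \iota_n(F_n) = F_1 \boxplus \cdots \boxplus F_n,
\]
and consequently $X^{F_1 \boxplus \cdots \boxplus F_n} = X^{\iota_1(F_1) \comp \cdots \comp \iota_n(F_n)}$.

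With these identifications the partition-products in the corollary coincide precisely with those to which the $n$-fold extension of Theorem~\ref{thm: the theorem} applies, since for the two-parameter evolutions $\Upsilon^i(V^i_{r,t}) = \widetilde{V^i}_{r,t}$ as well. The theorem then delivers the desired convergence
\[
 \sup_{0 \les r \les t \les T}
 \big\| (\id_{B(\init)} \otol \varphi) \big( V^{\mathcal{P}, 1 \boxplus \cdots \boxplus n}_{r,t} - X^{F_1 \boxplus \cdots \boxplus F_n}_{r,t} \big) \big\| \to 0
\]
as $|\mathcal{P} \cap [0,T]| \to 0$. The main obstacle is the bookkeeping in the first step: verifying carefully that ampliation via $\Upsilon^i$ of a QS cocycle with the smaller noise dimension space $\noise_i$ produces a genuine QS cocycle with the larger noise dimension space $\noise$, and that its stochastic generator transforms correctly by $\iota_i$. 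Once this translation is in place, the remaining algebra is routine.
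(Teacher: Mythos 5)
Your proposal is correct and takes essentially the same route as the paper: the paper likewise reduces to the $n$-fold version of Theorem~\ref{thm: the theorem} by observing that $\Upsilon^i(V^i_{s,s'}) = X^{\iota_i(F_i)}_{s,s'}$ and that the orthogonality of the $\noise_i$ gives $\iota_1(F_1) \comp \cdots \comp \iota_n(F_n) = F_1 \boxplus \cdots \boxplus F_n$ (extending identity~\eqref{eqn: effectively}). The only difference is cosmetic: the paper writes out the $n=2$ case and asserts the general case ``follows similarly'', whereas you state the $n$-fold embeddings and vanishing of cross-terms $\iota_i(F_i)\,\Delta\,\iota_j(F_j) = 0$ directly.
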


 \begin{proof}
 Suppose first that $n=2$.
 Note that,
in the notation~\eqref{eqn: iota con},
 $\Upsilonone( V^1_{s,s'} ) = X_{s,s'}^{\iota(F_1)}$ and
 $\Upsilontwo( V^2_{s,s'} ) = X_{s,s'}^{\iota'(F_2)}$ ($s,s'\in\Rplus, s<s'$).
Therefore,
in view of identity~\eqref{eqn: effectively},
 for this case
 the corollary follows immediately from
 Theorem~\ref{thm: the theorem}.
 The general case follows similarly via
 obvious extension of the notation~\eqref{eqn: iota con} and corresponding identity~\eqref{eqn: effectively}.
 \end{proof}



 \section{Maximal Gaussian component of a QS generator}
 \label{section: gaussian}

In this section it is shown that every QS generator $F \in \Qqclg$ enjoys a unique decomposition
$F_1 \boxplus F_2$ in which $F_2$ is `pure Gaussian' and $F_1$ is `wholly non-Gaussian', in senses to be defined below.
This amounts to extracting a maximal Gaussian component of the generator and
demonstrating its uniqueness.
It chimes well with the way that Hunt's formula
decomposes
the generator of a L\'evy process on a compact Lie group
into
a maximal Gaussian component and a jump part (albeit not exactly uniquely).

To each contraction $C \in B(\noise \ot \init)$ we associate the following closed subspaces of $\noise$:
\[
\noise^C_{\Gauss} :=
\big\{
c \in \noise: (C - I) E_c = 0
\big\},
\text{ where }
E_c :=  \ket{c} \ot I_\init,
\]
and
\[
\noise^C_{\Pres} :=
\Linbar \,
\big\{
( I_\noise \ot \bra{u} ) \xi:
\xi \in \Ran (C - I)^*, u \in \init
\big\}.
\]
The former captures the subspace of all `directions' in which $C$ acts as the identity operator;
the latter is complementary (see below).
The subscripts denote `Gaussian' and `preservation' parts.

Letting $J^C_{\Pres}$ denote the inclusion $\noise^C_{\Pres} \to \noise$, set
\[
C_{\Pres} :=
( J^C_{\Pres} \ot I_\init )^* C ( J^C_{\Pres} \ot I_\init ),
\]
the compression of $C$ to $\noise^C_{\Pres} \ot \init$.

\begin{lemma}
\label{lemma: kCNG}
Let $C \in B(\noise \ot \init )$ be a contraction.
Then the following hold\tu{:}
\begin{alist}
\item
$\noise =  \noise^C_{\Pres} \op \noise^C_{\Gauss}$.
\item
$C$ has block matrix form
$\left[ \begin{smallmatrix} C_{\Pres} & 0 \\ 0 & I^C_{\Gauss}\end{smallmatrix} \right]$,
where
$I^C_{\Gauss} := I_{ \noise^C_{\Gauss}  \ot \init}$.
\item
$( \noise^C_{\Pres} )_{\Pres}^{C_{\Pres}} = \noise^C_{\Pres}$.
\end{alist}
\end{lemma}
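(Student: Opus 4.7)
The plan is to exploit the basic adjointness identity
$\ip{c}{(I_\noise \ot \bra{u})\xi} = \ip{E_c u}{\xi}$ (valid for $c \in \noise$, $u \in \init$, $\xi \in \noise \ot \init$), which translates the definitions of $\noise^C_{\Pres}$ and $\noise^C_{\Gauss}$ into each other. Combined with the standard contraction-theoretic fact that $\Ker(C - I) = \Ker(C - I)^*$ when $C$ is a contraction (obtained from $\| (C^* - I)\xi \|^2 = \|C^* \xi\|^2 - \|\xi\|^2 \les 0$ whenever $C\xi = \xi$), the three parts fall out by direct computation.

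For part~(a), I would compute the orthogonal complement of $\noise^C_{\Pres}$. A vector $c \in \noise$ is orthogonal to $\noise^C_{\Pres}$ iff, for all $u \in \init$ and $\eta \in \noise \ot \init$,
\[
0 = \bip{c}{(I_\noise \ot \bra{u})(C - I)^* \eta} = \ip{E_c u}{(C - I)^* \eta} = \ip{(C - I) E_c u}{\eta};
\]
taking $\eta$ arbitrary and then $u$ arbitrary, this is equivalent to $(C-I) E_c = 0$, i.e.\ $c \in \noise^C_{\Gauss}$. Thus $\noise^C_{\Gauss} = (\noise^C_{\Pres})^\perp$, which gives the orthogonal decomposition since $\noise^C_{\Pres}$ is closed by definition.

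For part~(b), I would first observe that $\noise^C_{\Gauss} \ot \init \subset \Ker(C - I)$ by definition, so $C$ restricts to the identity on this subspace. The contraction argument then gives $\noise^C_{\Gauss} \ot \init \subset \Ker(C - I)^*$ as well, so $C^*$ also acts as the identity there. Consequently, for $\xi \in \noise^C_{\Pres} \ot \init$ and $\eta \in \noise^C_{\Gauss} \ot \init$,
\[
\ip{\eta}{C \xi} = \ip{C^* \eta}{\xi} = \ip{\eta}{\xi} = 0
\]
by the orthogonality in~(a), so $\noise^C_{\Pres} \ot \init$ is $C$-invariant and the stated block matrix form holds with $C_{\Pres}$ being the restriction.

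For part~(c), if $c \in (\noise^C_{\Pres})^{C_{\Pres}}_{\Gauss}$, then $(C_{\Pres} - I)(c \ot v) = 0$ in $\noise^C_{\Pres} \ot \init$ for every $v \in \init$; but by~(b) the action of $C$ on $\noise^C_{\Pres} \ot \init$ coincides with $C_{\Pres}$, so $(C - I) E_c = 0$, i.e.\ $c \in \noise^C_{\Gauss}$. Since $c \in \noise^C_{\Pres} \cap \noise^C_{\Gauss} = \{0\}$ by~(a), the Gaussian subspace of $C_{\Pres}$ is trivial, and applying~(a) to $C_{\Pres}$ then yields $(\noise^C_{\Pres})^{C_{\Pres}}_{\Pres} = \noise^C_{\Pres}$. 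The only step requiring any real care is part~(b), specifically the invocation of the contraction identity $\Ker(C - I) = \Ker(C - I)^*$ to secure $C^*$-invariance of $\noise^C_{\Gauss} \ot \init$; the remaining arguments are formal manipulations of the inner-product identity above.
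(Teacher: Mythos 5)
Your proof is correct and follows essentially the same route as the paper: part~(a) via the same adjointness/orthogonal-complement computation, part~(b) via the contraction fact $\Ker(C-I) = \Ker(C^*-I)$ (which the paper invokes implicitly as the diagonal block form for contractions, while you derive the needed invariance of $\noise^C_{\Pres} \ot \init$ directly from it), and part~(c) by the same reduction to (a) and (b).
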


\begin{proof}
Set $I^C_{\Pres} := I_{\noise^C_{\Pres} \ot \init}$.

(a)
Let $c \in \noise$.
Then, for all $u \in \init$,
$$
\ip{c}{ ( I_\noise \ot \bra{u} ) (C - I )^* \xi} = 0 \text{ for all } \xi \in \noise \ot \init
\iff
( C - I ) ( c \ot u ) = 0.
$$
It follows that $c \in ( \noise^C_{\Pres} )^\perp$
if and only if $c \in \noise^C_{\Gauss}$,
so (a) follows.

(b)
Contraction operators $T$ on a Hilbert space $\Hil$ enjoy the following elementary property.
In terms of the orthogonal decomposition
$\Hil = \Kil^\perp \op \Kil$ where $\Kil := \Ker (T - I )$,
$T$ has diagonal block matrix form
$\left[ \begin{smallmatrix} * & 0 \\ 0 & I_\Kil \end{smallmatrix} \right]$.
Since
$\noise^C_{\Gauss}  \ot \init \subset \Ker ( C - I )$
it follows that, in terms of the orthogonal decomposition
$
\noise \ot \init =
 ( \noise^C_{\Pres} \ot \init ) \op ( \noise^C_{\Gauss} \ot \init )
$,
$C$ has the claimed block matrix form.

(c)
By (a) it suffices to show that $( \noise^C_{\Pres} )^{C_{\Pres}}_{\Gauss} = \{ 0 \}$.
Accordingly let $e \in ( \noise^C_{\Pres} )^{C_{\Pres}}_{\Gauss}$.
Then $( C_{\Pres} - I^C_{\Pres} ) E_e = 0$
and so, by (b),
$(C - I ) E_{J^C_{\Pres} e} = 0$,
in other words
$J^C_{\Pres} e \in \noise^C_{\Gauss}$.
Thus $J^C_{\Pres} e \in \noise^C_{\Pres} \cap \noise^C_{\Gauss} = \{ 0 \}$,
by (a).
Since $J^C_{\Pres}$ is an inclusion this implies that $e = 0$, as required.
\end{proof}

\begin{defn}
Let $F \in \Qqclg$ with block matrix form $\left[ \begin{smallmatrix} * & * \\ * & C - I\end{smallmatrix}\right]$.
We say that
\begin{alist}
\item
$F$ is
\emph{Gaussian} if
$\noise^C_{\Gauss} = \noise$,
equivalently $C = I$ so that $F$ has block matrix form
 $\left[ \begin{smallmatrix} * & * \\ * & 0 \end{smallmatrix}\right]$;
\item
$F$ is
\emph{wholly non-Gaussian} if
$\noise^C_{\Pres} = \noise$,
equivalently $C _{\Pres}= C$;
\item
$F$ is
\emph{pure Gaussian} if
it has block matrix form
 $\left[ \begin{smallmatrix} -\tfrac{1}{2} L^*L & -L^* \\ L & 0 \end{smallmatrix}\right]$,
for some $L \in B(\init ; \noise \ot \init)$,
equivalently
$\left[ \begin{smallmatrix} -\tfrac{1}{2} M M^* & M \\ -M^*  & 0 \end{smallmatrix}\right]$,
for some $M \in B( \noise \ot \init; \init )$;
\item
$F$ is
\emph{pure preservation} if
it has block matrix form
$\left[ \begin{smallmatrix} 0 & 0 \\ 0 & C - I \end{smallmatrix}\right]$;
\item
$F$ is
\emph{pure drift} if
it has block matrix form
$\left[ \begin{smallmatrix} * & 0 \\ 0 & 0 \end{smallmatrix}\right]$,
in other words $F \in \Qzlg$.
\end{alist}
Write
\[
\Qglg, \ \Qwnglg, \ \Qpglg  \, \text{ and } \, \Qpplg,
\]
for the respective classes of Gaussian, wholly non-Gaussian, pure Gaussian and pure preservation generator.
\end{defn}

\begin{rems}
(i)
The terminology has its origins in the work of Sch\"urmann on
L\'evy--Khintchin-type decompositions
for L\'evy processes on bialgebras
such as algebraic quantum groups (\cite{Schurmann}).
In the presence of a minimality condition,
the Gaussian property for the generator of an elementary unitary QS cocycle $U$
may alternatively be expressed in terms of the cocycle itself as follows:
\[
t^{-1}
\Expect \big[
( U^1_t - I )^{u_1}_{v_1} ( U^2_t - I )^{u_2}_{v_2} ( U^3_t - I )^{u_3}_{v_3}
\big]
\to 0
\ \text{ as } \ t \to 0
\]
for all
$u_1, \cdots , v_3 \in \init$ and
all choices of $U^1$, $U^2$ and $U^3$ from $\{ U, U^* \}$
where,
for $X \in B(\init \ot \Fock)$ and $u,v \in \init$,
$
X^u_v :=
( \omega_{u,v} \otol \id_{B(\Fock)} )(X) =
( \bra{u} \ot I_\Fock ) X ( \ket{v} \ot I_\Fock )
$
(see~\cite{SSS} and~\cite{Schurmann}).

(ii)
By Theorem~\ref{thm: LW, GLSW} (a),
$F \in \Qglg$ if and only if it has block matrix form
 $\left[ \begin{smallmatrix} K & -L^* \\ L & 0 \end{smallmatrix}\right]$,
for some $K \in B(\init)$ and $L \in B(\init ; \noise \ot \init)$.

(ii)
Given a Gaussian QS generator
 $\left[ \begin{smallmatrix} K & -L^* \\ L & 0 \end{smallmatrix}\right]$,
any orthogonal decomposition $\noise = \noise_1 \op \noise_2$
determines a decomposition
$F = F_1 \boxplus F_1'$
in which,
corresponding to the block matrix decomposition
 $L = \left[ \begin{smallmatrix} L_1\\ L_2 \end{smallmatrix}\right]$,
$F_1$ is pure Gaussian with
\[
F_1 =
\begin{bmatrix}
 -\tfrac{1}{2} L_1^*L_1 & -L_1^* \\ L_1 & 0
\end{bmatrix}
\ \text{ and } \
F_1' =
\begin{bmatrix}
 K + \tfrac{1}{2} L_1^* L_1
&
-L_2^*
\\
L_2
&
0
\end{bmatrix}.
\]
Since
$L^* L = L_1^* L_1 + L_2^* L_2$,
$F_1'$ is pure Gaussian too
if and only $F$ is.

(iii)
In the case of no noise ($\noise = \{ 0 \}$), $\Qpplg = \emptyset$ and
$\Qglg$, $\Qwnglg$, $\Qpglg$ and $\Qzlg$ all equal $\Qblg$.
Otherwise, when $\noise \neq \{ 0 \}$,
all of
$\Qglg$, $\Qwnglg$, $\Qpglg$, $\Qpplg$  and $\Qzlg$
are selfadjoint and the following relations are easily verified:
\begin{align*}
&
\Qglg \cap \Qwnglg = \emptyset;
\\
&
\Qpglg \cap \Qzlg = \{ 0 \} = \Qglg \cap \Qpplg;
\\
&
\Qpglg  \subset \Qulg
\ \text{ and } \
\Qpplg \subset \Qclg;
\\
&
\Qglg \comp \Qglg = \Qglg
\ \text{ and } \
 \Qpplg \comp \Qpplg = \Qpplg;
\\
&
\Qzlg \comp \Qpglg = \Qglg = \Qpglg \comp \Qzlg.
\end{align*}
Also,
in view of the series decompositions (Proposition~\ref{propn: canon series}),
\begin{align*}
&
\Qzlg \comp \Qpglg \comp \Qpplg \supset \Qilg,
\ \text{ and}
\\
&
\Qpplg \comp \Qpglg \comp \Qzlg \supset \Qilg^*.
\end{align*}
The next result implies that
\[
\Qqclg =
\bigcup_{\noise = \noise_1 \op \noise_2}
\QwnglgSolo(\wh{\noise_1} \ot \init )
\boxplus
\QpglgSolo(\wh{\noise_2} \ot \init ).
\]
\end{rems}

\begin{thm}
\label{thm: G decomp}
Let $F \in \Qqclg$.
Then $F$ enjoys a unique decomposition
\[
F_{\wnGauss} \boxplus F_{\mGauss}
\]
where,
for some orthogonal decomposition $\noise_1 \op \noise_2$ of $\noise$,
$$
F_{\wnGauss} \in \QwnglgSolo( \wh{\noise_1} \ot \init )
\ \text{ and } \
F_{\mGauss} \in \QpglgSolo ( \wh{\noise_2} \ot \init ).
$$
\end{thm}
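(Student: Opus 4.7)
The plan is to read off both the orthogonal decomposition of $\noise$ and the two components of $F$ directly from the bottom-right block of $F$. Writing $F$ in form~\eqref{star} with parameters $\beta$, $H$, $L$, $A$, $C$, $D$, take $\noise_1 := \noise^C_{\Pres}$ and $\noise_2 := \noise^C_{\Gauss}$. Lemma~\ref{lemma: kCNG} yields the orthogonal decomposition $\noise = \noise_1 \op \noise_2$ and exhibits $C$ as block-diagonal, $C = C_{\Pres} \op I_{\noise_2 \ot \init}$. Decompose $L$ and $M$ compatibly as $L = \left[\begin{smallmatrix} L_1 \\ L_2 \end{smallmatrix}\right]$ and $M = \left[\begin{smallmatrix} M_1 & M_2 \end{smallmatrix}\right]$. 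The crucial observation is that on $\noise_2 \ot \init$, $C$ acts as the identity and $(I - C^*C)^{1/2}$ vanishes, so form~\eqref{star} forces $M_2 = -L_2^*$: the $\noise_2$-part of $F$ already carries the first-row signature of a pure Gaussian generator.

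Guided by this, set
\[
F_{\mGauss} := \begin{bmatrix} -\tfrac{1}{2} L_2^* L_2 & -L_2^* \\ L_2 & 0 \end{bmatrix}
\in B\big( \wh{\noise_2} \ot \init \big)
\quad \text{ and } \quad
F_{\wnGauss} := \begin{bmatrix} K + \tfrac{1}{2} L_2^* L_2 & M_1 \\ L_1 & C_{\Pres} - I \end{bmatrix}
\in B\big( \wh{\noise_1} \ot \init \big),
\]
where $K$ denotes the $(1,1)$-block of $F$. By construction $F_{\mGauss} \in \QpglgSolo(\wh{\noise_2} \ot \init)$. Using $L^*L = L_1^* L_1 + L_2^* L_2$, the $(1,1)$-block of $F_{\wnGauss}$ simplifies to $\beta I + iH - \tfrac{1}{2}(L_1^* L_1 + A^2)$, and the block-diagonality of $C$ gives $M_1 = -L_1^* C_{\Pres} - A D_1 (I - C_{\Pres}^* C_{\Pres})^{1/2}$ with $D_1 := D|_{\noise_1 \ot \init}$. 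This is exactly form~\eqref{star} on $\wh{\noise_1} \ot \init$ with parameters $\beta$, $H$, $L_1$, $A$, $C_{\Pres}$, $D_1$, so $F_{\wnGauss} \in \mathfrak{qc}_\beta(\wh{\noise_1} \ot \init)$ by Theorem~\ref{thm: LW, GLSW}(a); Lemma~\ref{lemma: kCNG}(c) then puts $F_{\wnGauss}$ in $\QwnglgSolo(\wh{\noise_1} \ot \init)$. A direct application of the concatenation formula, again using $C = C_{\Pres} \op I_{\noise_2 \ot \init}$ to produce the required zero off-diagonal blocks, yields $F_{\wnGauss} \boxplus F_{\mGauss} = F$.

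For uniqueness, suppose $F = G_1 \boxplus G_2$ with $G_1 \in \QwnglgSolo(\wh{\mu_1} \ot \init)$ and $G_2 \in \QpglgSolo(\wh{\mu_2} \ot \init)$. The concatenation formula shows that the $C$-block of $F$ is $C_{G_1} \op I_{\mu_2 \ot \init}$. Since $G_1$ is wholly non-Gaussian, the Gaussian subspace of $\mu_1$ with respect to $C_{G_1}$ is trivial by Lemma~\ref{lemma: kCNG}(a), so the definition of $\noise^C_{\Gauss}$ forces $\mu_2 = \noise^C_{\Gauss} = \noise_2$ and hence $\mu_1 = \noise_1$. The $L$- and $M$-blocks of $G_1$ and $G_2$ are then precisely the restrictions to $\noise_i \ot \init$ of those of $F$, and the pure-Gaussian constraint on $G_2$ forces $K_{G_2} = -\tfrac{1}{2} L_2^* L_2$ and thereby $K_{G_1} = K - K_{G_2}$, giving $(G_1, G_2) = (F_{\wnGauss}, F_{\mGauss})$. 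The main obstacle is the identity $M_2 = -L_2^*$, from which the pure Gaussian structure of $F_{\mGauss}$ springs; this is precisely where the quasicontractivity of $F$, via form~\eqref{star} of its first row, does the essential work --- everything else amounts to bookkeeping with the decomposition of $\noise$ supplied by Lemma~\ref{lemma: kCNG}.
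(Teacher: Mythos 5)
Your proof is correct and follows essentially the same route as the paper: take the orthogonal decomposition $\noise = \noise^C_{\Pres} \op \noise^C_{\Gauss}$ from Lemma~\ref{lemma: kCNG}, define $F_{\wnGauss}$ and $F_{\mGauss}$ as the corresponding compressions with the $\tfrac{1}{2}L_2^*L_2$ adjustment to the $(1,1)$-block, and verify uniqueness by showing the $C$-block pins down $\noise_2 = \noise^C_{\Gauss}$. The only (cosmetic) difference is that you derive $M_2 = -L_2^*$ directly from form~\eqref{star} and the vanishing of $(I-C^*C)^{1/2}$ on the Gaussian block, and likewise exhibit form~\eqref{star} for $F_{\wnGauss}$, whereas the paper cites its Remark (ii) for the former and leaves the latter implicit; both are the same argument in substance.
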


\begin{proof}
Let
$\left[ \begin{smallmatrix} K & M \\ L & C - I \end{smallmatrix}\right]$
be the block matrix decomposition of $F$,
and set
\[
L^C_{\Gauss} := ( J^C_{\Gauss} \ot I_\init )^* L, \
L^C_{\Pres} := ( J^C_{\Pres} \ot I_\init )^* L, \
M^C_{\Gauss} := M ( J^C_{\Gauss} \ot I_\init ) \ \text{ and } \
M^C_{\Pres} := M ( J^C_{\Pres} \ot I_\init ).
\]
Then
$
F = F_{\wnGauss} \boxplus F_{\mGauss}
$
where
\[
F_{\wnGauss} =
\begin{bmatrix}
K + \tfrac{1}{2} ( L^C_{\Gauss} )^* L^C_{\Gauss} & M^C_{\Pres}
\\
L^C_{\Pres} & C_{\Pres} - I
\end{bmatrix}
\
\text{ and }
\
F_{\mGauss} =
\begin{bmatrix}
- \tfrac{1}{2} ( L^C_{\Gauss} )^* L^C_{\Gauss} & M^C_{\Gauss}
\\
L^C_{\Gauss} & 0
\end{bmatrix}.
\]
By Remark (ii) above,
$M^C_{\Gauss} = - ( L^C_{\Gauss} )^*$
so
$F_{\mGauss}$ is pure Gaussian.
It follows from Lemma~\ref{lemma: kCNG}
that $F_{\wnGauss}$ is wholly non-Gaussian.
This proves existence.

For uniqueness, suppose that $F = F_1 \boxplus F_2$ where
$F_1 \in \QwnglgSolo(\wh{\noise_1} \ot \init)$ and
$F_2 \in \QpglgSolo(\wh{\noise_2} \ot \init)$
for an orthogonal decomposition $\noise = \noise_1 \oplus \noise_2$.
Then $F_1$ and $F_2$ have block matrix decompositions
\[
F_1 =
\begin{bmatrix}
K_1 & M_1
\\
L_1 & C_1 - I
\end{bmatrix}
\
\text{ and }
\
F_2 =
\begin{bmatrix}
- \tfrac{1}{2} L_2^* L_2 & -L_2^*
\\
L_2 & 0
\end{bmatrix},
\]
moreover
\[
\noise^{C_1}_{\Pres} = \noise_1
\ \text{ and } \
\noise^C_{\Gauss} \supset \noise_2.
\]
Suppose that
$e_1 \in \noise_1$ and $\binom{e_1}{0} \in \noise^C_{\Gauss}$.
Then $(C - I ) E_{\binom{e_1}{0}} = 0$,
so  $(C_1 - I ) E_{e_1} = 0$.
Since $F_1$ is wholly non-Gaussian this implies that $e_1 = 0$.
Thus $\noise^C_{\Gauss} = \noise_2$ and $\noise^C_{\Pres} = \noise_1$.
It follows that $C_1 = C_{\Pres}$ and $L_2 = L^C_{\Gauss}$.
This implies that
$F_1 = F_{\wnGauss}$ and $F_2 = F_{\mGauss}$,
as required.
\end{proof}

\begin{rems}
(i)
Clearly $F_{\mGauss}$ is the \emph{maximal} pure Gaussian component of $F$.

(ii)
Let $F \in \Qilg$.
Then the above decomposition of $F$ takes the form
\[
F =
\begin{bmatrix} K_1 & -L_1^* W \\ L_1 & W - I \end{bmatrix}
\boxplus
\begin{bmatrix} - \tfrac{1}{2} L_2^* L_2 & -L_2^*  \\ L_2 & 0 \end{bmatrix}
\in
\mathfrak{i}( \wh{\noise_1} \ot \init ) \boxplus \mathfrak{u}( \wh{\noise_2} \ot \init )
\]
with
$\re K_1 = - \tfrac{1}{2} L_1^* L_1$,
$W$ isometric
and
$\noise^{W}_{\Pres} = \noise_1$;
$W$ being unitary if and only if $F \in \Qulg$.
\end{rems}


 \section{Perturbation of QS cocycles}
 \label{section: perturbation}

 In case the second of two elementary QS cocycles
 $V^1$, $V^2$ is isometric,
 there is another, more standard, way of realising the QS cocycle
 $X^{F_1 \comp F_2}$
 where $F_1$ and $F_2$ are the stochastic generators of $V^1$ and $V^2$.

 \begin{thm}[\emph{Cf.}~\cite{EvansHudson}, \cite{BLS}]
 \label{thm: perturbation}
 Let $F_1 \in \Qqclg$ and $F_2 \in \Qilg$
 Then
 \[
 X^{F_1 \comp F_2} =
 X^{j^2, F_1} X^{F_2}
 \]
 where $j^2$ is the normal *-monomorphic QS mapping cocycle
on \tu{(}the von Neumann algebra\tu{)} $B(\init)$ given by
\[
j^2_t(x) = X^{F_2}_t ( x \ot I_\Fock ) ( X^{F_2}_t )^*
\qquad
( x \in B(\init), t \in \Rplus ),
\]
and
 $X^{j^2, F_1}$ is the unique strong solution of the QS differential
 equation
 $\ud X_t = X_t \cdot G_t \ud\Lambda_t$, $X_0 = I$, for the
 integrand process
 $G := \big( (\id_{B(\khat)} \otol j^2_t)(F_1) \big)_{t\ges 0}$.
\end{thm}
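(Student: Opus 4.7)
The plan is to set $Y_t := X^{j^2, F_1}_t X^{F_2}_t$ and show that $Y$ satisfies the same QS differential equation as $X^{F_1 \comp F_2}$, then invoke uniqueness from Theorem~\ref{thm: cocycle = qsde}. The first preliminary is to verify that $F_1 \comp F_2 \in \Qqclg$. Using the identity $F^* \comp X \comp F = F^* \comp F + (I + \Delta F)^* X (I + \Delta F)$ (displayed just before Proposition~\ref{propn: classes}) with $F = F_2$ and $X = F_1^* \comp F_1$, together with $F_2^* \comp F_2 = 0$, $F_1^* \comp F_1 \les 2\beta \Delta^\perp$ for some $\beta \ges \beta_0(F_1)$, and the elementary fact $\Delta^\perp (I + \Delta F_2) = \Delta^\perp$, one obtains $(F_1 \comp F_2)^* \comp (F_1 \comp F_2) \les 2 \beta \Delta^\perp$; hence $F_1 \comp F_2 \in \Qqclgbeta$ and Theorem~\ref{thm: cocycle = qsde} produces $X^{F_1 \comp F_2}$.

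Next, since $F_2 \in \Qilg$, Theorem~\ref{thm: cocycle = qsde} ensures that $X^{F_2}$ is isometric, so $j^2_t(x) = X^{F_2}_t (x \ot I_\Fock)(X^{F_2}_t)^*$ defines a normal $*$-monomorphism $B(\init) \to B(\init \ot \Fock)$. Consequently the integrand
\[
G_t = (\id_{B(\khat)} \otol j^2_t)(F_1) = (I_\khat \ot X^{F_2}_t)(F_1 \ot I_\Fock)(I_\khat \ot (X^{F_2}_t)^*)
\]
is strongly continuous in $t$ and uniformly bounded in norm by $\norm{F_1}$. Existence and uniqueness of a strongly continuous solution $X^{j^2, F_1}$ to the time-dependent QSDE is then standard, following the line of argument used for the time-independent case in Theorem~\ref{thm: cocycle = qsde} (\emph{cf.} also~\cite{EvansHudson},~\cite{BLS}).

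Now apply the quantum It\^o product formula~\eqref{eqn: QI} to $Y_t = X^{j^2, F_1}_t X^{F_2}_t$. The three integrand contributions are
\[
(I_\khat \ot X^{j^2, F_1}_t)\, G_t\, (I_\khat \ot X^{F_2}_t),
\quad
(I_\khat \ot Y_t)(F_2 \ot I_\Fock),
\quad
(I_\khat \ot X^{j^2, F_1}_t)\, G_t \Delta\, (I_\khat \ot X^{F_2}_t)(F_2 \ot I_\Fock).
\]
Because $\Delta = \Delta_\noise \ot I_{\init \ot \Fock}$ commutes with $I_\khat \ot X^{F_2}_t$, and $(X^{F_2}_t)^* X^{F_2}_t = I$, the explicit form of $G_t$ gives $G_t (I_\khat \ot X^{F_2}_t) = (I_\khat \ot X^{F_2}_t)(F_1 \ot I_\Fock)$ and $G_t \Delta (I_\khat \ot X^{F_2}_t) = (I_\khat \ot X^{F_2}_t)(F_1 \Delta \ot I_\Fock)$. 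Summing the three simplified contributions yields $(I_\khat \ot Y_t)\bigl((F_1 \comp F_2) \ot I_\Fock\bigr)$, so $Y$ is a strong solution of $\ud Y_t = Y_t \cdot (F_1 \comp F_2) \ud \Lambda_t$ with $Y_0 = I$. Uniqueness of such solutions, from Theorem~\ref{thm: cocycle = qsde}, forces $Y = X^{F_1 \comp F_2}$.

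The main obstacle is the rigorous treatment of the perturbed QSDE $\ud X_t = X_t \cdot G_t \ud \Lambda_t$ with the time-dependent integrand $G_t$, both for existence of $X^{j^2, F_1}$ and for applying~\eqref{eqn: QI} to a time-varying integrand; once this is in place, everything else is an algebraic identity that hinges on the isometry of $X^{F_2}$ and the commutation of $\Delta$ with ampliations acting on $\init \ot \Fock$.
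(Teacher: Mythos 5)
Your proposal is correct and follows essentially the same route as the paper's proof: cite \cite{BLS} (and \cite{EvansHudson}) for the existence and quasicontractivity of the time-dependent solution $X^{j^2, F_1}$, apply the quantum It\^o product formula to $X^{j^2, F_1}_t X^{F_2}_t$ using the isometry of $X^{F_2}$ and the commutation of $\Delta$ with $I_\khat \ot X^{F_2}_t$ to simplify the integrand to $(I_\khat \ot Y_t)\big((F_1\comp F_2)\ot I_\Fock\big)$, and then invoke uniqueness from Theorem~\ref{thm: cocycle = qsde}. The only superfluous step is the explicit verification that $F_1\comp F_2 \in \Qqclg$, which is immediate from Proposition~\ref{propn: classes}(d) ($\qclg$ is a submonoid and $\Qilg\subset\Qqclg$), though your direct calculation via the identity $F^*\comp X\comp F = F^*\comp F + (I+\Delta F)^*X(I+\Delta F)$ is also valid.
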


\begin{proof}
 Given the existence of $X^{j^2, F_1}$ and its quasicontractivity (\cite{BLS}),
 the result follows easily from the quantum It\^o product formula
 and uniqueness for
weak solutions,
which are bounded with locally uniform bounds,
 of
 the QS differential equation
  $\ud X_t = X_t \cdot (F_1 \comp F_2)  \ud\Lambda_t$, $X_0 = I_{\init \ot \Fock}$
  (see Theorem~\ref{thm: cocycle = qsde}).
\end{proof}

\begin{rem}
In~\cite{BLS} we worked in the equivalent category of
QS \emph{right} cocycles.
\end{rem}

Applying this result to the Gaussian/non-Gaussian decomposition of QS generators
(Theorem~\ref{thm: G decomp})
yields the following result.
Recall the injections~\eqref{eqn: iota con} associated with
 realising the concatenation product in terms of the series product~\eqref{eqn: effectively}.

\begin{cor}
\label{cor: perturbabion}
Let $F \in \Qqclg$ with block matrix form $\left[ \begin{smallmatrix} * & * \\ * & C - I \end{smallmatrix}\right]$.
Then
 \[
X^F =
 X^{j^2, F_1} X^{F_2},
 \]
where,
for the orthogonal decomposition
$\noise \ot \init = ( \noise^C_{\Pres} \otimes \init )  \op  ( \noise^{C}_{\Gauss} \otimes \init )$,
$$
F_1 = \iota (F_{\wnGauss})
\ \text{ and } \
F_2 = \iota' (F_{\mGauss}).
$$
\end{cor}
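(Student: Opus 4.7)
The proof is essentially an assembly of pieces that have been put in place earlier in the paper. My plan is as follows.

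First I would invoke Theorem~\ref{thm: G decomp} to obtain the canonical decomposition $F = F_{\wnGauss} \boxplus F_{\mGauss}$, associated with the orthogonal decomposition $\noise = \noise^C_{\Pres} \op \noise^C_{\Gauss}$ of the noise dimension space. This directly identifies the summands to which $\iota$ and $\iota'$ are to be applied, namely $F_1 = \iota(F_{\wnGauss}) \in \Qqclg$ and $F_2 = \iota'(F_{\mGauss}) \in \Qqclg$. By identity~\eqref{eqn: effectively},
\[
F_1 \comp F_2 = \iota(F_{\wnGauss}) \comp \iota'(F_{\mGauss}) = F_{\wnGauss} \boxplus F_{\mGauss} = F,
\]
so $X^F = X^{F_1 \comp F_2}$.

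Next I would verify that Theorem~\ref{thm: perturbation} is applicable, the only nontrivial hypothesis being that $F_2 \in \Qilg$. Since $F_{\mGauss}$ is pure Gaussian it lies in $\mathfrak{u}(\wh{\noise^C_{\Gauss}} \ot \init) \subset \mathfrak{i}(\wh{\noise^C_{\Gauss}} \ot \init)$ (this being one of the basic relations listed after the definition of pure-Gaussian generator). A direct computation from the definition~\eqref{eqn: iota con} of $\iota'$ shows that $\iota'(F^* \comp F) = \iota'(F)^* \comp \iota'(F)$, so $\iota'$ maps $\Qilg$-type generators to $\Qilg$-type generators; hence $F_2 \in \Qilg$. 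With this verified, Theorem~\ref{thm: perturbation} yields
\[
X^F = X^{F_1 \comp F_2} = X^{j^2, F_1} X^{F_2},
\]
which is precisely the claimed identity.

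The whole argument is therefore short, and the main obstacle, such as it is, lies in the bookkeeping needed to confirm that the two concatenation-style injections $\iota$ and $\iota'$ interact correctly with the series product and with the classes $\Qqclg$ and $\Qilg$. Once that is checked at the level of block matrices (essentially an unpacking of the $3\times 3$ block forms displayed in the definitions of $\iota$ and $\iota'$), the perturbation theorem does all of the analytic work, and no further quantum It\^o estimates are required.
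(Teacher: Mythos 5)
Your proposal is correct and takes essentially the same route as the paper: decompose $F$ via Theorem~\ref{thm: G decomp}, rewrite the concatenation product as a series product via~\eqref{eqn: effectively}, and then invoke Theorem~\ref{thm: perturbation}. The only difference lies in how you verify the hypothesis $F_2 \in \Qilg$: you argue that $\iota'$ is a $*$-monoid homomorphism for $\comp$ (hence preserves the defining identity $F^* \comp F = 0$), whereas the paper's proof simply observes that $\iota'$ carries pure Gaussian generators to pure Gaussian generators, i.e.\ $\iota'\big(\QpglgSolo(\wh{\noise^C_{\Gauss}} \ot \init)\big) \subset \Qpglg \subset \Qulg$, so in fact $F_2 \in \Qulg$ and $X^{F_2}$ is unitary. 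Both are short block-matrix checks and deliver the same conclusion.
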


\begin{proof}
The inclusions
\[
\iota' \big( \QpglgSolo ( \wh{\noise^C_{\Gauss}} \ot \init) \big)
\subset
\Qpglg
\subset
\Qulg
\]
ensure that $X^{F_2}$ is unitary and so Theorem~\ref{thm: perturbation} applies.
\end{proof}


\section{Holomorphic QS Cocycles}
 \label{section: holomorphic cocycles}

 In this section the setting is extended to holomorphic QS cocycles (\cite{LS holomorphic}).
 Before formulating the conjecture, the corresponding It\^o algebra is investigated, mirroring
 Section~\ref{section: quantum Ito algebra}.
 As is customary, we identify each bounded Hilbert space operator
 $T$ with its associated quadratic form $q_T$,
 given by $q_T[\xi] := \ip{\xi}{T\xi}$.
We also use the notation $q(\cdot, \cdot)$
 for the sesquilinear form
 associated with a quadratic form $q[\, \cdot \,]$
 by polarisation.

 Fix Hilbert spaces $\hil$ and $\Hil$.
 Let $\QF$ denote the class of
 quadratic forms $\Gamma$ on $\hil \op \Hil$
 having the following structure:
\[
\left\{
\begin{array}{l l}
\Dom \Gamma = \Domain \op \Hil
&
 \\
 \Gamma [ \xi] =
 \gamma[u] -
 \big[
 \ip{\zeta}{Lu} +
 \ip{\wt{L}u}{\zeta} +
 \ip{\zeta}{(C-I)\zeta}
 \big],
&
 \
 \text{ for } \xi = \binom{u}{\zeta} \in \Dom \Gamma,
 \end{array} \right.
\]
 where $\Domain$ is a subspace of $\hil$,
 $C \in B(\Hil)$, $\gamma$ is a quadratic form on $\hil$,
 $L$ and $\wt{L}$ are operators from $\hil$ to $\Hil$,
 and
 \[
 \Dom \gamma = \Dom L = \Dom \wt{L} = \Domain.
 \]
For reasons which will become apparent,
$\Domain$ is \emph{not} assumed to be dense in $\hil$.

 Write $\Gamma \sim (\gamma, L, \wt{L}, C)$, and
 refer to $(\gamma, L, \wt{L}, C)$ as the \emph{components} of $\Gamma$.
 Also define an associated operator on $\hil \op \Hil$ by
 \[
  \FD_\Gamma := \begin{bmatrix} 0 & 0 \\ L & C-I \end{bmatrix}.
 \]
 Thus
 \begin{equation*}
  \Dom \FD_\Gamma = \Dom \Gamma
  \text{ and }
 \Ran \FD_\Gamma \subset \{ 0 \} \op \Hil
=
\Ran \Delta
 \end{equation*}
where, as usual, $\Delta := 0_\hil \op I_\Hil$.
 The inclusion obviously implies that
 \begin{equation}
 \label{eqn: range}
 \Ran  \FD_\Gamma \subset \Domain' \op \Hil
 \text{ for \emph{any} subspace }
 \Domain'
 \text{ of }
 \hil.
  \end{equation}
 Note that if $\Gamma \in \QF$ with components $\components$,
 then the adjoint form $\Gamma^*$ belongs to $\QF$ too, with
 \begin{equation*}
 \Gamma^* \sim (\gamma^*, \wt{L}, L, C^*)
 \text{ and }
 \FD_{\Gamma^*} =
 \begin{bmatrix} 0 & 0 \\ \wt{L} & C^*-I \end{bmatrix}.
  \end{equation*}
 Thus, in terms of the associated sesquilinear form,
 \begin{equation}
  \label{Gamma Delta xi xi}
 \ip{\xi}{\FD_\Gamma \xi} = - \Gamma (\Delta \xi, \xi)
 \text{ and }
 \ip{\FD_{\Gamma^*} \xi}{\xi} =
 - \Gamma (\xi, \Delta \xi)
 \qquad
 (\xi \in \Dom \Gamma).
 \end{equation}

 \begin{defn}
 For $\Gamma_i \in \QF$, with components $\componentsi$
 ($i=1,2$),
 define
 $\Gamma_1 \comp \Gamma_2,\, \Gamma_1 \Delta \Gamma_2 \in \QF$
 by
 \begin{align*}
 &\Gamma_1 \comp \Gamma_2 \sim
 \components, \text{where}
 \\
 &\qquad \quad
 \gamma[u] =
 \gamma_1[u] + \gamma_2[u] - \ip{\wt{L}_1 u}{L_2 u},
 \\
 &\qquad \quad
 L = L_1 + C_1 L_2, \
 \wt{L} = C_2^* \wt{L}_1 + \wt{L}_2 \
 \text{ and } \
 C = C_1 C_2;
 \\
 &\Gamma_1 \Delta \Gamma_2 \sim
 \components, \text{where}
 \\
 &\qquad \quad
 \gamma[u] =
 - \ip{\wt{L}_1 u}{L_2 u},
 \\
 &\qquad \quad
 L = (C_1 - I) L_2, \
 \wt{L} = (C_2^* - I) \wt{L}_1 \
 \text{ and } \
 C = (C_1 - I) (C_2 - I) +I.
 \end{align*}
 \end{defn}
 Thus
 \begin{subequations}
  \begin{align}
  \label{6.3a}
 &\Gamma_1 \comp \Gamma_2 =
 \Gamma_1 + \Gamma_2 + \Gamma_1 \Delta \Gamma_2,
 \\
 \nonumber
 &\Dom ( \Gamma_1 \comp \Gamma_2 ) =
 \Dom ( \Gamma_1 \Delta \Gamma_2 ) =
 \Dom \Gamma_1 \cap \Dom \Gamma_2,
 \\
  \label{F = CF}
 &( \Gamma_1 \Delta \Gamma_2 )[\xi] =
 - \ip{\FD_{\Gamma^*_1} \xi}{\FD_{\Gamma_2} \xi}
 \quad
 ( \xi \in \Dom \Gamma_1 \cap \Dom \Gamma_2),
 \text{ and }
 \\
  \label{5.3now}
 &\FD_{\Gamma_1 \Delta \Gamma_2} =
\big(  \{ 0 \} \op (C_1 - I) \big)
 \FD_{\Gamma_2}.
 \end{align}
 \end{subequations}

 \begin{lemma}
 \label{lemma: Gamma Delta algebra}
  The prescription
  $(\Gamma_1, \Gamma_2) \mapsto \Gamma_1 \Delta \Gamma_2$
  defines an associative and bilinear composition on the vector space $\QF$
 which is also involutive:
 \begin{equation}
  \label{Gamma Delta star}
 ( \Gamma_1 \Delta \Gamma_2 )^* =
 \Gamma_2^* \Delta \Gamma_1^*.
 \end{equation}
 \end{lemma}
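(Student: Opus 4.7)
The plan is to verify the three properties componentwise, since every $\Gamma \in \QF$ is fully determined by its domain subspace $\Domain \subset \hil$ together with its components $\components$. Bilinearity is immediate from the definition, each of the four components of $\Gamma_1 \Delta \Gamma_2$ being bilinear in $(\Gamma_1, \Gamma_2)$. For the involution identity~\eqref{Gamma Delta star}, I would simply take the form-theoretic adjoint in each of the four slots of $\Gamma_1 \Delta \Gamma_2$ (which swaps $L$ and $\wt{L}$, conjugates the $\gamma$-component, and replaces $C$ by $C^*$), and check that the resulting quadruple matches the expansion of $\Gamma_2^* \Delta \Gamma_1^*$ obtained from $\Gamma_i^* \sim (\gamma_i^*, \wt{L}_i, L_i, C_i^*)$; both give the quadruple whose $C$-component is $(C_2^* - I)(C_1^* - I) + I$, with $L$-component $(C_2^* - I)\wt{L}_1$, $\wt{L}$-component $(C_1 - I)L_2$ and $\gamma$-part $u \mapsto -\ip{L_2 u}{\wt{L}_1 u}$.

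The heart of the lemma is associativity. The reparametrisation $D_i := C_i - I$ turns the composition law into the transparent rules
\[
C - I = D_1 D_2, \quad L = D_1 L_2, \quad \wt{L} = D_2^* \wt{L}_1, \quad \gamma[u] = -\ip{\wt{L}_1 u}{L_2 u},
\]
so that associativity of the $C$-, $L$- and $\wt{L}$-components reduces immediately to associativity of operator composition. The only residual check is for the $\gamma$-component, where a one-line computation shows that both triple nestings yield
\[
\gamma[u] = -\ip{\wt{L}_1 u}{D_2 L_3 u}.
\]
Agreement of the domains is automatic, all compositions having common domain $(\Dom\Gamma_1 \cap \Dom\Gamma_2 \cap \Dom\Gamma_3) \oplus \Hil$.

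A more structural cross-check, which I would use to confirm but not base the write-up on, observes that the operator $\FD_\Gamma$ satisfies $\FD_{\Gamma_1 \Delta \Gamma_2} = \FD_{\Gamma_1} \Delta \FD_{\Gamma_2}$ in the sense of operator multiplication sandwiching the projection $\Delta = 0_\hil \oplus I_\Hil$ (directly verifiable from~\eqref{5.3now} and the block matrix of $\FD_{\Gamma_1}$), whence associativity of the operator component is inherited from associativity of operator composition; the identity~\eqref{F = CF} then handles the $\gamma$-part automatically. There is no serious obstacle here: the whole lemma amounts to disciplined bookkeeping of the four components under the composition, and the reparametrisation $D_i := C_i - I$ is what makes the verification essentially transparent.
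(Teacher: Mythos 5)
Your proposal is correct, and the reparametrisation $D_i := C_i - I$ does make the associativity check essentially transparent. The route is different in emphasis from the paper's. The paper works not with the four components directly but with the associated operator $\FD_\Gamma$ and two previously recorded identities: the expression~\eqref{F = CF}, $(\Gamma_1 \Delta \Gamma_2)[\xi] = -\ip{\FD_{\Gamma_1^*}\xi}{\FD_{\Gamma_2}\xi}$, and the factorisation~\eqref{5.3now}, $\FD_{\Gamma_1\Delta\Gamma_2} = (\{0\}\oplus(C_1-I))\FD_{\Gamma_2}$. Bilinearity then falls out of linearity of $\Gamma \mapsto \FD_\Gamma$ (and conjugate-linearity of $*$); involutivity and associativity are one-line computations by passing $\{0\}\oplus(C-I)$ across the inner product and invoking~\eqref{F = CF}. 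Your "structural cross-check" — noticing that $\FD_{\Gamma_1 \Delta \Gamma_2} = \FD_{\Gamma_1}\,\Delta\,\FD_{\Gamma_2}$ as operator products — is in fact a neat reformulation of~\eqref{5.3now} and is essentially the paper's proof in disguise; it is worth noting that it only recovers the $C$- and $L$-components directly, with~\eqref{F = CF} needed to recover the full form. Your primary, componentwise route buys elementariness and makes the bookkeeping visible; the paper's route is more compact and avoids any explicit triple expansion of $\gamma$. One small point you might flag explicitly when writing this up: the component map $\Gamma \mapsto (\gamma, L, \wt{L}, C-I)$ is linear in $\gamma$, $L$ and $C-I$ but \emph{conjugate}-linear in $\wt{L}$, and bilinearity of the composition relies on this (e.g.\ the factor $\wt{L}_1$ in $\gamma'[u] = -\ip{\wt{L}_1 u}{L_2 u}$ sits conjugate-linearly in the inner product, which cancels the conjugate-linearity of $\Gamma_1\mapsto\wt{L}_1$). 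The verification still goes through, but "each component is bilinear" is slightly glib without that remark.
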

 \begin{proof}
 Let $\Gamma_i \in \QF$ with domain $\Domain_i \op \Hil$
 ($i=1,2,3$).
 Bilinearity follows from the evident linearity of the map
 $\Gamma \mapsto \FD_\Gamma$, and~\eqref{Gamma Delta star}
 holds since, for $\xi \in (\Domain_1 \cap \Domain_2) \op \Hil$,
 \[
 ( \Gamma_1 \Delta \Gamma_2 )^* [\xi] =
 \ol{( \Gamma_1 \Delta \Gamma_2 ) [\xi]} =
 - \ip{\FD_{\Gamma_2} \xi}{\FD_{\Gamma^*_1} \xi} =
 ( \Gamma_2^* \Delta \Gamma_1^* ) [\xi].
 \]
Clearly
$\Dom \big( ( \Gamma_1 \Delta \Gamma_2 ) \Delta \Gamma_3 \big) =
\big( \Domain_1 \cap \Domain_2 \cap \Domain_3 \big) \op \Hil =
\Dom \big(  \Gamma_1 \Delta ( \Gamma_2  \Delta \Gamma_3  )\big)$
and,
for
 $\xi \in (\Domain_1 \cap \Domain_2 \cap \Domain_3) \op \Hil$,~\eqref{5.3now}
implies that
\begin{align*}
 \ip{\FD_{\Gamma_1^*} \xi}{\FD_{\Gamma_2\Delta \Gamma_3} \xi}
 &=
  \ip{\FD_{\Gamma_1^*} \xi}
  {\big( \{0\}\op(C_2-I) \big) \FD_{\Gamma_3} \xi}
 \\
 &=
 \ip{\big( \{0\}\op(C_2^*-I) \big)\FD_{\Gamma_1^*} \xi}
  {\FD_{\Gamma_3} \xi}
  \\
  &=
  \ip{\FD_{\Gamma_2^* \Delta \Gamma_1^*} \xi}
  {\FD_{\Gamma_3} \xi}
  =
 \ip{\FD_{(\Gamma_1 \Delta \Gamma_2)^*} \xi}
  {\FD_{\Gamma_3} \xi}.
\end{align*}
 Thus $\Delta$ is associative, by~\eqref{F = CF}.
 \end{proof}

 \begin{propn}
 \label{propn: Gamma series semigroup}
 The composition
  $(\Gamma_1, \Gamma_2) \mapsto \Gamma_1 \comp \Gamma_2$
  endows $\QF$ with the structure of a *-monoid
  whose identity element is $\Gamma_0 \sim ( 0, 0, 0, I )$, in particular,
  \begin{equation*}
 ( \Gamma_1 \comp \Gamma_2 )^* =
 \Gamma_2^* \comp \Gamma_1^*.
  \end{equation*}
 \end{propn}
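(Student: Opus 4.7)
The plan is to derive the proposition entirely from equation~\eqref{6.3a}, which expresses $\comp$ additively in terms of $\Delta$, together with Lemma~\ref{lemma: Gamma Delta algebra}, which supplies the bilinearity, associativity and involution-behaviour of $\Delta$. Closure of $\QF$ under $\comp$ is immediate from the component formulas in the definition (and the common domain of $\Gamma_1 \comp \Gamma_2$, being $(\Domain_1 \cap \Domain_2) \op \Hil$, is of the right form), so the proposition reduces to three checks: that $\Gamma_0 \sim (0,0,0,I)$ acts as a two-sided identity, that $\comp$ is associative, and that the form-adjoint reverses $\comp$.

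For the identity axiom, observe that $\Gamma_0[\xi] = 0$ on $\hil \op \Hil$ and $\FD_{\Gamma_0} = 0$, so by~\eqref{F = CF} we have $\Gamma_0 \Delta \Gamma = \Gamma \Delta \Gamma_0 = 0$ for every $\Gamma \in \QF$. Then~\eqref{6.3a} gives $\Gamma_0 \comp \Gamma = \Gamma_0 + \Gamma = \Gamma$ and similarly $\Gamma \comp \Gamma_0 = \Gamma$. (One can also verify this directly by substituting $(0,0,0,I)$ into the component formulas defining $\comp$.)

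For associativity, let $\Gamma_1, \Gamma_2, \Gamma_3 \in \QF$. Both $(\Gamma_1 \comp \Gamma_2) \comp \Gamma_3$ and $\Gamma_1 \comp (\Gamma_2 \comp \Gamma_3)$ have common domain $(\Domain_1 \cap \Domain_2 \cap \Domain_3) \op \Hil$. Expanding the outer $\comp$ and then the inner $\comp$ using~\eqref{6.3a}, and invoking the bilinearity of $\Delta$ granted by Lemma~\ref{lemma: Gamma Delta algebra}, both sides reduce to
\[
\Gamma_1 + \Gamma_2 + \Gamma_3
+ \Gamma_1 \Delta \Gamma_2 + \Gamma_1 \Delta \Gamma_3 + \Gamma_2 \Delta \Gamma_3
+ (\Gamma_1 \Delta \Gamma_2) \Delta \Gamma_3,
\]
after applying the associativity of $\Delta$ to rewrite $\Gamma_1 \Delta (\Gamma_2 \Delta \Gamma_3)$ as $(\Gamma_1 \Delta \Gamma_2) \Delta \Gamma_3$.

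For the involutive structure, the map $\Gamma \mapsto \Gamma^*$ is conjugate-linear on the vector space underlying $\QF$, and from~\eqref{6.3a} combined with~\eqref{Gamma Delta star} one computes
\[
(\Gamma_1 \comp \Gamma_2)^* = \Gamma_1^* + \Gamma_2^* + (\Gamma_1 \Delta \Gamma_2)^*
= \Gamma_2^* + \Gamma_1^* + \Gamma_2^* \Delta \Gamma_1^*
= \Gamma_2^* \comp \Gamma_1^*,
\]
which is the required identity. There is no serious obstacle: the entire argument is a bookkeeping exercise whose content has been packaged into the $\Delta$-algebra of Lemma~\ref{lemma: Gamma Delta algebra}. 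The only point to watch is that $\QF$ is not a priori a vector space in the strict sense (domains vary), but additive combinations and $\comp$ and $\Delta$ all produce forms with domain of the form $\Domain' \op \Hil$, so the calculations above make sense on the common domain.
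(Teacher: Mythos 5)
Your proof is correct and follows essentially the same route as the paper: both reduce all three monoid axioms to the $\Delta$-algebra packaged in Lemma~\ref{lemma: Gamma Delta algebra}, deriving the identical common expression $\Gamma_1+\Gamma_2+\Gamma_3+\Gamma_1\Delta\Gamma_2+\Gamma_1\Delta\Gamma_3+\Gamma_2\Delta\Gamma_3+\Gamma_1\Delta\Gamma_2\Delta\Gamma_3$ for the triple $\comp$-product and using~\eqref{Gamma Delta star} for the involution. You merely spell out the verifications (such as $\Gamma_0\Delta\Gamma=\Gamma\Delta\Gamma_0=0$ via $\FD_{\Gamma_0}=0$) that the paper dispatches with ``it is easily seen''.
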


 \begin{proof}
 Let $\Gamma_1, \Gamma_2, \Gamma_3 \in \QF$.
 In view of Lemma~\ref{lemma: Gamma Delta algebra},
 \[
 ( \Gamma_1 + \Gamma_2 + \Gamma_3 ) +
 ( \Gamma_1 \Delta \Gamma_2 +
   \Gamma_2 \Delta \Gamma_3 +
   \Gamma_1 \Delta \Gamma_3 ) +
   \Gamma_1 \Delta  \Gamma_2 \Delta \Gamma_3
 \]
 is a common expression for
 $\Gamma_1 \comp ( \Gamma_2 \comp \Gamma_3 )$ and
 $( \Gamma_1 \comp  \Gamma_2 ) \comp \Gamma_3 $,
which have common domain
$ (\Domain_1 \cap \Domain_2 \cap \Domain_3) \op \Hil$.
It is easily seen that the element
$\Gamma_0 \sim ( 0, 0, 0, I )$
satisfies
$\Gamma_0 \comp \Gamma = \Gamma = \Gamma \comp \Gamma_0$ for all $\Gamma \in \QF$.
 The fact that the adjoint operation defines an involution on the resulting
 monoid follows from its additivity on $\QF$
 and the identity~\eqref{Gamma Delta star}.
 \end{proof}
 For the following lemma,
 recall the range observation~\eqref{eqn: range}.

 \begin{lemma}
 \label{lemma: 3 series}
 Let $\Gamma_i \in \QF$ with domain $\Domain_i \op \Hil$
 \tu{(}$i=1,2,3$\tu{)}.
 Then
 \[
 ( \Gamma_1 \comp \Gamma_2 \comp \Gamma_3 )[\xi] =
 ( \Gamma_1 \comp \Gamma_3 )[\xi] +
 \Gamma_2 \big(
 (I + \FD_{\Gamma_1^*}) \xi, (I + \FD_{\Gamma_3}) \xi
 \big)
 \qquad
 (\xi \in \Domain_1 \cap \Domain_2 \cap \Domain_3).
 \]
 \end{lemma}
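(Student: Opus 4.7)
The plan is to expand both sides of the claimed identity into sums of simpler quadratic-form terms and match them component-by-component, exploiting the structural fact that $\Ran\FD_\Gamma\subset\{0\}\op\Hil$ for every $\Gamma\in\QF$. By the common expression for the triple series product derived in the proof of Proposition~\ref{propn: Gamma series semigroup}, together with~\eqref{6.3a}, one has on the common domain
\begin{align*}
(\Gamma_1 \comp \Gamma_2 \comp \Gamma_3)[\xi] - (\Gamma_1 \comp \Gamma_3)[\xi]
= \Gamma_2[\xi] + (\Gamma_1\Delta\Gamma_2)[\xi] + (\Gamma_2\Delta\Gamma_3)[\xi] + (\Gamma_1\Delta\Gamma_2\Delta\Gamma_3)[\xi],
\end{align*}
while sesquilinearity of $\Gamma_2$ gives
\begin{align*}
\Gamma_2\bigl((I+\FD_{\Gamma_1^*})\xi,(I+\FD_{\Gamma_3})\xi\bigr)
= \Gamma_2[\xi] + \Gamma_2(\FD_{\Gamma_1^*}\xi,\xi) + \Gamma_2(\xi,\FD_{\Gamma_3}\xi) + \Gamma_2(\FD_{\Gamma_1^*}\xi,\FD_{\Gamma_3}\xi).
\end{align*}
After cancelling $\Gamma_2[\xi]$, it remains to verify three cross-term identities.

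For the two single-$\Delta$ cross-terms, the key observation is that whenever $\eta\in\{0\}\op\Hil$, a direct computation from the structural form of elements of $\QF$ yields $\Gamma_2(\eta,\xi) = -\ip{\eta}{\FD_{\Gamma_2}\xi}$ and $\Gamma_2(\xi,\eta) = -\ip{\FD_{\Gamma_2^*}\xi}{\eta}$. Applying these with $\eta=\FD_{\Gamma_1^*}\xi$ and $\eta=\FD_{\Gamma_3}\xi$ respectively (valid because $\Ran\FD_{\Gamma_1^*},\Ran\FD_{\Gamma_3}\subset\{0\}\op\Hil$), and comparing with the polarization of~\eqref{F = CF}, one immediately obtains $(\Gamma_1\Delta\Gamma_2)[\xi] = \Gamma_2(\FD_{\Gamma_1^*}\xi,\xi)$ and $(\Gamma_2\Delta\Gamma_3)[\xi] = \Gamma_2(\xi,\FD_{\Gamma_3}\xi)$.

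The triple term is the only genuine piece of bookkeeping, and I expect it to be the main obstacle. Using associativity of $\Delta$ (Lemma~\ref{lemma: Gamma Delta algebra}), apply~\eqref{F = CF} to $(\Gamma_1\Delta\Gamma_2)\Delta\Gamma_3$; then invoke~\eqref{Gamma Delta star} and~\eqref{5.3now} to compute $\FD_{(\Gamma_1\Delta\Gamma_2)^*} = \FD_{\Gamma_2^*\Delta\Gamma_1^*} = \bigl(\{0\}\op(C_2^*-I)\bigr)\FD_{\Gamma_1^*}$, yielding
\begin{align*}
(\Gamma_1\Delta\Gamma_2\Delta\Gamma_3)[\xi]
= -\bigl\langle \bigl(\{0\}\op(C_2^*-I)\bigr)\FD_{\Gamma_1^*}\xi,\; \FD_{\Gamma_3}\xi\bigr\rangle.
\end{align*}
On the other hand, writing $\FD_{\Gamma_1^*}\xi=\binom{0}{\eta_1}$ and $\FD_{\Gamma_3}\xi=\binom{0}{\eta_3}$, direct evaluation of $\Gamma_2$ gives $\Gamma_2(\FD_{\Gamma_1^*}\xi,\FD_{\Gamma_3}\xi) = -\ip{\eta_1}{(C_2-I)\eta_3}$, which coincides with the previous display via the adjoint relation $\ip{(C_2^*-I)\eta_1}{\eta_3} = \ip{\eta_1}{(C_2-I)\eta_3}$. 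The only way this can go wrong is to mishandle the involution in~\eqref{Gamma Delta star} when iterating~\eqref{5.3now}, but once this is done carefully the three displays combine to give the stated identity.
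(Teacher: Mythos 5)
Your proof is correct and follows essentially the same path as the paper's: you subtract $\Gamma_1 \comp \Gamma_3$ to isolate $\Gamma_2 + \Gamma_1\Delta\Gamma_2 + \Gamma_2\Delta\Gamma_3 + \Gamma_1\Delta\Gamma_2\Delta\Gamma_3$, expand $\Gamma_2\bigl((I+\FD_{\Gamma_1^*})\xi,(I+\FD_{\Gamma_3})\xi\bigr)$ by sesquilinearity, and match the four resulting terms using the structural simplification of $\Gamma_2(\cdot,\cdot)$ when one argument lies in $\{0\}\op\Hil$ together with~\eqref{F = CF},~\eqref{Gamma Delta star} and~\eqref{5.3now}. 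The paper cites~\eqref{Gamma Delta xi xi} where you instead spell out the underlying computation (your ``key observation''), but this is the same fact and the argument is otherwise identical.
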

 \begin{proof}
 Let $\xi \in \Domain_1 \cap \Domain_2 \cap \Domain_3$.
 Then, since
 \[
 \Gamma_1 \comp \Gamma_2 \comp \Gamma_3 - \Gamma_1 \comp \Gamma_3 =
 \Gamma_2 +
 \Gamma_2 \Delta \Gamma_3 +
 \Gamma_1 \Delta \Gamma_2 +
 \Gamma_1 \Delta \Gamma_2 \Delta \Gamma_3,
 \]
 the lemma follows by
 several applications of the identities~\eqref{Gamma Delta xi xi} and~\eqref{5.3now}:
 \begin{align*}
 \Gamma_2 &\big(
 (I + \FD_{\Gamma_1^*} ) \xi,
 (I + \FD_{\Gamma_3} ) \xi
 \big)
 \\
 &=
 \Gamma_2[\xi] +
 \Gamma_2 \big( \xi, \FD_{\Gamma_3}  \xi \big) +
 \Gamma_2 \big( \FD_{\Gamma_1^*}  \xi, \xi \big) +
 \Gamma_2 \big( \FD_{\Gamma_1^*}  \xi, \FD_{\Gamma_3} \xi \big)
 \\
 &=
 \Gamma_2[\xi] -
 \ip{\FD_{\Gamma_2^*} \xi}{\FD_{\Gamma_3} \xi} -
 \ip{\FD_{\Gamma_1^*} \xi}{\FD_{\Gamma_2} \xi} -
 \ip{\FD_{\Gamma_1^*} \xi}{\big(\{0\}\op (I-C_2)\big)\FD_{\Gamma_3} \xi}
 \\
 &=
 \Gamma_2 [\xi] +
 \big( \Gamma_2 \Delta \Gamma_3 \big) [\xi]+
 \big( \Gamma_1 \Delta \Gamma_2 \big) [\xi] +
 \big( \Gamma_1 \Delta \Gamma_2 \Delta \Gamma_3 \big) [\xi].
  \end{align*}
 \end{proof}

 \begin{propn}
 \label{propn: quasi pres}
 Let
 $\Gamma_1, \Gamma_2 \in \QF$
 and $\beta_1, \beta_2 \in \Real$,
 and set
 $\Gamma = \Gamma_1 \comp \Gamma_2$  and $\beta = \beta_1 + \beta_2$.
 \begin{alist}
 \item
Suppose that,
for $i=1,2$,
 $$
\Gamma_i^* \comp \Gamma_i \ges 2 \beta_i \Delta^\perp
 \text{ on }  \Dom \Gamma_i.
$$Then
 $\Gamma^* \comp \Gamma \ges 2 \beta \Delta^\perp$
 on $\Dom \Gamma$.
 \item
Suppose that,
for $i=1,2$,
 $$
\Gamma_i^* \comp \Gamma_i = 0
 \text{ on }  \Dom \Gamma_i.
$$
 Then
 $\Gamma^* \comp \Gamma = 0$
 on $\Dom \Gamma$.
 \end{alist}
 \end{propn}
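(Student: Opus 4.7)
The plan is to mirror, at the level of quadratic forms, the corresponding operator identity from Section~\ref{section: quantum Ito algebra}, namely
$F^* \comp X \comp F = F^* \comp F + (I + \Delta F)^* X (I + \Delta F)$. First I would use associativity of $\comp$ on $\QF$ (Proposition~\ref{propn: Gamma series semigroup}) together with $(\Gamma_1 \comp \Gamma_2)^* = \Gamma_2^* \comp \Gamma_1^*$ to rewrite
\[
\Gamma^* \comp \Gamma = \Gamma_2^* \comp (\Gamma_1^* \comp \Gamma_1) \comp \Gamma_2
\]
on the common domain $\Dom \Gamma = (\Domain_1 \cap \Domain_2) \op \Hil$.

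Next I would apply Lemma~\ref{lemma: 3 series} with the substitution $\Gamma_2^*$, $\Gamma_1^* \comp \Gamma_1$, $\Gamma_2$ in place of the triple $\Gamma_1, \Gamma_2, \Gamma_3$. Since $(\Gamma_2^*)^* = \Gamma_2$, the sesquilinear form term becomes $(\Gamma_1^* \comp \Gamma_1)\bigl((I + \FD_{\Gamma_2})\xi,(I + \FD_{\Gamma_2})\xi\bigr)$, which coincides with the quadratic form at $(I + \FD_{\Gamma_2})\xi$. Thus for $\xi \in \Dom \Gamma$,
\[
(\Gamma^* \comp \Gamma)[\xi] = (\Gamma_2^* \comp \Gamma_2)[\xi] + (\Gamma_1^* \comp \Gamma_1)\bigl[(I + \FD_{\Gamma_2})\xi\bigr].
\]
Now I would exploit the range observation~\eqref{eqn: range}: since $\Ran \FD_{\Gamma_2} \subset \Ran \Delta$, one has $\Delta^\perp(I + \FD_{\Gamma_2})\xi = \Delta^\perp \xi$. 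For part (a), applying the hypothesis to each term on the right then yields
\[
(\Gamma^* \comp \Gamma)[\xi] \ges 2\beta_2 \, \Delta^\perp[\xi] + 2\beta_1 \, \Delta^\perp[\xi] = 2\beta \, \Delta^\perp[\xi].
\]
For part (b), the two hypotheses force both terms on the right to vanish, and $\Gamma^* \comp \Gamma = 0$ on $\Dom \Gamma$ follows immediately.

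The only subtle point, and the main place that could go wrong, is domain-matching: to apply the hypothesis for $\Gamma_1$ at the shifted vector $(I + \FD_{\Gamma_2})\xi$, I need this vector to lie in $\Dom(\Gamma_1^* \comp \Gamma_1) = \Domain_1 \op \Hil$. This is where the range observation again saves the day: writing $\xi = \binom{u}{\zeta}$ with $u \in \Domain_1 \cap \Domain_2$, the vector $\FD_{\Gamma_2}\xi$ has zero $\hil$-component, so $(I + \FD_{\Gamma_2})\xi$ has $\hil$-component equal to $u \in \Domain_1$, as required. Likewise, applying Lemma~\ref{lemma: 3 series} itself requires $\xi$ to lie in the intersection of the three domains, which is precisely $\Dom \Gamma$. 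Once these bookkeeping checks are verified, the proof is just the chain of identities described above.
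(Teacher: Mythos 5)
Your proof is correct and follows essentially the same route as the paper: associativity and the $\comp$-involution give $\Gamma^*\comp\Gamma=\Gamma_2^*\comp(\Gamma_1^*\comp\Gamma_1)\comp\Gamma_2$, Lemma~\ref{lemma: 3 series} then yields the identity $(\Gamma^*\comp\Gamma)[\xi]=(\Gamma_2^*\comp\Gamma_2)[\xi]+(\Gamma_1^*\comp\Gamma_1)\bigl[(I+\FD_{\Gamma_2})\xi\bigr]$, and the conclusion follows because $\Delta^\perp\FD_{\Gamma_2}=0$ on $\Dom\Gamma_2$. Your extra bookkeeping about the domain of the shifted vector $(I+\FD_{\Gamma_2})\xi$ is a worthwhile explicit check that the paper leaves implicit, but it is the same argument.
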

 \begin{proof}
 By associativity and Lemma~\ref{lemma: 3 series},
 \[
 ( \Gamma^* \comp \Gamma ) [\xi] =
 ( \Gamma_2^* \comp \Gamma_2 ) [\xi] +
 ( \Gamma_1^* \comp \Gamma_1 ) \big[ (I + \FD_{\Gamma_2}) \xi \big]
 \]
 for all $\xi \in \Dom \Gamma_1 \cap \Dom \Gamma_2$.
The result therefore follows since
$\Delta^\perp \FD_{\Gamma_2} = 0$ on $\Dom \Gamma_2$.
 \end{proof}

\begin{rem}
Thus
\[
\big\{
\Gamma \in \QF : \,
\Gamma^* \comp \Gamma = 0 = \Gamma \comp \Gamma^*
\big\}
\]
forms a subgroup of the group of invertible elements of $( \QF, \comp )$.
\end{rem}

To complete the discussion of the algebra of the
 series product on quadratic forms, here is the form generalisation of Proposition~\ref{propn: wills id}.

 \begin{propn}
 \label{propn: QF id}
 Let $\Gamma \in \QF$.
 Then
 \begin{equation}
 \label{eqn: QF id}
 ( \Gamma^* \comp \Gamma ) [ \xi ]
 =
 ( \Gamma \comp \Gamma^* ) \big[ ( I + F^\Delta_\Gamma ) \xi \big]
 +
 \big\| F^\Delta_{\Gamma^* \comp \Gamma} \xi \big\|^2
 \qquad
 (\xi \in \Dom \Gamma).
 \end{equation}
 Let $\mathcal{V} \in \QF$ be of the form
 $\nu \oplus 0_\Hil$ where $\nu \in \mathcal{Q}( \hil )_\sa$.
 Then
 \[
 \Gamma^* \comp \Gamma \ges \mathcal{V}
 \ \text{ if and only if } \
  \Gamma \comp   \Gamma^* \ges \mathcal{V}.
 \]
 \end{propn}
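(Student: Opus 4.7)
The plan is to mirror the proof of Proposition~\ref{propn: wills id} (Wills' identity) in the quadratic form setting, using the associativity of $\comp$ on $\QF$ (Proposition~\ref{propn: Gamma series semigroup}) together with the triple-product expansion in Lemma~\ref{lemma: 3 series}. Set $\Theta := \Gamma^* \comp \Gamma$, which is selfadjoint by Proposition~\ref{propn: Gamma series semigroup}, so $F^\Delta_{\Theta^*} = F^\Delta_{\Theta}$. First I would compute $\Theta \comp \Theta$ directly from~\eqref{6.3a} and~\eqref{F = CF}:
\[
(\Theta \comp \Theta)[\xi] = 2\,\Theta[\xi] - \ip{F^\Delta_{\Theta}\xi}{F^\Delta_{\Theta}\xi} = 2\,(\Gamma^* \comp \Gamma)[\xi] - \big\|F^\Delta_{\Gamma^*\comp\Gamma}\,\xi\big\|^2.
\]

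Next I would apply Lemma~\ref{lemma: 3 series} with $\Gamma_1 = \Gamma^*$, $\Gamma_2 = \Gamma \comp \Gamma^*$, $\Gamma_3 = \Gamma$; since $\Gamma_1^* = \Gamma = \Gamma_3$, the two ``bracket insertions'' $(I + F^\Delta_{\Gamma_1^*})$ and $(I + F^\Delta_{\Gamma_3})$ coincide, giving
\[
\big(\Gamma^* \comp (\Gamma \comp \Gamma^*) \comp \Gamma\big)[\xi] = (\Gamma^* \comp \Gamma)[\xi] + (\Gamma \comp \Gamma^*)\big[(I + F^\Delta_\Gamma)\xi\big].
\]
By associativity, the left-hand side equals $(\Theta \comp \Theta)[\xi]$; equating with the previous display and rearranging produces identity~\eqref{eqn: QF id}. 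One has to check that $(I + F^\Delta_\Gamma)\xi \in \Dom(\Gamma \comp \Gamma^*) = \Domain \op \Hil$, which follows immediately from $\Ran F^\Delta_\Gamma \subset \{0\} \op \Hil$ via~\eqref{eqn: range}.

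For the comparison statement, I would observe that if $\mathcal{V} = \nu \op 0_\Hil$ then, since $F^\Delta_\Gamma$ lands in $\{0\} \op \Hil$, one has the key vanishing
\[
\mathcal{V}\big[(I + F^\Delta_\Gamma)\xi\big] = \nu\big[P_\hil \xi\big] = \mathcal{V}[\xi] \qquad (\xi \in \Dom \Gamma),
\]
where $P_\hil$ denotes the projection onto $\hil$. Assuming $\Gamma \comp \Gamma^* \ges \mathcal{V}$ on $\Dom \Gamma^*$, identity~\eqref{eqn: QF id} combined with the nonnegativity of the square-norm term gives
\[
(\Gamma^* \comp \Gamma)[\xi] \ges (\Gamma \comp \Gamma^*)\big[(I + F^\Delta_\Gamma)\xi\big] \ges \mathcal{V}\big[(I + F^\Delta_\Gamma)\xi\big] = \mathcal{V}[\xi].
\]
The converse implication follows by replacing $\Gamma$ with $\Gamma^*$, since $(\Gamma^*)^* = \Gamma$ and $\mathcal{V}$ is unchanged.

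The main obstacle is bookkeeping rather than real difficulty: I must confirm that the triple composition $\Gamma^* \comp (\Gamma \comp \Gamma^*) \comp \Gamma$ has common domain $\Domain \op \Hil$ so that Lemma~\ref{lemma: 3 series} applies verbatim, and that the argument $(I + F^\Delta_\Gamma)\xi$ lies in the correct domain for $\Gamma \comp \Gamma^*$. Both are immediate from $\Ran F^\Delta_\Gamma \subset \{0\} \op \Hil$ and $\Dom \Gamma = \Dom \Gamma^* = \Domain \op \Hil$, so the proof reduces essentially to the one-line algebraic manipulation above.
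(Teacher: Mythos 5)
Your proof is correct and follows essentially the same route as the paper's: both compute $(\Gamma^*\comp\Gamma)\comp(\Gamma^*\comp\Gamma)$ in two ways — once via~\eqref{6.3a} and~\eqref{F = CF} using the selfadjointness of $\Gamma^*\comp\Gamma$, and once via Lemma~\ref{lemma: 3 series} with $(\Gamma_1,\Gamma_2,\Gamma_3)=(\Gamma^*,\Gamma\comp\Gamma^*,\Gamma)$ — and then use the vanishing $\mathcal{V}\big[(I+F^\Delta_\Gamma)\xi\big]=\mathcal{V}[\xi]$ together with nonnegativity of the square-norm term for the inequality.
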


 \begin{proof}
Let $\xi =  \binom{u}{\zeta} \in \Dom \Gamma$.
Note that $\Gamma$, $\Gamma^*$,
$\Gamma^* \comp \Gamma$,
$\Gamma \comp \Gamma^*$
and
$\Gamma^* \comp \Gamma \comp \Gamma^* \comp \Gamma$
all share the same domain.
On the one hand,
setting $\Gamma_1 = \Gamma_2 = \Gamma^* \comp \Gamma$ in~\eqref{6.3a} and~\eqref{F = CF}
yields
\[
\big( \Gamma^* \comp \Gamma \comp \Gamma^* \comp \Gamma \big) [ \xi ]
-
\big( \Gamma^* \comp \Gamma \big) [ \xi ]
=
\big( \Gamma^* \comp \Gamma \big) [ \xi ]
-
\big\| F^\Delta_{\Gamma^* \comp \Gamma} \xi \big\|^2.
\]
On the other hand,
setting
$\Gamma_1 = \Gamma^*$, $\Gamma_2 = \Gamma \comp \Gamma^*$ and $\Gamma_3 = \Gamma$ in
Lemma~\ref{lemma: 3 series}
yields
\[
\big( \Gamma^* \comp \Gamma \comp \Gamma^* \comp \Gamma \big) [ \xi ]
-
\big( \Gamma^* \comp \Gamma \big) [ \xi ]
=
\big( \Gamma \comp \Gamma^* \big) \big[ ( I + F^\Delta_\Gamma ) \xi \big].
\]
Thus~\eqref{eqn: QF id} holds.

Now suppose that $\Gamma \comp \Gamma^* \ges \mathcal{V}$.
Then, since
\[
\mathcal{V} \big[ ( I + F^\Delta_\Gamma ) \xi \big] = \nu[u] = \mathcal{V}[\xi],
\]
\eqref{eqn: QF id} implies that
$( \Gamma^* \comp \Gamma ) [ \xi ] \ges \mathcal{V}[\xi]$.
Thus $\Gamma^* \comp \Gamma \ges \mathcal{V}$.
The converse implication follows by exchanging $\Gamma$ and $\Gamma^*$.
 \end{proof}

 Now we return to the Hilbert spaces $\init$ and $\noise$.
 Let $\Xhol(\init)$ denote the class of
 quadratic forms $\gamma$ on $\init$ which are
 \emph{closed}, \emph{densely defined} and satisfy the
\emph{accretive} and \emph{semisectorial} conditions
 \begin{align*}
 &\re \gamma + \beta \ges 0
\ \text{ and } \
\big|\! \im \gamma[u] \big| \les
 \alpha \big( \re \gamma[u] + \norm{u}^2 \big)
 \qquad
 (u \in \Dom \gamma)
 \end{align*}
 for some $\beta \in \Real$ and $\alpha \in \Rplus$,
 and let $\Shol(\init)$ denote the class of holomorphic
 semigroups they generate (see \emph{e.g.}~\cite{Ouhabaz}).

 In~\cite{LS holomorphic},  a quasicontractive QS cocycle
 $V$ on $\init$ is called \emph{holomorphic}
 if its expectation semigroup belongs to $\Shol(\init)$.
 Denoting this class of QS cocycle by $\QSChol(\init,\noise)$,
 it is shown there that
 the correspondence
 $\Xhol(\init) \to \Shol(\init)$
 extends to a bijection
 \[
 \Xfourhol(\init,\noise) \to \QSChol(\init,\noise),
 \quad
 \Gamma \mapsto X^\Gamma,
 \]
 in which
 $\Xfourhol(\init,\noise)$ denotes the subclass of
 $\mathcal{Q}(\khat\ot\init) = \mathcal{Q}( \init \op ( \noise\ot\init) )$
 consisting of forms $\Gamma \sim \components$ such that
 $\gamma \in \Xhol(\init)$ and $\Gamma^* \comp \Gamma + 2 \beta \Delta^\perp \ges 0$,
 for some $\beta \in \Real$.
 We speak of the \emph{stochastic form generator} of the holomorphic cocycle.
If $\Gamma \sim \components \in \Xfourhol(\init,\noise)$ then
it follows from Proposition~\ref{propn: QF id} that  $\Gamma^* \in \Xfourhol(\init,\noise)$
and in~\cite{LS holomorphic}
it is also shown that
 $C \in B(\noise\ot\init)$ is a contraction,
 and
 $X^{\Gamma^*} = ( X^{\Gamma} )^{\dual}$, the dual QS cocycle
 defined in~\eqref{eqn: dual}.

 The bijection extends the above form-semigroup correspondence as
 follows:
 if $\Gamma \sim (\gamma, 0, 0, I)$
 where
  $\gamma \in \Xhol(\init)$
  then
  $\Gamma \in \Xfourhol(\init,\noise)$
  and
  $X^\Gamma = (P_t \ot I_\Fock)_{t\ges 0}$
 where $P$ is the holomorphic semigroup with form generator $\gamma$.
 It also extends that of Theorem~\ref{thm: cocycle = qsde},
 in the sense that
if
$F \in \Qqclg$
with block matrix form
$
\left[
\begin{smallmatrix} K & M \\ L & C-I
\end{smallmatrix}
\right]
 $
then
 $X^F = X^\Gamma$
for the form in $\Xfourhol(\init,\noise)$ given by
$$
\Gamma \sim (q_{-K}, L, M^*, C).
$$

 If $\Gamma_1, \Gamma_2 \in \Xfourhol(\init,\noise)$
 then $\Gamma_1 \comp \Gamma_2 \in \Xfourhol(\init,\noise)$
 provided only that
 $\Dom \Gamma_1 \cap \Dom \Gamma_2$ is dense.
 This neatly extends the fact that
 if $\gamma_1, \gamma_2 \in \Xhol(\init)$
 then $\gamma_1 + \gamma_2 \in \Xhol(\init)$
  provided only that
 $\Dom \gamma_1 \cap \Dom \gamma_2$ is dense in $\init$.

 \begin{conjecture}
 Let
 $V^1$ and $V^2$ be quasicontractive holomorphic QS cocycles on $\init$
with noise dimension space $\noise$
and respective stochastic form generators $\Gamma_1$ and $\Gamma_2$,
and suppose that
 $\Dom \Gamma_1 \cap \Dom \Gamma_2$ is dense in $\khat \ot \init$.
 Then
\tu{(}in the notation of Theorem~\ref{thm: the theorem}\tu{)},
for all $T \in \Rplus$, $\varphi \in B(\Fock)_*$ and $u \in \init$,
\[
 \sup_{0 \les r \les t \les T}
 \big\|
 \big( \id_{B(\init)} \otol \varphi \big)
 \big( V^{ \mathcal{P}, 1 \comp 2 }_{r,t} - X^{\Gamma_1 \comp \Gamma_2}_{r,t} \big)
 \, u
 \big\|
 \to 0
 \ \text{ as } \
  \big|\mathcal{P} \cap [0,T] \big| \to 0.
\]
Moreover, if the QS cocycle  $X^{\Gamma_1 \comp \Gamma_2}$ is isometric
and $V^1$ and $V^2$ are contractive
then,
for all $T \in \Rplus$ and $\xi \in \init \ot \Fock$,
\[
 \sup_{0 \les r \les t \les T}
 \big\|
  \big( V^{ \mathcal{P}, 1 \comp 2 }_{r,t} - X^{\Gamma_1 \comp \Gamma_2}_{r,t} \big)
 \, \xi
 \big\|
 \to 0
 \ \text{ as } \
 \big|\mathcal{P} \cap [0,T] \big| \to 0.
\]
 \end{conjecture}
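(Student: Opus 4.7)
The plan is to reduce the conjecture to Theorem~\ref{thm: the theorem} by a double approximation: approximate the stochastic form generators $\Gamma_1, \Gamma_2$ by sequences of elementary stochastic generators $F_1^{(n)}, F_2^{(n)} \in \Qqclg$, invoke the elementary Lie--Trotter formula for each $n$, and pass to the limit by means of a Trotter--Kato-type stability result for holomorphic QS cocycles, of the sort exploited for the orthogonal-noise case in~\cite{LS T-K}.

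For the construction, write $\Gamma_i \sim \componentsi$ and let $A_i$ denote the $m$-sectorial operator associated with $\gamma_i$ via the representation theorem. Using a Yosida-type approximation of $A_i$ combined with spectral truncation of $L_i$ and $\wt{L}_i$, one obtains a sequence of bounded stochastic generators $F_i^{(n)} \in \Qqclg$ with exponential growth bounds uniform in $n$, for which standard holomorphic semigroup theory yields $X^{F_i^{(n)}}_t u \to X^{\Gamma_i}_t u$ and $X^{F_1^{(n)} \comp F_2^{(n)}}_t u \to X^{\Gamma_1 \comp \Gamma_2}_t u$ as $n \to \infty$, for every $u \in \init$ and uniformly in $t \in [0, T]$; the latter convergence uses the density of $\Dom \Gamma_1 \cap \Dom \Gamma_2$, which, as noted in the paragraph preceding the conjecture, ensures $\Gamma_1 \comp \Gamma_2 \in \Xfourhol(\init, \noise)$.

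Applying Theorem~\ref{thm: the theorem} for each $n$ then gives, for every $\varphi \in B(\Fock)_*$,
\[
\sup_{0 \les r \les t \les T} \big\| ( \id_{B(\init)} \otol \varphi ) \big( V^{\mathcal{P}, 1 \comp 2, (n)}_{r,t} - X^{F_1^{(n)} \comp F_2^{(n)}}_{r,t} \big) \big\| \to 0 \ \text{ as } \ \big| \mathcal{P} \cap [0, T] \big| \to 0,
\]
where $V^{\mathcal{P}, 1 \comp 2, (n)}$ denotes the Trotter product formed from $X^{F_1^{(n)}}$ and $X^{F_2^{(n)}}$. The conjecture would then follow, at least formally, by interchanging the limits $|\mathcal{P} \cap [0,T]| \to 0$ and $n \to \infty$. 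The strong-convergence refinement, under the isometry hypothesis on $X^{\Gamma_1 \comp \Gamma_2}$ and contractivity of $V^1, V^2$, follows from the uniform continuity of $(r, t) \mapsto X^{\Gamma_1 \comp \Gamma_2}_{r,t} \xi$ on $[0,T]^2_{\les}$, exactly as in the second remark following Theorem~\ref{thm: the theorem}.

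The hard part will be justifying the interchange of these two limits. The partition estimate supplied by Lemma~\ref{lemma: wgfN} depends on $C(g', F_1^{(n)}, F_2^{(n)}, g)^2$, and the norms of the truncated entries of $F_i^{(n)}$ typically diverge as $n \to \infty$; consequently the rate of convergence in $|\mathcal{P} \cap [0,T]|$ degrades uncontrollably with $n$, and a naive diagonal argument fails. Surmounting this is likely to require a refined partition estimate exploiting holomorphic smoothing --- each unbounded factor being absorbed by a regularising power of the step length drawn from an estimate of the form $\| (\re A_i)^\alpha X^{\Gamma_i}_t \| \les C_\alpha t^{-\alpha}$. Making this precise would presumably entail extending the quantum It\^o product formula~\eqref{eqn: QI} to the form setting developed in Section~\ref{section: holomorphic cocycles}, applied on the dense domain $\Dom \Gamma_1 \cap \Dom \Gamma_2$, and combining it with sectorial-operator estimates; this is the essential technical content that presumably keeps the statement at the level of a conjecture.
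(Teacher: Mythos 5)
The statement you were asked to prove is explicitly labelled a \emph{Conjecture} in the paper; no proof is given, and the remark that follows it records only three special cases in which the claim is known (bounded generators, via Theorem~\ref{thm: the theorem}; the pure-semigroup case, which reduces to Kato--Simon; and independent driving noises, via~\cite{LS T-K}). There is therefore no argument in the paper against which your proposal can be measured.

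Read on its own terms, your proposal is not a proof but an honest programme, and you have correctly diagnosed the point at which it breaks down. The approximation $F_i^{(n)} \to \Gamma_i$ by bounded generators destroys exactly the quantity on which Theorem~\ref{thm: the theorem} and Lemma~\ref{lemma: wgfN} rest: the constant $C(g', F_1^{(n)}, F_2^{(n)}, g)^2$ appearing in the partition estimate diverges as $n\to\infty$ whenever the form components are genuinely unbounded, so the Trotter error bound for fixed $n$ gives no uniformity and the two limits cannot be interchanged without new input. Your proposed remedy --- holomorphic smoothing estimates of the form $\|(\re A_i)^\alpha X^{\Gamma_i}_t\| \les C_\alpha t^{-\alpha}$, absorbing divergent norms into regularising powers of the step length, together with an extension of the quantum It\^o product formula~\eqref{eqn: QI} to the form domain $\Dom\Gamma_1 \cap \Dom\Gamma_2$ --- is a plausible direction and consistent with how~\cite{LS T-K} handles the orthogonal-noise case, but it is precisely the missing technical content rather than something available to be invoked. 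In short, you have accurately identified why the statement remains a conjecture, but you have not proved it, and your own final paragraph acknowledges as much.
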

 \begin{rems}
 The conjecture has three special cases where it is proven.
 Theorem~\ref{thm: the theorem} covers the case where
 $\Gamma_1$ and  $\Gamma_2$ are bounded.
 In the semigroup case,
 where $\Gamma_i \sim (\gamma_i, 0, 0, I)$
 for $\gamma_i \in \Xhol(\init)$ ($i=1,2$),
 it reduces to a version of a celebrated result of Kato --
 as extended by Simon (\cite{Kato T}).
 For the case of independent driving noises
 a version of
 the holomorphic counterpart to
 Corollary~\ref{cor: LS L-T},
  which includes the Kato--Simon theorem,
 is proved in~\cite{LS T-K}.
 \end{rems}

  \par\bigskip\noindent
  {\bf Acknowledgements.}
I am grateful to Mateusz Jurczy\'nski and Micha\l\ Gnacik for useful comments on an earlier draft of the paper.
 Support from the UK-India Education and Research Initiative
 (UKIERI),
 under the research collaboration grant
 \emph{Quantum Probability, Noncommutative Geometry \& Quantum
 Information},
 is also gratefully acknowledged.



 \end{document}